\newtheorem{theorem}[subsection]{Theorem}
\newtheorem{lemma}[subsection]{Lemma}
\newtheorem{corollary}[subsection]{Corollary}
\theoremstyle{definition}
\newtheorem{definition}[subsection]{Definition}
\theoremstyle{remark}
\newtheorem{remark}[subsection]{Remark}
\def\R{\mathbb{R}}
\title{{\bf Newton flows for elliptic functions II}\\
{\bf {\small Structural stability:
Classification \& Representation}}}
\author{G.F. Helminck,\\
Korteweg-de Vries Institute\\
University of Amsterdam\\
P.O. Box 94248\\
1090 GE Amsterdam\\
The Netherlands\\
e-mail: g.f.helminck@uva.nl\\ 
F. Twilt,\\
Department of Applied Mathematics\\
University of Twente\\
P.O. Box 217, 7500 AE Enschede\\
The Netherlands\\
e-mail: f.twilt@kpnmail.nl\\
}
\begin{document}
\maketitle

\begin{abstract}
\noindent
In our previous paper we associated to each non-constant elliptic function $f$ on a torus $T$ a dynamical system, the elliptic Newton flow corresponding to $f$. We characterized the functions for which these flows are structurally stable and 
showed a genericity result. In the present paper we focus on the classification and representation of these structurally stable flows.

The phase portrait of a structurally stable elliptic Newton flow generates a connected, cellularly embedded, graph $\mathcal{G}(f)$ on a torus $T$ with $r$ vertices, 2$r$ edges and $r$ faces that fulfil certain combinatorial properties ({\it Euler, Hall}) on some of its subgraphs. The graph $\mathcal{G}(f)$ determines the conjugacy class of the flow. [{\it classification}]

A connected, cellularly embedded toroidal graph $\mathcal{G}$ with the above {\it Euler} and {\it Hall} properties, is called a {\it Newton graph}. Any Newton graph $\mathcal{G}$ can be realized as the graph $\mathcal{G}(f)$ of the structurally stable Newton flow for some function $f$. 
  
This leads to: up till conjugacy between flows and (topological) equivalency between graphs, there is a 
one to one correspondence between the structurally stable Newton flows and Newton graphs, both with respect to the same order $r$ of the underlying functions $f$.[{\it representation}]
      
Finally, we clarify the analogy between rational and elliptic Newton flows, and show
that the detection of elliptic Newton flows is possible in polynomial time.
      
The proofs of the above results rely on Peixoto's characterization/classification theorems for structurally stable dynamical systems on compact 2-dimensional manifolds, Stiemke's theorem of the alternatives, Hall's theorem of distinct representatives, the Heffter-Edmonds-Ringer rotation principle for embedded graphs, an existence theorem on gradient dynamical systems by Smale, and an interpretation of Newton flows as steady streams. 
\end{abstract}

\noindent
{\bf Subject classification:} 
05C45, 05C75, 30C15, 30D30, 30F99,  33E05, 34D30, 37C15, 37C20, 37C70, 49M15, 68Q25.\\

\noindent
{\bf Keywords:} Dynamical system (gradient-), desingularized Newton flow (rational, elliptic), structural stability, elliptic function (Jacobian, Weierstrass), phase portrait, Newton graph (elliptic-, 
rational-), cellularly embedded toroidal (distinguished) graph, face traversal procedure, steady stream, complexity, Angle property, Euler property, Hall condition.

\section{Elliptic Newton flows: a recapitulation}
\label{ENFll.1}

In order to clarify the context of the present paper, we recapitulate some earlier results.\\

\noindent
{\large{\bf {\small 1.1 Elliptic Newton flows on the plane and on a torus}}}\\

Let $f$  be an elliptic (i.e., meromorphic, doubly periodic) function of order $r(\geqslant 2)$ on the complex plane $\mathbb{C}$ with $(\omega_{1},\; \omega_{2})$, ${\rm Im}\frac{\omega_{2}}{\omega_{1}} >0,$ as basic periods spanning a lattice $\Lambda(=\Lambda_{\omega_{1},\; \omega_{2}})$.

The {\it planar elliptic Newton flow} $\overline{\mathcal{N} }(f)$ is a 
 $C^{1}$-vector field on $\mathbb{C}$, defined as a {\it desingularized version}\footnote{\label{FTN1}
 In fact, we consider the system $  \dfrac{dz}{dt} =-(1+|f(z)|^{4})^{-1}|f'(z)|^{2}\dfrac{f (z)}{f^{'} (z)}$: a continuous version of Newton's damped iteration method for finding zeros for $f$.
} of the planar dynamical system, $\mathcal{N}(f)$, given by: (cf. \cite{HT1})
\begin{equation}
\label{vgl2x}
  \dfrac{dz}{dt} = \dfrac{-f (z)}{f^{'} (z)}, z \in \mathbb{C}.
\end{equation}

On a  non-singular, oriented $\overline{\mathcal{N} }(f)$-trajectory $z(t)$ we have: (cf. \cite{HT1})\\
\noindent
- arg $(f)=$constant and $|f(z(t))|$ is a strictly decreasing function on $t$.\\
\noindent
So that an $\overline{\mathcal{N} }(f)$-{\it equilibrium} is:
\ \\
\noindent
-  Attractor, or repellor, or saddle; see the Comment on Fig.1, where $N(f)$, $P(f)$ and
    $C(f))$ stand for resp. the set of zeros, poles and critical points for $f$.\\
    
\begin{figure} [htbp]
\begin{center}
\includegraphics*[height=4cm, width=14cm]{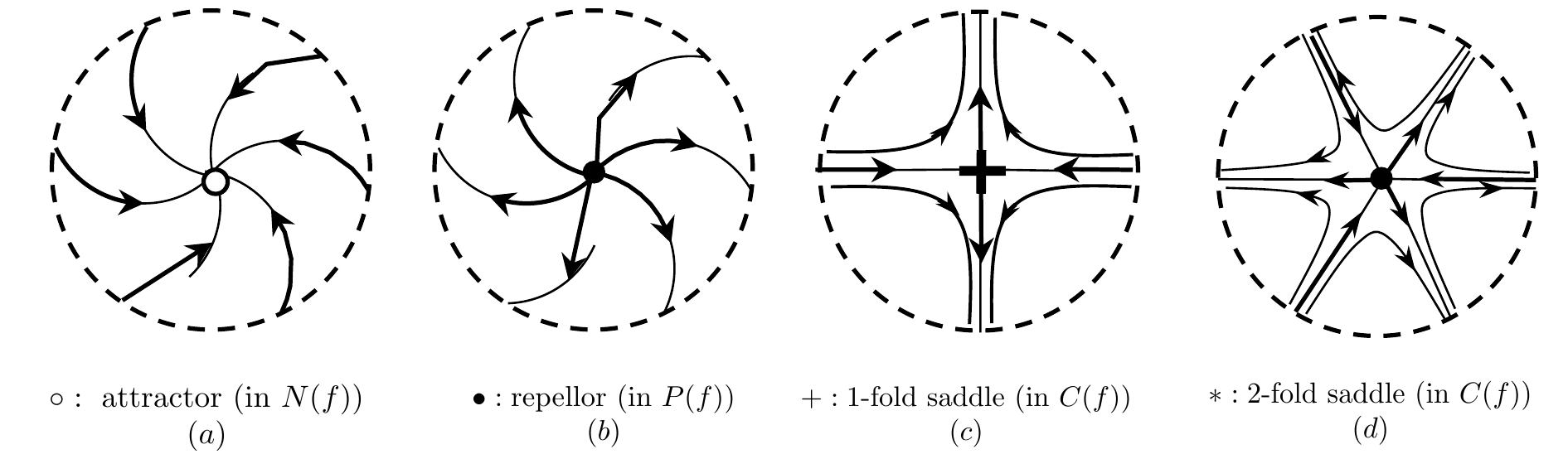}
\caption {Local phase portraits around equilibria of $\overline{\mathcal{N} }(f)$ }
\label{Figure1}
\end{center}
\end{figure}

\noindent
\underline{Comment on Fig. \ref{Figure1}}:\\
\noindent
Fig. \ref{Figure1}-$(a),(b)$:
In a $k$-fold zero (pole) for $f$ the flow $\overline{\mathcal{N} }(f)$ exhibits a {\it stable (unstable) star node}
and  each (principal) value of arg$f$ appears precisely $k$ times on equally distributed incoming (outgoing) trajectories. Moreover,  two different incoming (outgoing) trajectories intersect under a {\it non vanishing} angle $\!\frac{\Delta}{k}$, where $\Delta$ stands for the difference of the arg$f$-values on these trajectories. \\
\noindent
Fig.\ref{Figure1}-$(c),(d)$: 
In case of a $k$-fold critical point (i.e. a $k$-fold zero for $f'$, no zero for $f$) the flow $\overline{\mathcal{N} }(f)$ exhibits a $k$-fold saddle, the stable (unstable) separatrices being equally distributed around this point. The two unstable (stable) separatrices at a 1-fold saddle, see Fig.\ref{Figure1}(c), constitute the ``local'' {\it unstable (stable) manifold} at this saddle point.\\

Functions of the type $f$ correspond to the meromorphic functions on the complex torus $T(\Lambda)$($=\mathbb{C} / \Lambda_{\omega_{1},\; \omega_{2}}$).
So, we can interprete $\overline{\mathcal{N} }(f)$ as a global  $C^{1}$-vector field, denoted\footnote{\label{FTN2}Occasionally, we will refer to $\overline{\overline{\mathcal{N}} }(f)$ as to a {\it toroidal Newton flow}.} $\overline{\overline{\mathcal{N}} }(f)$, on the Riemann surface $T(\Lambda)$  and it is allowed to apply results for  $C^{1}$-vector fields on compact differential manifolds, such as certain theorems of Poincar\'e-Bendixon-Schwartz on limiting sets and those of Baggis-Peixoto on $C^{1}$-structural stability. In particular, the local phase portraits around $\overline{\overline{\mathcal{N}} }(f)$-equilibriae are as in Fig.\ref{Figure1}. \\

\noindent
{\large{\bf {\small 1.2 The canonical form for a toroidal Newton flow; the topology $\tau_{0}$}}}\\

It is well-known that the function $f$ has precisely $r$ zeros and $r $ poles (counted by multiplicity) on the half open\! / \!half closed period parallelogram $P(=P_{\omega_{1},\omega_{2}})$ given by $\{t_{1}\omega_{1} + t_{2}\omega_{2} \mid 0 \leqslant t_{1}<1, \;0 \leqslant t_{2}<1\}$.

Denoting these zeros and poles by
$a_{1}, \! \cdots \!\!,a_{r} $, resp. $b_{1}, \! \cdots \! ,b_{r} $, 
we have: (cf. \cite{HT1}),
\cite{M2})
\begin{equation}
\label{Vgl2}
a_{i} \neq b_{j},i, j=1, \! \cdots \! \!,r \text{ and }a_{1}+  \cdots +a_{r} =b_{1} + \cdots  +b_{r} \text{ mod }\Lambda.
\end{equation}
and thus
\begin{equation}
\label{Vgl3}
[a_{i}] \neq [b_{j}],i, j=1, \! \cdots \! ,r \text{ and }[a_{1}]+ \! \cdots \! +[a_{r}] =[b_{1}] +\! \cdots \! +[b_{r}],
\end{equation}
where $[a_{1}], \! \cdots \!, [a_{r}] $ and  $[b_{1}], \! \cdots \!, [b_{r}] $ are the zeros resp. poles for $f$ on $T(\Lambda)$ and $[\cdot]$ stands for the congruency class $\mathrm {mod} \, \Lambda$
of a number in $\mathbb{C}$.

\begin{theorem}$($The canonical form for toroidal Newton flows$)$\\
\label{T1.1}
\begin{itemize}
\item Given a flow $\overline{\overline{\mathcal{N}} }(f)$ on $T(\Lambda)$, there exists an elliptic function $f^{*}$ of order $r$ with period lattice $\Lambda^{*}(=\Lambda_{1, i})$
together with a homeomorphism $T(\Lambda) \to T(\Lambda^{*})$
mapping the phase portraits of $\overline{\overline{\mathcal{N}} }(f)$ and $\overline{\overline{\mathcal{N}} }(f^{*})$ onto each other, thereby respecting the orientations of the trajectories.
\item Moreover:
If $a^{*}_{1}, \! \cdots \!\!,a^{*}_{r} $, resp. $b^{*}_{1}, \! \cdots \! ,b^{*}_{r} $ are the zeros and poles of $f^{*}$ in $P^{*}(=P_{1,i})$, then 
\begin{equation*}
f^{*}(z)=\frac{\sigma(z - a^{*}_1)\! \cdots \! \sigma (z - a^{*}_r)} {\sigma(z - b^{*}_1) \! \cdots \! \sigma(z - b^{*}_{r-1})  \sigma (z - (b^{*}_{r})^{'})}, 
(b^{*}_{r})^{'} = a^{*}_{1}+ \! \cdots \! +a^{*}_{r} - b^{*}_{1}- \! \cdots \! -b^{*}_{r-1},
\end{equation*}
where $\sigma$ stands for the Weierstrass' sigma function w.r.t. the lattice $\Lambda^{*}$. \\
\item
Conversely:
 If $c_{1}, \! \cdots \!\!,c_{r} $, resp. $d_{1}, \! \cdots \! ,d_{r} $
 stands for any pair of $r$ tuples in $P^{*}$ that fulfil the relations (\ref{Vgl2}), then due to the
  basic properties of the quasi periodic function $\sigma$,  a function of the form
  \begin{equation*}
 \frac{\sigma(z - c_1)\! \cdots \! \sigma (z - c_r)} {\sigma(z - d_1) \! \cdots \! \sigma(z - d_{r-1})  \sigma (z - d_{r}^{'})}, \text{ with }d_{r}^{'} = c_{1}+ \! \cdots \! +c_{r} - d_{1}- \! \cdots \! -d_{r-1},
\end{equation*}
is elliptic w.r.t. to $\Lambda^{*}$  
with $[c_{1}], \cdots ,[c_{r}]$ resp. $[d_{1}], \cdots ,[d_{r}]$ as zeros, poles on $T(\Lambda^{*})$.
\end{itemize}
\end{theorem}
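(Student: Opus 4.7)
The plan is to dispose of the three bullets in the opposite order to the one displayed: the converse first, then the construction of $f^*$ built on top of it, and finally the dynamical statement about the homeomorphism of phase portraits. For the converse I would apply the classical quasi-periodicity $\sigma(z+\omega)=\chi(\omega)\exp(\eta(\omega)(z+\tfrac12\omega))\sigma(z)$ of the Weierstrass sigma function to each of the $r$ numerator and $r$ denominator factors of the displayed quotient. The signs contribute $\chi(\omega)^{r}/\chi(\omega)^{r}=1$, and the exponential prefactors collapse to $\exp\bigl(\eta(\omega)(d_{1}+\cdots+d_{r-1}+d_{r}'-c_{1}-\cdots-c_{r})\bigr)$, which equals $1$ by the defining identity $d_{r}'=c_{1}+\cdots+c_{r}-d_{1}-\cdots-d_{r-1}$. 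Hence the quotient is $\Lambda^{*}$-periodic, and its divisor on $T(\Lambda^{*})$ is read off from the simple zeros and poles of $\sigma$: the zero/pole classes are precisely $[c_{j}]$ and $[d_{j}]$, noting that $[d_{r}']=[d_{r}]$ by (\ref{Vgl2}).

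For the construction of $f^{*}$, let $L:\C\to\C$ be the unique $\R$-linear isomorphism with $L(\omega_{1})=1$, $L(\omega_{2})=i$, so $L(\Lambda)=\Lambda^{*}$ and $L$ descends to a homeomorphism $\bar L:T(\Lambda)\to T(\Lambda^{*})$. Setting $a_{j}^{*}=L(a_{j})$, $b_{j}^{*}=L(b_{j})$ and using $\R$-linearity of $L$ together with (\ref{Vgl2}), the data $(a_{j}^{*},b_{j}^{*})$ satisfy the corresponding sum condition modulo $\Lambda^{*}$; the converse just established then manufactures $f^{*}$ as the displayed sigma quotient, elliptic of order $r$ with respect to $\Lambda^{*}$.

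The final and subtlest step concerns the phase portraits. Under $\bar L$, each equilibrium of $\overline{\overline{\mathcal N}}(f)$ maps to an equilibrium of $\overline{\overline{\mathcal N}}(f^{*})$ of matching local type (matching multiplicity of zero, pole, or critical point), see Fig.~\ref{Figure1}. Along non-equilibrium trajectories $|f|$ (resp.\ $|f^{*}|$) is strictly decreasing, so neither flow has closed orbits or non-trivial recurrence; both are gradient-like on their respective tori. Peixoto's classification theorem for $C^{1}$ vector fields on compact orientable surfaces then reduces orientation-preserving topological equivalence to matching of the combinatorial separatrix skeleton. The cyclic arrangement of $\arg f$-rays at each star node and saddle is determined by the local multiplicity alone (Fig.~\ref{Figure1}), hence is intrinsic to the zero/pole/critical-point data and is preserved under $\bar L$. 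The main obstacle lies precisely here: $\bar L$ itself does \emph{not} intertwine the two analytically defined flows, since $L$ is not conformal, and the critical points of $f$ and of $f^{*}$ need not correspond pointwise under $L$. What remains to carry out is a local modification of $\bar L$ inside small disks around the saddles, turning it into a homeomorphism that sends separatrix skeleton onto separatrix skeleton; Peixoto then upgrades this to the required orientation-preserving topological equivalence of the full phase portraits.
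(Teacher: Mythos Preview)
This theorem is recapitulated from \cite{HT1} and carries no proof in the present paper, so there is no in-text argument to compare against. Your treatment of the second and third bullets---the $\sigma$-quotient construction and its ellipticity via the quasi-periodicity identity---is correct and standard.

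The first bullet, however, has a gap more serious than the local patching you flag. Two points. First, Peixoto's classification pertains to \emph{structurally stable} flows, whereas Theorem~\ref{T1.1} is asserted for arbitrary $f$; if $f$ has multiple zeros or poles, higher-order critical points, or saddle connections, then $\overline{\overline{\mathcal N}}(f)$ is not structurally stable and the separatrix-skeleton invariant does not determine its conjugacy class. Second---and this undermines even the non-degenerate case---the $f^{*}$ you manufacture shares with $f$ only the $\bar L$-transported zero/pole divisor. Its critical set is determined by the $\sigma$-quotient on $\Lambda^{*}$ and bears no a~priori relation to $L(C(f))$: $f^{*}$ need not be non-degenerate even when $f$ is, and the two saddle configurations may differ in number, multiplicity, and connection pattern. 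Your proposed ``local modification of $\bar L$ near the saddles'' presupposes a type-preserving bijection of saddles that has not been, and in general cannot be, established from divisor data alone. What does go through cleanly is the conformal rescaling $z\mapsto z/\omega_{1}$, which reduces $\Lambda$ to $\Lambda_{1,\tau}$ and genuinely conjugates the Newton flows (the Newton field is covariant under complex dilation, $\mathcal N(f(\alpha\,\cdot\,))=\alpha^{-1}\mathcal N(f)$); the non-conformal passage $\tau\rightsquigarrow i$ is precisely the step requiring the separate argument of \cite{HT1}, and it cannot be reduced to divisor-matching followed by Peixoto.
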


 Now, it is not difficult to see that the elliptic functions of order $r$, and also the underlying toroidal Newton flows, can be represented by the set of all ordered pairs 
 $$
 (\{ [c_{1}],\! \cdots \! ,[c_{r} ]\}, \, \{ [d_{1}], \! \cdots \! ,[d_{r} ]\})
$$ 
of congruency classes $\mathrm {mod} \, \Lambda^{*}$ with $c_{i}, d_{i} \in P^{*}, i=1, \dots, r,$ that fulfil 
(\ref{Vgl3}). This representation space can be endowed with a topology, say $\tau_{0}$, that
is induced by the Euclidean topology on $\mathbb{C}$, and is natural in the following sense: (cf. \cite{HT1},\cite{HTtotaal})\\
\noindent
Given an elliptic function $f $ of order $r$
and $\varepsilon >0$ sufficiently small, a $\tau_{0}$-neighbourhood $\mathcal{O}$ of $f$ exists such that for any $g \in \mathcal{O}$ , the zeros (poles) for $g$ are contained in $\varepsilon$-neighbourhoods of the zeros (poles) for $f$.\\

\noindent
{\large{\bf {\small 1.3 Structural stability}}}\\

Let $E_{r}(\Lambda)$ be the set of all elliptic functions $f$ of order $r$ on the torus $T(\Lambda)=\mathbb{C} / \Lambda$ and $N_{r}(\Lambda)$ the set of all toroidal Newton flows $\overline{\overline{\mathcal{N}} }(f)$. We assume (no loss of generality; see the above Subsection 1.2) that  $\Lambda=\Lambda_{1, i}$
,  and write :  $E_{r}(\Lambda)=E_{r}$,  $T(\Lambda)=T$ and $N_{r}(\Lambda)=N_{r}$. \\

By $X(T)$ we mean the set of all $C^{1}$-vector fields on $T$, endowed with the $C^{1}$-topology. 
The topology $\tau_{0}$ on $E_{r}$
and the $C^{1}$-topology  on $X(T)$ are matched by: 
(cf. \cite{HT1})
\begin{lemma}
\label{L1.1}
The map $E_{r} \rightarrow X(T): f \mapsto \overline{\overline{\mathcal{N}}} (f)$
is $\tau_{0}\!-\!C^{1}$ continuous.
\end{lemma}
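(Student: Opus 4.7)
The plan is to factor the map $f \mapsto \overline{\overline{\mathcal{N}}}(f)$ through the finite-dimensional parameter space of zeros and poles supplied by Theorem \ref{T1.1}, and then to verify that the resulting vector field varies $C^1$-smoothly with these parameters. Taking $\Lambda = \Lambda^*$, Theorem \ref{T1.1} identifies each $f \in E_r$ (up to a nonzero multiplicative constant, which in any case drops out of $f/f'$) with a point
\[
p = (a_1,\ldots,a_r;\, b_1,\ldots,b_{r-1}) \in \Omega \subset (P^*)^{2r-1},
\]
where $\Omega$ is the open locus cut out by the distinctness conditions of (\ref{Vgl2}), and $b_r$ is forced by the closing relation. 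By construction, $\tau_0$ is the Euclidean topology on $\Omega$, so it suffices to prove continuity of $p \mapsto V_p := \overline{\overline{\mathcal{N}}}(f_p)$ from $\Omega$ into $X(T)$.

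Writing out the definition from footnote \ref{FTN1}, one has $V_p(z) = -\overline{f_p'(z)}\, f_p(z)/(1+|f_p(z)|^4)$. By the canonical formula, $f_p$ is a ratio of products of $\sigma(z-a_i)$ and $\sigma(z-b_j)$, so (using $\sigma' = \sigma\zeta$) both $f_p$ and $f_p'$, together with their derivatives in $z$ and $\bar z$, depend jointly smoothly on $(z,p)$ away from the zeros and poles of $f_p$. Hence on any compact subset of $T$ that avoids the singularities of $f_{p_0}$, both $V_p$ and its first derivatives are jointly continuous in $(z,p)$, and therefore uniformly $C^1$-close to those of $V_{p_0}$ when $p$ is close to $p_0$.

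The main obstacle is the local behaviour of $V_p$ near the zeros and poles, where $f_p$, $f_p'$, and $(1+|f_p|^4)^{-1}$ are individually singular. Near a zero $a_{i,0}$ of $f_{p_0}$ of multiplicity $m$, I group the relevant $\sigma$-factors and write $f_p(z) = (z-a_i)^m\, G(z,p)$ on a small disk about $a_{i,0}$, with $G$ holomorphic in $z$, non-vanishing, and $C^\infty$ in $(z,p)$ for $p$ near $p_0$. Setting $w = z-a_i$, direct cancellation of the apparent singularity yields
\[
V_p(z) = -\,\frac{m\, w\, |w|^{2(m-1)}\, |G|^2 \;+\; |w|^{2m}\, G\, \overline{\partial_z G}}{1+|w|^{4m}|G|^4},
\]
which is $C^\infty$ in $(z,p)$ because $|w|^{2k} = (w\bar w)^k$ is real-analytic and the denominator is bounded away from $0$. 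A symmetric expansion $f_p(z) = (z-b_j)^{-n}H(z,p)$ handles the poles. The possible coalescence of simple zeros into a multiple zero as $p$ varies is absorbed into the sigma-product formula and does not affect regularity. Covering the compact torus $T$ by finitely many charts adapted to the three local regimes (regular, zero, pole) and invoking uniform continuity on the resulting compact sets, one obtains the desired $\tau_0$-$C^1$ continuity of $p \mapsto V_p$.
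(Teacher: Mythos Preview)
Your argument is essentially correct, and there is nothing in the present paper to compare it against: Lemma~\ref{L1.1} sits in the recapitulation Section~\ref{ENFll.1} and is simply cited from \cite{HT1}, with no proof given here. Your route via the explicit desingularized formula $V_p=-\overline{f_p'}\,f_p/(1+|f_p|^4)$ together with the $\sigma$-product parametrization is the natural one.

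One point deserves tightening. Near a zero of $f_{p_0}$ of multiplicity $m$ you write $f_p(z)=(z-a_i)^m\,G(z,p)$ with $G$ smooth and non-vanishing in $(z,p)$. But the $m$ parameters $a_{i_1},\ldots,a_{i_m}$ that coalesce at $p_0$ will in general \emph{separate} for nearby $p$, so this factorization fails as stated; your remark about ``coalescence'' has the direction of the difficulty reversed. The fix is immediate: use $f_p(z)=P(z,p)\,G(z,p)$ with $P(z,p)=\prod_{k=1}^{m}(z-a_{i_k})$, obtained from $\sigma(w)=w\,\tilde\sigma(w)$ with $\tilde\sigma(0)=1$. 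Since $P$ and $\partial_z P$ are polynomial in $z$ and in the $a_{i_k}$, both $P\,\overline{\partial_z P}$ and $|P|^2$ are real-analytic in $(z,\bar z,p,\bar p)$, and your computation gives
\[
V_p \;=\; -\,\frac{P\,\overline{\partial_z P}\,|G|^2 \;+\; |P|^2\,G\,\overline{\partial_z G}}{1+|P|^4\,|G|^4},
\]
which is jointly $C^\infty$ in $(z,p)$ on a neighbourhood of the entire zero cluster. The pole case follows either by the analogous product $Q(z,p)=\prod(z-b_{j_\ell})$ or, more cleanly, by the duality $\overline{\overline{\mathcal{N}}}(1/f)=-\overline{\overline{\mathcal{N}}}(f)$ (Lemma~\ref{T1.3}(a)), which reduces it to the zero case already handled. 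With this correction in place, compactness of $T$ then yields the uniform $C^1$-closeness exactly as you argue.
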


Two flows $\overline{\overline{\mathcal{N}} }(f)$ and $\overline{\overline{\mathcal{N}} }(f)$) in $N_{r}$ are called {\it conjugate}, denoted $\overline{\overline{\mathcal{N}}} (f) \sim \overline{\overline{\mathcal{N}}} (g)$, if there is a homeomorphism from $T$ onto itself mapping maximal trajectories of $\overline{\overline{\mathcal{N}} }(f)$ onto those of $\overline{\overline{\mathcal{N}} }(g)$, thereby respecting the orientations of these trajectories.

We call the flow $\overline{\overline{\mathcal{N}}} (f) $ 
{\it $\tau_{0}$-structurally stable}, if there is a $\tau_{0}$-neighborhood $\mathcal{O}$ of $f$, such that for all $g \in \mathcal{O}
$ we have: $\overline{\overline{\mathcal{N}}} (f) \sim \overline{\overline{\mathcal{N}}} (g)$. 
The set of all structurally stable Newton flows $\overline{\overline{\mathcal{N}}} (f)$
 is denoted by $\tilde{N}_{r}$.

By Lemma \ref{L1.1} it  follows: $C^{1}$-structural stability for $\overline{\overline{\mathcal{N}}} (f)$ implies $\tau_{0}$-structural stability for $\overline{\overline{\mathcal{N}}} (f)$. So, when discussing structural stable toroidal Newton flows we will skip the adjectives $\tau_{0}$  and $C^{1}$\\.  

We proved: (cf. \cite{HT1})

\begin{theorem} $($Characterization and Genericity of structural stability$)$
\label{T1.2} 
\begin{enumerate}
\item[$($1$)$]  
$\overline{\overline{\mathcal{N}}} (f) \in \tilde{N}_{r}$
 if and only if the function $f$  is {\it non-degenerate}, i.e., all zeros, poles and critical
      points for $f$ are simple, and no critical points for $f$ are connected by $\overline{\overline{\mathcal{N}}} (f)$-trajectories.
\item[$($2$)$] 
The set of all non-degenerate functions of order $r$ is open and dense in $E_{r}$.
\end{enumerate}
\end{theorem}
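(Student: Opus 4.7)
The plan is to derive Theorem \ref{T1.2} from Peixoto's structural stability theorem for $C^1$ vector fields on compact orientable 2-manifolds, together with the monotonicity of $|f|$ along $\overline{\overline{\mathcal{N}}}(f)$-trajectories and the $\tau_0$--$C^1$ continuity from Lemma \ref{L1.1}.

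For part (1), I would first recall Peixoto's theorem: a $C^1$ vector field on a compact orientable 2-manifold (here $T$) is structurally stable if and only if (a) all equilibria are hyperbolic, (b) all closed orbits are hyperbolic, (c) there are no trajectories connecting saddles, and (d) the $\alpha$- and $\omega$-limit sets consist only of equilibria and closed orbits. I then translate each condition into properties of $f$. Using the local forms of Fig.\ \ref{Figure1}: a zero (pole) of $f$ is a simple zero (pole) precisely when the corresponding star node is a 1-fold attractor (repellor), and these are automatically hyperbolic in the desingularized flow; similarly, a critical point of $f$ is simple iff the saddle is a 1-fold (hence hyperbolic) saddle. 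Condition (d) is automatic because along every non-equilibrium trajectory $|f|$ is strictly decreasing, hence no trajectory can accumulate on anything other than an equilibrium; the same monotonicity rules out closed orbits, so (b) is vacuous. Condition (c) is exactly the last clause in the definition of non-degeneracy (no saddle-to-saddle trajectory). This gives the equivalence in (1).

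For part (2), openness follows from the $\tau_0$-continuity of zeros, poles and critical points together with Lemma \ref{L1.1}: simple zeros/poles of $f$ remain simple for nearby $g \in E_r$, the critical set of $g$ stays close to that of $f$ (so critical points remain simple), and the absence of saddle connections is a $C^1$-open condition on the vector field by Peixoto. For density I would perturb in three stages. First, by a small translation of the divisor (keeping (\ref{Vgl2})) one makes all zeros and poles distinct and simple; this uses that in the representation from Theorem \ref{T1.1} the data $(c_1,\dots,c_r,d_1,\dots,d_{r-1})$ can be moved freely in an open dense subset of $(P^{*})^{2r-1}$ while $d_r'$ is determined. Second, one argues that for generic divisor data the Weierstrass product has only simple critical points, e.g.\ by noting that the condition ``$f'$ has a multiple zero at some point where $f\neq 0$'' cuts out a proper real-analytic subvariety of the parameter space. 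Third, non-degenerate saddle connections can be destroyed by an arbitrarily small perturbation of the divisor, since a saddle connection is a codimension-one phenomenon that breaks under transverse perturbation of the vector field (standard Kupka--Smale style argument, applied here in the finite-dimensional family $E_r$ via Lemma \ref{L1.1}).

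The main obstacle is the density step, and within it the elimination of saddle connections: one must show that the finite-dimensional family parametrising $E_r$ is rich enough to break any saddle connection. I would handle this by exhibiting, for a given saddle pair $(c_i,c_j)$ with a connecting trajectory, an explicit one-parameter deformation of the divisor (e.g.\ sliding a single zero $a_k$ along a short arc, compensated via the $d_r'$ constraint) whose effect on the transverse displacement of the corresponding stable and unstable separatrices has nonzero derivative at the unperturbed parameter; this derivative can be computed from the variation formula for separatrices as an integral along the connecting orbit of an inner product involving $\partial_\lambda(-f_\lambda/f'_\lambda)$, which is non-trivial because the $\sigma$-function expression in Theorem \ref{T1.1} depends non-trivially on each $a_k$. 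Combining all three perturbation steps and using that countably many such saddle connections can occur yields density, and together with openness completes (2).
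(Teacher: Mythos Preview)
This theorem is not proved in the present paper: it appears in Section~\ref{ENFll.1} as a recapitulation, with the proof explicitly deferred to the companion paper \cite{HT1} (``We proved: (cf.~\cite{HT1})''). Hence there is no proof here to compare against. That said, your outline is consistent with the methodology advertised in the abstract (Peixoto's characterization theorem on compact 2-manifolds), and the translation of Peixoto's conditions into ``simple zeros/poles/critical points, no saddle connections'' via the local pictures of Fig.~\ref{Figure1} and the strict monotonicity of $|f|$ along trajectories is the natural route.

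One point deserves more care. The notion of structural stability in Theorem~\ref{T1.2} is $\tau_0$-structural stability, whereas Peixoto's theorem gives $C^1$-structural stability. Lemma~\ref{L1.1} only yields the implication $C^1$-stable $\Rightarrow$ $\tau_0$-stable, which handles the ``if'' direction of (1). For the ``only if'' direction you must show that a \emph{degenerate} $f$ admits a $\tau_0$-close $g\in E_r$ with $\overline{\overline{\mathcal N}}(g)\not\sim\overline{\overline{\mathcal N}}(f)$; Peixoto alone does not give this, because the destabilizing $C^1$-perturbation need not lie in the finite-dimensional family $E_r$. In practice one argues directly: a multiple zero/pole/critical point can be split by a small $\tau_0$-move of the divisor, changing the number or index of equilibria (a conjugacy invariant); a saddle connection is broken by the one-parameter divisor deformation you describe. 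Your density argument in (2) is along the right lines, but the saddle-connection step is indeed the crux: you should make explicit why the finite-dimensional family $E_r$ (parametrized as in Theorem~\ref{T1.1}) is transverse to the codimension-one saddle-connection locus, rather than merely asserting that the Melnikov-type integral is nonzero. This is presumably the content of the detailed argument in \cite{HT1}.
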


\begin{figure}[h!]
\begin{center}
\includegraphics[scale=0.45]{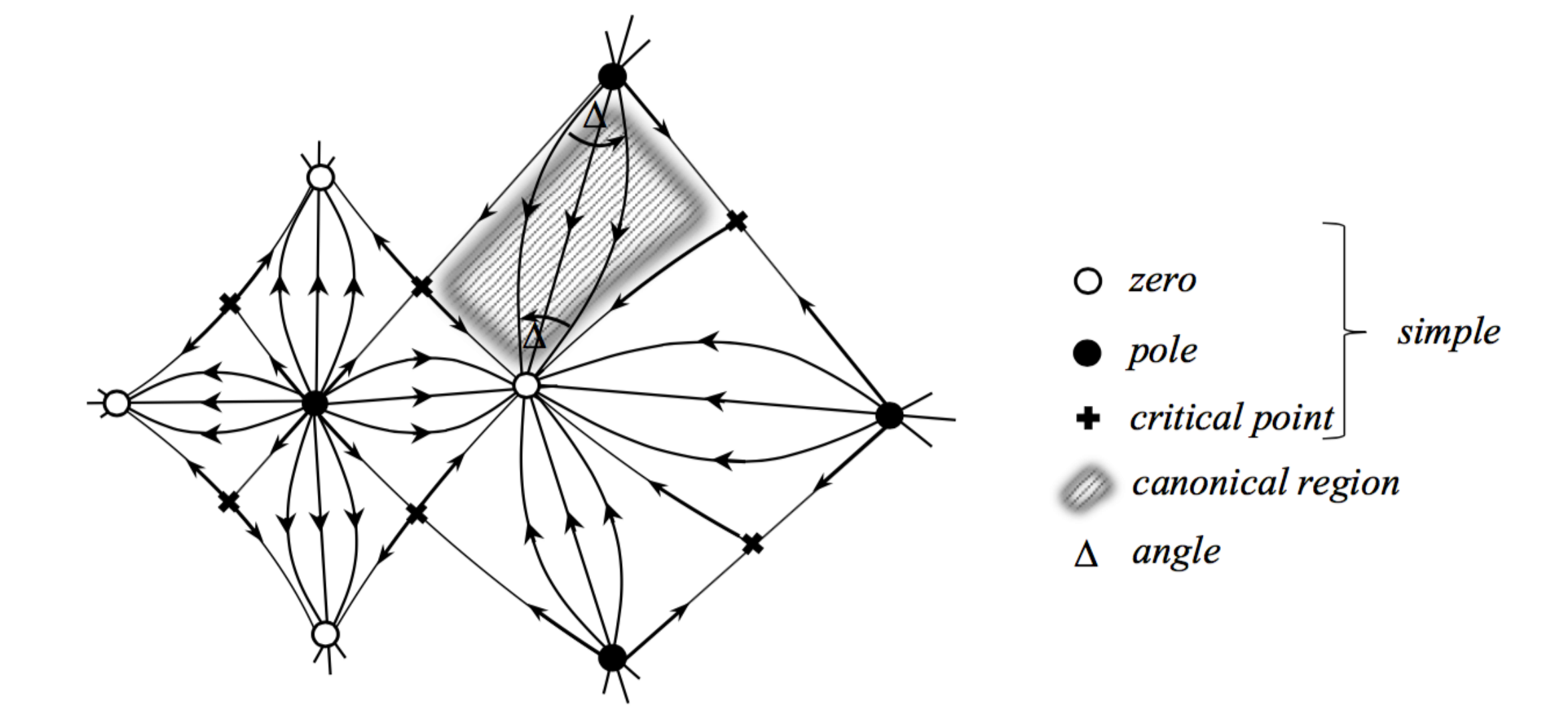}
\caption{\label{Figure2N} Basin of repulsion(attraction) in the phase portrait of $\overline{\overline{\mathcal{N}}} (f)$ for a pole(zero) of $f$.}
\end{center}
\end{figure}

\newpage
We list some properties that will play a role in the sequel, see 
Comment on Fig.\ref{Figure1}:
\begin{lemma}$($Properties of structurally stable toroidal Newton flows $\overline{\overline{\mathcal{N}}} (f)$$)$
\label{T1.3}
\begin{enumerate}
\item[$(a)$] If $\overline{\overline{\mathcal{N}}} (f)$ is structurally stable, then also $\overline{\overline{\mathcal{N}}} (\frac{1}{f})$, and  $\overline{\overline{\mathcal{N}}} (\frac{1}{f})=-\overline{\overline{\mathcal{N}}} (f)$. $[${\bf Duality}$]$
\item[$(b)$] 
There are precisely $2r$ orthogonal saddles for $\overline{\overline{\mathcal{N}}} (f)$.
\item[$(c)$] The boundary of the basin of a 
repellor(attractor) is made up by the unstable (stable) manifolds at the saddles situated in this boundary. $($cf. Fig.\ref{Figure2N}$)$
\end{enumerate}
\end{lemma}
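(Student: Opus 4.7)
For (a), the plan is to read the duality directly off the explicit desingularized formula in footnote~\ref{FTN1}. Substituting $g=1/f$ into $-(1+|g|^4)^{-1}|g'|^2\,g/g'$, one checks that the three factors rearrange so that the damping scalar of $g$ matches that of $f$ exactly, while the rational factor $g/g'$ equals $-f/f'$; the composite effect is a single sign flip, giving $\overline{\overline{\mathcal{N}}}(1/f)=-\overline{\overline{\mathcal{N}}}(f)$ on $T$. That $\overline{\overline{\mathcal{N}}}(1/f)$ is itself structurally stable follows immediately from Theorem~\ref{T1.2}(1): $f$ and $1/f$ share the same (simple) critical points and the same critical orbits (only the time orientation is reversed), while their zero and pole sets are merely swapped; so non-degeneracy is manifestly symmetric in $f$ and $1/f$.

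For (b), I would first count by an elliptic-function argument and then linearize. Because $f$ has $r$ simple poles on $T$, the derivative $f'$ (also elliptic with the same lattice) acquires a double pole at each, hence has order $2r$, and therefore exactly $2r$ zeros counted with multiplicity. Non-degeneracy forces these zeros to be simple and disjoint from the zeros of $f$, so they are precisely the $2r$ saddles. For the orthogonality claim at a simple critical point $c$, the strategy is to substitute $w=z-c$, expand $f(z)=f(c)+\tfrac12 f''(c)w^2+O(w^3)$, and observe that $-f/f'$ is, to leading order, $\alpha/w$ with $\alpha=-f(c)/f''(c)\neq 0$. Passing to polar coordinates $w=re^{i\theta}$ and $\alpha=|\alpha|e^{i\varphi}$, the radial-orbit condition becomes $\sin(\varphi-2\theta)=0$, which gives four separatrix directions differing by $\pi/2$; two are stable, two unstable. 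The positive scalar in footnote~\ref{FTN1} that desingularizes the flow does not affect these directions, producing the orthogonal cross of Fig.~\ref{Figure1}(c).

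For (c), I would invoke the Peixoto-type structure recalled in Subsection~1.1. Let $p$ be a repellor with basin $B$; then $B$ is open and invariant, so $\partial B$ is compact, closed and invariant. By Theorem~\ref{T1.2}(1) combined with the Baggis--Peixoto theorem, the flow has no closed orbits and every $\alpha$- and $\omega$-limit set is a single equilibrium. For an orbit $\gamma\subset\partial B$, the $\alpha$-limit cannot be $p$ (else $\gamma\subset B$) nor any other repellor (whose basin is a disjoint open set missing $\gamma$); it must therefore be a saddle lying on $\partial B$, so $\gamma$ belongs to that saddle's unstable manifold. Conversely, any unstable separatrix emanating from a saddle on $\partial B$ is trapped in $\partial B$ by invariance. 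This identifies $\partial B$ with the union of the unstable manifolds of the saddles situated in it; the attractor case is obtained by applying~(a) to the dual flow $\overline{\overline{\mathcal{N}}}(1/f)$.

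The main obstacle lies in~(c): the genuine work is to exclude irregular recurrence, closed orbits, and more exotic limit sets for orbits on $\partial B$, so that the $\alpha$-limit of every such orbit is forced to be a single saddle. This is exactly the point at which the non-degeneracy hypothesis and the Baggis--Peixoto classification of structurally stable flows on compact surfaces must be brought in; parts~(a) and~(b) are in comparison computational and largely follow from the explicit form of $\overline{\overline{\mathcal{N}}}(f)$.
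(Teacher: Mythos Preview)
The paper does not actually prove this lemma. Section~\ref{ENFll.1} is an explicit recapitulation of results from the companion paper \cite{HT1}; Lemma~\ref{T1.3} is stated there without proof, preceded only by ``We list some properties that will play a role in the sequel, see Comment on Fig.\ref{Figure1}''. So there is no in-paper proof to compare against, and your proposal is effectively a reconstruction of what the argument in \cite{HT1} should be.

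As a reconstruction, your argument is sound. Part~(a) is exactly the intended computation: with $g=1/f$ one has $g/g'=-f/f'$ and $(1+|g|^{4})^{-1}|g'|^{2}=(1+|f|^{4})^{-1}|f'|^{2}$, so the desingularized field of footnote~\ref{FTN1} flips sign; the non-degeneracy criterion of Theorem~\ref{T1.2}(1) is then manifestly symmetric under $f\leftrightarrow 1/f$. Part~(b) is the standard elliptic count (order of $f'$ is $2r$) together with the local normal form at a simple critical point; your separatrix-direction computation via $\sin(\varphi-2\theta)=0$ is correct and matches the ``orthogonal saddle'' description in the Comment on Fig.~\ref{Figure1}.

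In part~(c) your forward inclusion is fine, but the sentence ``any unstable separatrix emanating from a saddle on $\partial B$ is trapped in $\partial B$ by invariance'' is not justified by invariance alone: $\sigma\in\partial B$ is a fixed point, so invariance of $\partial B$ says nothing a priori about its unstable separatrices. The correct local argument is that near $\sigma$ the basin $B$, being open, invariant and accumulating on $\sigma$, must fill the two hyperbolic sectors adjacent to the stable separatrix issued from $p$; both unstable separatrices bound these sectors and hence lie in $\overline{B}\setminus B=\partial B$. This is a small patch, not a structural gap, and the paper's own later use of the lemma (Corollary~\ref{C6.3}, Lemma~\ref{L6.7N}) confirms this picture.
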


\noindent
As an illustration we present in Fig.\ref{Figure3N} and \ref{Figure4N}
planar/toroidal Newton flows for Jacobian functions ${\rm sn}_{\omega_{1}, \omega_{2}}$ with only simple attractors, repellors and saddles; see also \cite{A/S}, \cite{HT1}
and the forthcoming Remark \ref{R6.10}.
For more examples of (structurally stable) Newton flows, see \cite{HT3}.

\begin{figure}[h!]
\centering
\includegraphics[width=5.7in]{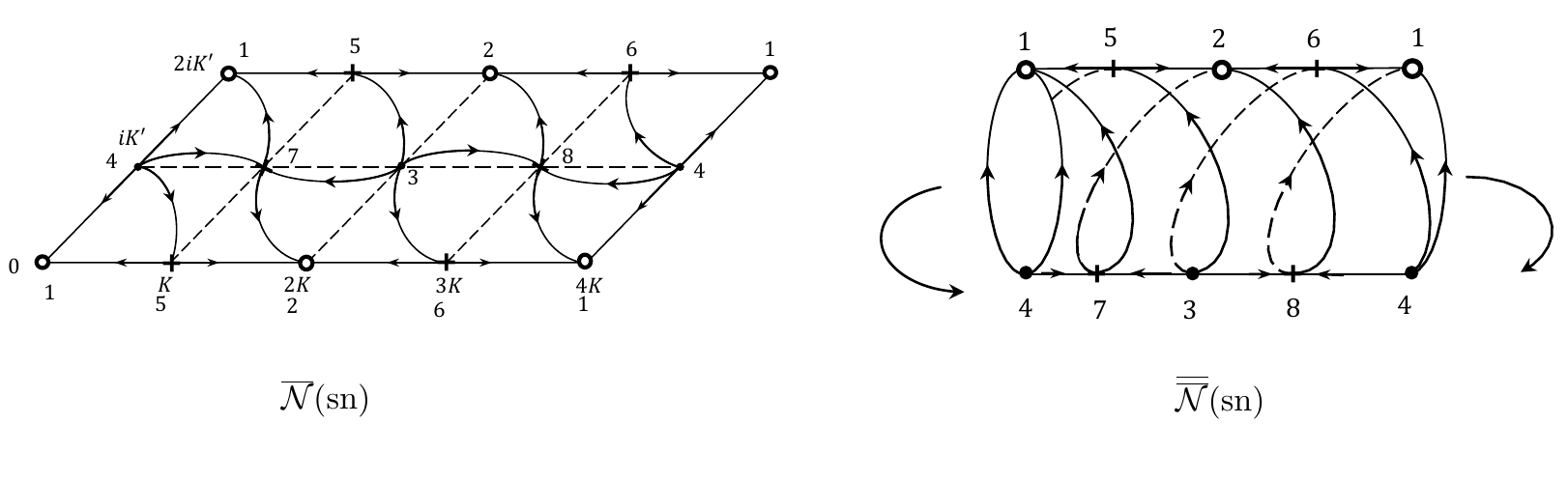}
\caption{ Planar and toroidal Newton flows for ${\rm sn}_{\omega_{1},\omega_{2}}$; structurally stable.}
\label{Figure3N}
\end{figure}
\begin{figure}[h!]
\centering
\includegraphics[width=2in]{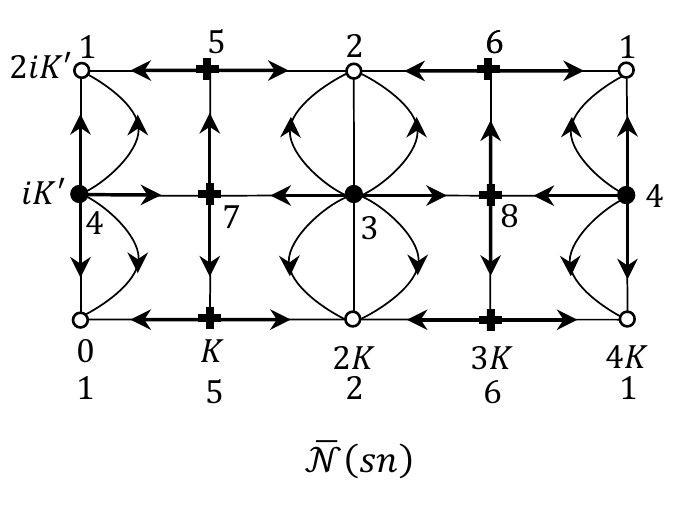}
\caption{Planar Newton flow for ${\rm sn}_{\omega_{1},\omega_{2}}$; not structurally stable.}
\label{Figure4N}
\end{figure}

\newpage
\noindent
{\large{\bf {\small 1.4 Toroidal vs. rational Newton flows; purpose of the paper}}}\\

If we choose for $f$ {\it rational} functions (meromorphic on the Riemann sphere $S^{2}$), we obtain the class of so-called {\it spherical Newton flows}. These flows have many concepts/features in common with (the class of) toroidal Newton flows and are already studied before (cf. \cite{JJT1},\cite{JJT2},\cite{JJT3},\cite{JJT4})
; in fact, {\it characterization} \& {\it genericity results}, 
analogous with Theorem \ref{T1.2}, have been proved. Moreover, spherical Newton flows can be {\it classified} \& {\it represented} in terms of certain sphere graphs (i.e. the ``principal parts'' of the phase portraits of structurally stable spherical Newton flows). The target of the present paper is to prove such a
{\it classification} \& {\it representation result} for toroidal Newton flows.

\section{Structurally\,stable\,elliptic\,Newton\,flows:\,Classification}
\label{sec6}

In this section, let $f$ 
be non-degenerate 
of order $r$, thus $\overline{\overline{\mathcal{N}}} (f)$ is structurally stable.

\noindent
Now, the following definition makes sense: (cf. Subsection 1.1 and Lemma 
\ref{T1.3})

\begin{definition}
\label{D6.1}
The 
graph $\mathcal{G}(f )$, $f \in \tilde{E}_{r}$, on the torus $T$ is given by:
\begin{itemize}
\item 
Vertices
are the $r$ zeros for $f $ on $T$
(as attractors for $\overline{\overline{\mathcal{N}} }(f)$).
\item 
Edges 
are the 2$r$ unstable manifolds at the critical points for $f$ on $T$ as  
$\overline{\overline{\mathcal{N}} }(f)$-saddles. 
\end{itemize}
\end{definition}

\noindent
Note that the faces of $\mathcal{G}(f )$ are precisely the $r$ {\it basins of repulsion} of the poles, say
$[b_{j}]$, $j=1, \! \cdots \!, r$ for $f$
 on $T$ (as repellors for $\overline{\overline{\mathcal{N}} }(f)$) and
will be denoted by $F_{b_{j}}(f)$; 
their boundaries by $\partial F_{b_{j}}(f)$.
These boundaries, consisting of {\it unstable manifolds} at saddles
for $\overline{\overline{\mathcal{N}} }(f)$,  are 
subgraphs of $\mathcal{G}(f )$, see Fig.\ref{Figure2N}\\

\noindent 
Analogously, we define the graph\footnote{\label{vtn1s6}$ \mathcal{G}(f )$ and $\mathcal{G}^{*}(f )$ are {\it geometrical duals}; see also Section \ref{Nsec7}.}, say $\mathcal{G}^{*}(f )$, on the poles and the stable $\overline{\overline{\mathcal{N}} }(f)$-manifolds at the critical points for $f$
 on $T$.
\begin{lemma}
\label{L6.2N}
Both $\mathcal{G}(f )$ and $\mathcal{G}^{*}(f )$
are multigraphs\footnote{ i.e., multiple edges are allowed, but no loops (cf. \cite{Har}); 
note however that the concept of multigraph in (\cite{MoTh}) includes loops.}
embedded in $T$.
\end{lemma}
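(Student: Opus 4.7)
The plan is to verify three things about $\mathcal{G}(f)$: (i) each edge is a well-defined simple arc terminating at two attractors, (ii) distinct edges meet only at vertices, so $\mathcal{G}(f)$ sits in $T$ without crossings, and (iii) no edge is a loop. Then $\mathcal{G}^{*}(f)$ will be handled by invoking duality.

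For (i), I would use that non-degeneracy makes every saddle $1$-fold, so its unstable manifold is the disjoint union of the saddle point and two unstable separatrices. Each separatrix is a maximal trajectory on the compact manifold $T$, so its $\omega$-limit set is non-empty. Because $|f(z(t))|$ is strictly decreasing along non-singular trajectories (Subsection 1.1), the $\omega$-limit cannot contain a pole (where $|f|=\infty$) nor admit a periodic orbit; and by Theorem \ref{T1.2}(1) there are no saddle connections, so the limit is not another saddle. What remains is that each separatrix converges to a single attractor, i.e.\ a zero of $f$. This gives each edge precisely two endpoints in the vertex set.

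For (ii), the interior of an edge consists of regular points of $\overline{\overline{\mathcal{N}}}(f)$, and two such interiors belong to distinct maximal trajectories of the $C^{1}$-field. Uniqueness of integral curves forbids any crossing away from equilibria. The only equilibria lying on $\mathcal{G}(f)$ are the saddles (in the interior of their own edge) and the attractors (the vertices); repellors lie inside faces. Hence distinct edges meet at most in common attractor-vertices, which is exactly the required embedding property.

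Step (iii) is the key point, and the only place where analytic information beyond generic Morse--Smale structure is used. Along any unstable separatrix emerging from a saddle $c$, $\arg f$ is constant, and by continuity of $f$ at $c$ (where $f(c)\neq 0$ since critical points are neither zeros nor poles) this constant value is $\arg f(c)$. Thus the \emph{two} unstable separatrices of $c$ carry the \emph{same} value of $\arg f$. Suppose both separatrices terminated at the same attractor $a$, a simple zero of $f$. The star-node description in the Comment on Fig.\ref{Figure1}$(a)$ (with $k=1$) says that each value of $\arg f$ is realized on exactly one incoming trajectory at $a$. This forces the two separatrices to coincide, contradicting their distinctness at $c$. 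Consequently the two endpoints of every edge are distinct, so $\mathcal{G}(f)$ has no loops (multiple edges between the same pair of attractors are not excluded, which is why ``multigraph'' appears in the statement).

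Finally, for $\mathcal{G}^{*}(f)$ I would appeal to the duality part of Lemma \ref{T1.3}$(a)$: $\overline{\overline{\mathcal{N}}}(1/f)=-\overline{\overline{\mathcal{N}}}(f)$ is again structurally stable, with poles and zeros interchanged and stable manifolds of $\overline{\overline{\mathcal{N}}}(f)$ becoming unstable manifolds of $\overline{\overline{\mathcal{N}}}(1/f)$. Hence $\mathcal{G}^{*}(f)=\mathcal{G}(1/f)$, and the previous argument applies verbatim. The main obstacle is step (iii); steps (i) and (ii) are largely qualitative consequences of Morse--Smale behaviour on a torus, whereas the loop-free property genuinely rests on the $\arg f$-preservation of Newton trajectories together with the simplicity of the zeros.
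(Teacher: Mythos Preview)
Your proposal is correct and follows essentially the same route as the paper. The paper's proof is much terser: it addresses only the loop-free property (your step~(iii)) via exactly the same $\arg f$ argument---two unstable separatrices at a saddle carry the common value $\arg f(c)$, and at a \emph{simple} zero each principal value of $\arg f$ is attained by a unique incoming trajectory (Comment on Fig.~\ref{Figure1})---and then handles $\mathcal{G}^{*}(f)$ by interchanging zeros and poles, which is your duality step. Your additional points (i) and (ii) are taken for granted in the paper as consequences of structural stability and the definition of $\mathcal{G}(f)$; spelling them out does no harm, but the decisive content is step~(iii), where you and the paper argue identically.
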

\begin{proof}
If $\mathcal{G}(f )$ would have a loop, the two unstable $\overline{\overline{\mathcal{N}} }(f)$-separatrices at some critical point for $f$ would approach the same zero, say $[a]$, on $T$. In that case, the zeros (simple!) for $f$ in the plane, corresponding to $[a]$, will then be approached by two {\it different} trajectories (of the planar version $\overline{\mathcal{N}}(f)$) 
with the same value of arg $f$. This is impossible (cf.
the Comment on Fig.\ref{Figure1}).
The second part of the assertion follows by interchanging the roles of the poles and zeros for $f$.
\end{proof}
\begin{corollary}
\label{C6.3}
An edge in $\mathcal{G}(f )$ or $\mathcal{G}^{*}(f )$ is contained in the boundaries of two different 
faces.
\end{corollary}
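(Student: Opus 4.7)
The plan is to reduce the statement to the no-loop property of the dual graph $\mathcal{G}^{*}(f)$ established in Lemma \ref{L6.2N}, by analysing the local phase portrait at the saddle through which each edge of $\mathcal{G}(f)$ passes.

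First I would fix an edge $e$ of $\mathcal{G}(f)$, which by Definition \ref{D6.1} is the unstable manifold at some orthogonal saddle $s$, consisting of two unstable separatrices $e^{+}, e^{-}$ terminating at two (by Lemma \ref{L6.2N}, distinct) zeros. In a small disk around $s$ the four separatrices — the two unstable $e^{\pm}$ and the two stable, call them $s_{1}^{*}, s_{2}^{*}$ — partition the disk into four open quadrants, and the two local ``sides'' of $e$ are each the union of two consecutive quadrants glued along one of the stable separatrices.

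Second, I would observe that in any such quadrant the backward ($\alpha$-) limit of every trajectory coincides with the backward limit of the bounding stable separatrix; this is the standard hyperbolic-saddle behaviour, perfectly applicable because non-degeneracy (Theorem \ref{T1.2}) places $\overline{\overline{\mathcal{N}}}(f)$ in the Morse--Smale regime on $T$. Hence the two sides of $e$ near $s$ lie in the basins of repulsion of, respectively, the two poles $[b_{1}]$ and $[b_{2}]$ that are the $\alpha$-limits of $s_{1}^{*}$ and $s_{2}^{*}$.

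The crucial step is to show that $[b_{1}]\neq [b_{2}]$. If they coincided, then the stable manifold at $s$ — which forms a single edge of $\mathcal{G}^{*}(f)$ joining $[b_{1}]$ and $[b_{2}]$ — would be a loop at that common pole, contradicting Lemma \ref{L6.2N} applied to $\mathcal{G}^{*}(f)$. Consequently the two sides of $e$ lie in two distinct basins $F_{b_{1}}(f)\neq F_{b_{2}}(f)$, i.e.\ in two distinct faces of $\mathcal{G}(f)$. The parallel statement for edges of $\mathcal{G}^{*}(f)$ follows by the same argument with the roles of zeros/poles and stable/unstable separatrices interchanged, or equivalently by invoking the duality $\overline{\overline{\mathcal{N}}}(1/f)=-\overline{\overline{\mathcal{N}}}(f)$ of Lemma \ref{T1.3}(a). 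The only point requiring real care is the local identification of the two sides of $e$ with the correct pair of stable separatrices and their $\alpha$-limit poles; once this bookkeeping is done, the no-loop lemma does all the work.
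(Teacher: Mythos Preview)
Your argument is correct and is exactly the reasoning the paper intends: the Corollary is stated without proof immediately after Lemma~\ref{L6.2N}, and the parenthetical in the proof of Lemma~\ref{L6.7N} (``since otherwise the two stable separatrices at the saddle on such an edge must originate from $[b_{j}]$'') confirms that the no-loop property of $\mathcal{G}^{*}(f)$ is the operative fact. Your only addition is the careful local bookkeeping at the saddle, which the paper leaves implicit.
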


\noindent
Next we introduce a graph on $T$, denoted $\mathcal{G}(f) \wedge  \mathcal{G}^{*}(f )$,  which may be considered as the ``common refinement of $\mathcal{G}(f))$ and $\mathcal{G}^{*}(f )$'':
 
\begin{definition}
\label{D6.5}
The {\it vertices} of $\mathcal{G}(f) \wedge \mathcal{G}^{*}(f )$ are defined as the zeros, poles and critical points for $f$, whereas the {\it edges} are the stable and unstable separatrices of $\overline{\overline{\mathcal{N}} }(f)$ at the critical points for $f $. 
\end{definition}

The faces of $\mathcal{G}(f) \wedge \mathcal{G}^{*}(f )$ are the so-called {\it canonical regions} for $\overline{\overline{\mathcal{N}} }(f)$, i.e. the connected components of what is left after deleting from $T$ all the $\overline{\overline{\mathcal{N}} }(f)$-equilibria and all stable and unstable manifolds at the saddles of $\overline{\overline{\mathcal{N}} }(f)$. A priori, the canonical regions of a $C^{1}$-structurally stable flow on $T$ (without closed orbits) are of one of the Types 1,2,3 in Fig. \ref{Figure12} (cf. Fig.\ref{Figure2N} and \cite{Peix2}). However, by Lemma \ref{L6.2N} the flow 
$\overline{\overline{\mathcal{N}} }(f)$- although structurally stable - cannot admit canonical regions of Types 2 and 3.

\begin{figure}[h]
\begin{center}
\includegraphics[scale=0.9]{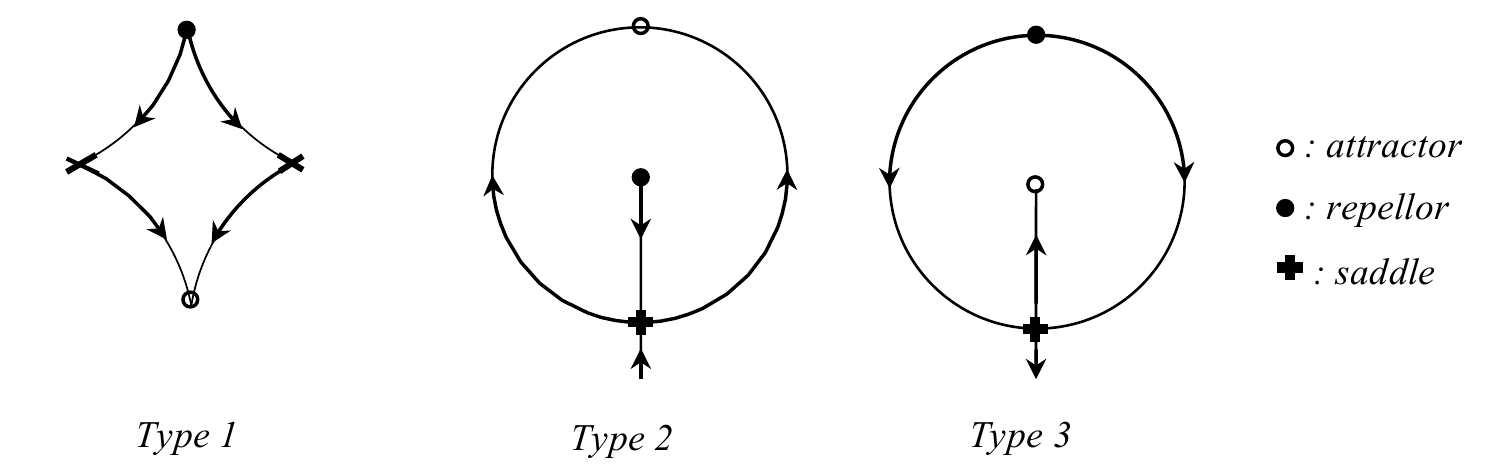}
\caption{\label{Figure12}The canonical regions of a structural stable flow on $T$}
\end{center}
\end{figure}

So. we only have to deal with canonical regions of Type 1. Since all zeros, poles and critical points for $f$ are simple, we find: (see 
Subsection 1.1)

\begin{lemma}
\label{L6.5N}
In a canonical region of $\overline{\overline{\mathcal{N}} }(f)$, the angles (anti-clockwise measured) at the pole and the zero are well-defined, strictly positive and equal.
\end{lemma}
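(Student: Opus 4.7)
The plan is to exploit two facts established earlier: (i) $\arg f$ is constant along every non-singular trajectory of $\overline{\overline{\mathcal{N}}}(f)$ (Subsection 1.1), and (ii) at a simple pole or simple zero ($k=1$ in the Comment on Fig.\ref{Figure1}) the outgoing/incoming trajectories form a radially distributed star node in which the angle between two of them equals the difference of their $\arg f$-values. Together these two facts should deliver both the equality and the strict positivity of the two angles.

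First I would pin down the structure of a Type~1 canonical region $R$. Having excluded Types~2 and~3 via Lemma~\ref{L6.2N}, $R$ is a ``curvilinear quadrilateral'' whose boundary consists of a pole $[b]$ (unstable star node), a zero $[a]$ (stable star node), two saddles $s_L,s_R$, and four separatrix segments: the stable-manifold pieces of $s_L$ and $s_R$ coming in from $[b]$, and the unstable-manifold pieces going out to $[a]$. Since each of these four segments is itself a trajectory along which $\arg f$ is constant, and since at a simple critical point $f$ takes a finite nonzero value $f(s_\bullet)$, continuity forces $\arg f\equiv \arg f(s_L)$ along the two left-hand segments and $\arg f\equiv \arg f(s_R)$ along the two right-hand ones.

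Applying now the $k=1$ case of the Comment on Fig.\ref{Figure1} at the simple pole $[b]$, the anticlockwise angle at $[b]$ between the two bounding outgoing trajectories equals $\arg f(s_R)-\arg f(s_L)\ (\bmod\,2\pi)$; the same computation at the simple zero $[a]$, with ``outgoing'' replaced by ``incoming'', gives the same number. Hence the two angles are well defined and equal.

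For strict positivity I would use the 2-dimensional character of $R$: its interior is foliated by a 1-parameter family of pole-to-zero trajectories, each carrying its own value of $\arg f$, so the corresponding $\arg f$-range is a non-degenerate interval; equivalently, if one had $\arg f(s_L)\equiv \arg f(s_R)\pmod{2\pi}$, the $k=1$ uniqueness at the star node $[b]$ would identify the two bounding outgoing trajectories as a single one, forcing $R$ to degenerate. The only thing that needs care, rather than a real obstacle, is the anticlockwise orientation: one has to verify that the sector at $[b]$ and the sector at $[a]$ are parametrized by $\arg f$ in a compatible way, so that they produce literally the same angle and not its $2\pi$-complement. This follows from the monotonic dependence of $\arg f$ on the sweep parameter as one moves an interior pole-to-zero trajectory across $R$, which is the same at both endpoints.
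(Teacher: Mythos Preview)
Your proposal is correct and follows exactly the route the paper intends: the paper's ``proof'' is nothing more than the pointer ``see Subsection~1.1'', i.e., the Comment on Fig.~\ref{Figure1}, and you have simply unpacked that reference. The key inputs are precisely the ones you use: constancy of $\arg f$ along trajectories, and the $k=1$ case of the star-node description, which gives that the angle between two distinct incoming (resp.\ outgoing) trajectories at a simple zero (resp.\ pole) equals the difference $\Delta$ of their $\arg f$-values; since the two bounding separatrix pairs pass through the saddles $s_L,s_R$ and hence carry $\arg f(s_L)$ and $\arg f(s_R)$, both angles equal the same $\Delta$, which is nonzero because a simple star node realizes each principal value of $\arg f$ exactly once.
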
		      

\noindent
Since a face $ F_{b_{j}}\!(f)$ 
is built up from all canonical 
regions 
that have $[b_{j}]$ 
in common, we find: 
\begin{corollary}
\label{C6.6N}
All (anti-clockwise measured)
angles spanning a sector of $ F_{b_{j}}\!(f)$
at the vertices in its boundary, 
 are non-vanishing and sum up to $2 \pi$. 	
\end{corollary}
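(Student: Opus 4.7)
The approach is to decompose the basin $F_{b_j}(f)$ into its constituent canonical regions and then transfer the angle information from the pole $[b_j]$ (where summation is geometrically obvious) to the zeros on $\partial F_{b_j}(f)$ by invoking Lemma \ref{L6.5N}. Concretely, the Type-1 canonical regions making up $F_{b_j}(f)$ are precisely those whose repellor equals $[b_j]$; list them as $R_1,\dots,R_m$. Each $R_i$ subtends a sector at $[b_j]$ of some angle $\theta_i$, and since $[b_j]$ is a simple pole for $f$ (thus a star-node for $\overline{\overline{\mathcal{N}}}(f)$, see the Comment on Fig.\ref{Figure1}) these sectors are disjoint and exhaust a punctured neighbourhood of $[b_j]$. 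Hence
\begin{equation*}
\theta_1+\cdots+\theta_m \;=\; 2\pi.
\end{equation*}

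By Lemma \ref{L6.5N}, each $\theta_i$ is strictly positive and equals the angle at which $R_i$ meets its attracting zero $[a_i]$. The sectors of $F_{b_j}(f)$ at the vertices lying on $\partial F_{b_j}(f)$ are precisely these zero-side angles of the $R_i$, provided one counts each boundary vertex with multiplicity equal to the number of canonical regions of $F_{b_j}(f)$ that approach it from within. Summing over $i$ and using the equality of the two angles in each $R_i$ term by term then yields total angular mass $2\pi$ on the boundary side as well; non-vanishing of every sector is immediate from the strict positivity in Lemma \ref{L6.5N}.

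The only non-routine point in the argument is the multiplicity bookkeeping just mentioned: a single zero $[a]$ may lie on $\partial F_{b_j}(f)$ several times, once for each canonical region in $F_{b_j}(f)$ whose attractor is $[a]$, and the corresponding sectors of $F_{b_j}(f)$ at $[a]$ must be summed separately. To justify this, I would verify that the canonical regions of $F_{b_j}(f)$ are in bijection with the (multi-)visits of $\partial F_{b_j}(f)$ to the vertices of $\mathcal{G}(f)$, a fact already encoded in the face structure of the common refinement $\mathcal{G}(f)\wedge\mathcal{G}^*(f)$ introduced in Definition \ref{D6.5}, together with Lemma \ref{L6.2N} (absence of loops) to rule out degenerate incidences. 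Once this bookkeeping is in place, the corollary reduces to the single identity $\sum_i\theta_i=2\pi$ pushed from the pole to the boundary vertices through Lemma \ref{L6.5N}.
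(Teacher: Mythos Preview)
Your argument is correct and follows essentially the same route as the paper: the corollary is deduced from the fact that $F_{b_j}(f)$ is the union of all canonical regions having $[b_j]$ as their repellor, together with Lemma~\ref{L6.5N}, which transfers the angle at the pole to the angle at the zero in each such region. Your additional bookkeeping about multiplicities of boundary vertices is a careful elaboration of what the paper leaves implicit.
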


\begin{figure}[h]
\begin{center}
\includegraphics[scale=0.8]{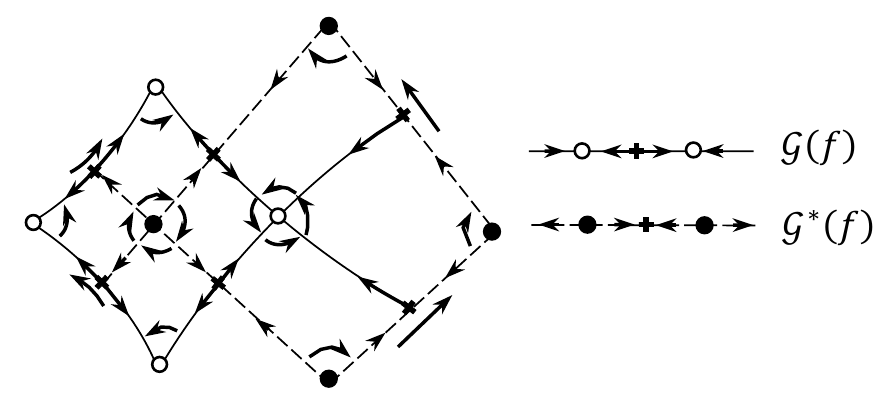}
\caption{\label{Figure13} Oriented facial walks on $\mathcal{G}(f )$ and $\mathcal{G}^{*}(f )$.}
\end{center}
\end{figure}

\newpage
\begin{lemma}
\label{L6.7N}
Each subgraph $\partial F_{b_{j}}\!(f)$ is 
Eulerian\footnote{\label{FN13}i.e. the 
graph $\partial F_{b_{j}}(f)$ 
admits a so-called {\it Euler trail}: a closed walk that traverses each
  edge exactly once and goes through all vertices. We do not distinguish between an Euler trail and its cyclic shift}.
\end{lemma}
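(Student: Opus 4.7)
The plan is to exhibit the required Euler trail as the oriented facial walk around the face $F_{b_{j}}(f)$ of the cellularly embedded graph $\mathcal{G}(f)$, in the spirit of the Heffter-Edmonds-Ringel face traversal procedure illustrated in Fig.\ \ref{Figure13}. The decisive input is that $F_{b_{j}}(f)$ is a topological open disk: since all zeros, poles and critical points of $f$ are simple, the repellor $[b_{j}]$ is a star node (Comment on Fig.\ \ref{Figure1}), and by the discussion following Definition \ref{D6.5} every canonical region of $\overline{\overline{\mathcal{N}}}(f)$ is of Type 1. Concatenating all canonical regions meeting $[b_{j}]$ then yields a flow-induced homeomorphism of $F_{b_{j}}(f)$ onto an open disk (with $[b_{j}]$ as the unique source and $\partial F_{b_{j}}(f)$ as the cellular boundary).

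With the disk property in place, I would carry out the standard face traversal: starting at an arbitrary vertex on $\partial F_{b_{j}}(f)$, walk along a boundary edge with $F_{b_{j}}(f)$ lying on a fixed (say, right) side, and at each successive vertex take the first turn keeping $F_{b_{j}}(f)$ on that side in the local cyclic rotation. Because $F_{b_{j}}(f)$ is a single open disk, this procedure produces one closed walk $W$ that completely traces $\partial F_{b_{j}}(f)$. Now Corollary \ref{C6.3} tells us that each edge of $\mathcal{G}(f)$ lies in the boundaries of two \emph{different} faces; hence $F_{b_{j}}(f)$ sits on exactly one side of each of its bounding edges, so $W$ traverses every boundary edge exactly once (rather than twice, as would happen if some edge were bounded by $F_{b_{j}}(f)$ on both sides). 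Since every vertex of $\partial F_{b_{j}}(f)$ is, by definition of this subgraph, an endpoint of such an edge, $W$ visits all vertices as well. Thus $W$ is the claimed Euler trail.

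The step I expect to require most care is the disk property of $F_{b_{j}}(f)$, i.e.\ the verification that the oriented facial walk comes out as \emph{one} closed walk instead of a disjoint union of cycles; this is really the point where we exploit that Lemma \ref{L6.2N} has ruled out canonical regions of Types 2 and 3 and that all equilibria are simple. Once this is established, Corollary \ref{C6.3} closes the argument with essentially no further work.
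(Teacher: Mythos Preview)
Your proposal is correct and follows essentially the same route as the paper: construct the facial walk $w_{b_j}$ by going once around the canonical (Type~1) regions incident with the repellor $[b_j]$, and then invoke Corollary~\ref{C6.3} to conclude that no edge is traversed twice. The only cosmetic difference is that the paper phrases the traversal directly in terms of the canonical regions rather than the Heffter--Edmonds--Ringel face traversal, and argues the ``once only'' step dynamically (a repeated edge would force both stable separatrices at its saddle to emanate from $[b_j]$), which is exactly your ``face on both sides'' observation.
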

\begin{proof}
Traverse 
the set of all canonical regions centered at $[b_{j}]$ once. In this way we determine a closed walk, say $w_{b_{j}}$, through all the vertices and edges of $\partial F_{b_{j}}(f)$
; see Fig. \ref{Figure13}. By Corollary \ref{C6.3}, this walk contains each edge of $\partial F_{b_{j}}(f)$ only once (since otherwise the two stable separatrices at the saddle on such an edge must originate from $[b_{j}]$). So, $w_{b_{j}}$  is the desired Euler trail.			                    
\end{proof}

\noindent
The  walk $w_{b_{j}}$  in the above proof will be referred to as to the {\it facial walk} for $\partial F_{b_{j}}(f)$. Analogously, we define the (Eulerian!) facial walks on the boundaries of the $\mathcal{G}^{*}(f )$-faces (i.e., the {\it basins of attraction} of the zeros, say $[a_{i}], i=1, \cdots , r,$ for $f$ on $T$ as attractors for the Newton flow $\overline{\overline{\mathcal{N}} }(f)$).
\begin{remark}
\label{R6.8N}
Note that in these facial walks the same vertex may occur more than once. However, by Lemma \ref{L6.2N}, 
a vertex in a facial walk cannot be adjacent to itself.
\end{remark}

\noindent
\underline{The orientations of $\mathcal{G}(f)$ and $\mathcal{G}^{*}(f )$}: \\
We endow (the faces of) $\mathcal{G}(f )$ with a coherent orientation as follows:

\noindent
For each facial walk we demand that the (constant) values of arg $f(z)$ on consecutive edges form an increasing sequence. This is imposed by the anti-clockwise ordering of the $\mathcal{G}(f )$-edges around a common vertex, which on its turn induces clockwise orientations of the $\mathcal{G}^{*}(f )$-edges incident to a given vertex. This leads to an orientation of (the facial walks on) $\mathcal{G}^{*}(f )$ which is opposite to the  orientation of $\mathcal{G}(f )$ as chosen before; see Fig.\ref{Figure13}. 
From now, on we assume that all graphs $\mathcal{G}(f )$ and $\mathcal{G}^{*}(f ), f \in \tilde{E}_{r},$  are oriented in this way: $\mathcal{G}(f )$
always 
clockwise; $\mathcal{G}^{*}(f )$
always anti-clockwise. By $- \mathcal{G}(f )$ we mean $\mathcal{G}(f )$ with anti-clockwise orientation and by $-\mathcal{G}^{*}(f )$ the clockwise oriented graph $\mathcal{G}^{*}(f )$.
\begin{lemma}
\label{L6.8N}
The (multi)graphs $\mathcal{G}(f )$ and  $\mathcal{G}^{*}(f )$
are {\it connected} and {\it cellularly embedded}\footnote{\label{FTN13}i.e. each face is homeomorphic to an {\it open} disk in $\mathbb{R}^{2}$.}.
\end{lemma}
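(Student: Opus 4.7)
My plan is to handle $\mathcal{G}(f)$ directly and then deduce the statements for $\mathcal{G}^{*}(f)$ from the duality Lemma \ref{T1.3}(a): applying the same argument to $1/f$ (which is again non-degenerate of order $r$) gives $\mathcal{G}^{*}(f)=\mathcal{G}(1/f)$ up to reversal of orientation, inheriting both properties.

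\emph{Cellular embedding.} I would assemble each face $F_{b_{j}}(f)$ of $\mathcal{G}(f)$ from Type 1 canonical regions. By the discussion preceding Lemma \ref{L6.5N} only Type 1 regions occur, and each such region incident to $[b_{j}]$ is bounded by two stable separatrices from $[b_{j}]$ to adjacent saddles and two unstable separatrices joining those saddles to a common attractor; it is therefore a curvilinear open quadrilateral, topologically an open disk. Two adjacent canonical regions around $[b_{j}]$ are glued along an open stable separatrix from $[b_{j}]$ to a shared saddle, and Corollary \ref{C6.6N} combined with Lemma \ref{L6.5N} (equality of the angles at the pole and at the zero in each canonical region) guarantees that these sectors fill up a total anti-clockwise angle of $2\pi$ around $[b_{j}]$. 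To turn this local picture into a global homeomorphism I would use the coordinates $(\arg f, 1/|f|)\in S^{1}\times(0,\infty)$ on $F_{b_{j}}(f)\setminus\{[b_{j}]\}$: along any trajectory emanating from $[b_{j}]$, $\arg f$ is constant and $1/|f|$ strictly increases from $0$, producing a homeomorphism onto an annular region that extends by collapsing $\{1/|f|=0\}$ to the point $[b_{j}]$. The outcome is an identification of $F_{b_{j}}(f)$ with an open disk.

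\emph{Connectedness.} Suppose for contradiction $\mathcal{G}(f)=G_{1}\sqcup G_{2}$ with both components non-empty. First I would check that no vertex is isolated: a zero $[a_{i}]$ is a simple stable star node (Comment on Fig.\ \ref{Figure1}(a)), so every punctured neighbourhood of $[a_{i}]$ meets some face $F_{b_{j}}(f)$; by Lemma \ref{L6.7N} the Euler trail of $\partial F_{b_{j}}(f)$ must pass through $[a_{i}]$, giving it at least one incident edge in $\mathcal{G}(f)$. Hence each $G_{i}$ contains an edge. Lemma \ref{L6.7N} also implies that every $\partial F_{b_{j}}(f)$ is a connected subgraph, so lies wholly in $G_{1}$ or wholly in $G_{2}$. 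Defining $S_{i}:=\bigcup\bigl\{\overline{F_{b_{j}}(f)}:\partial F_{b_{j}}(f)\subset G_{i}\bigr\}$, I obtain closed subsets of $T$ with $S_{1}\cup S_{2}=T$ (each edge in $G_{i}$ lies, by Corollary \ref{C6.3}, on the boundary of exactly two faces, both of which are in $S_{i}$) and $S_{1}\cap S_{2}=\emptyset$ (distinct closed faces meet only along $\mathcal{G}(f)$, and the boundaries in question belong to disjoint components). Both $S_{i}$ are non-empty, contradicting the connectedness of $T$.

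The main obstacle is the cellular step, specifically showing that the fan of canonical regions around $[b_{j}]$ glues into a genuine open disk rather than something multiply sheeted or punctured. The two crucial ingredients are (i) the exact $2\pi$ angular sum supplied by Corollary \ref{C6.6N}, which prevents self-overlap, and (ii) the gradient-like character of the Newton flow (conservation of $\arg f$ along trajectories), which simultaneously rules out closed orbits inside $F_{b_{j}}(f)$ and furnishes the radial coordinate used to build the homeomorphism. Once cellularity is established, the separation argument for connectedness is purely topological.
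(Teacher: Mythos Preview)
Your approach is correct and genuinely different from the paper's. The paper argues combinatorially: it takes the facial walks $w_{b_j}$, forms abstract facial polygons, glues them along matching edges (using Corollary~\ref{C6.3}) to reconstruct a closed orientable surface carrying a 2-cell embedded copy of $\mathcal{G}(f)$, identifies this surface with $T$, and then reads off both cellularity and (via Euler's formula) connectedness. This route dovetails with the rotation-system machinery developed in Section~\ref{Nsec7}. You instead exploit the analytic structure of the Newton flow directly: the map $z\mapsto(\arg f \bmod 2\pi,\,1/|f|)$ (essentially $z\mapsto 1/f(z)$ in polar form) furnishes an explicit disk model of each basin, and connectedness follows from a clean separation argument on $T$ using that each $\partial F_{b_j}(f)$ is connected. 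Your way is more self-contained and avoids the polygon-gluing theory; the paper's way sets up the abstract framework used later.

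Two small points to tighten. First, the image of $F_{b_j}(f)\setminus\{[b_j]\}$ under your map is not a full annulus: along each stable separatrix from $[b_j]$ to a saddle $\sigma_k$ the radial coordinate $1/|f|$ only reaches the finite value $1/|f(\sigma_k)|$ (the saddle lies on $\mathcal{G}(f)$, hence outside the face), so the image is a punctured plane with finitely many outward radial slits removed. After adding the centre this region is star-shaped about the origin, hence still homeomorphic to an open disk, so your conclusion stands. Second, your ``no isolated vertex'' step via Lemma~\ref{L6.7N} is not quite airtight, since an Euler trail need not visit an isolated vertex of the subgraph $\partial F_{b_j}(f)$; the quickest fix is to observe directly that any canonical region with attractor $[a_i]$ has an unstable separatrix in its boundary, i.e.\ a $\mathcal{G}(f)$-edge incident to $[a_i]$.
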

\begin{proof}
We focus on $\mathcal{G}(f )$ and follow the treatise \cite{MoTh} closely.
Consider the $r$ facial walks $w_{b_{j}}$ and put $l_{j}=$ length $w_{b_{j}}$. Consider for each $w_{b_{j}}$ a so-called {\it facial polygon}, i.e. a polygon in the plane with $l_{j}$ sides labelled by the edges 
of $\partial F_{b_{j}}(f)$(taking the orientation of $w_{b_{j}}$  into account)
, so that each polygon is disjoint from the other polygons. Now we take all facial polygons. 
Each $\mathcal{G}(f )$-edge occurs precisely once in two different facial walks and this determines orientations of the sides of the polygons.
By identifying each side with its mate, we construct (cf. \cite{MoTh}) an orientable, connected surface $S$ , homeomorphic to $T$, and -in $S$- a 
2-cell embedded graph, which is -up to an isomorphism-equal to $\mathcal{G}(f )$. 
By Euler's formula for graphs on $T$ (cf. \cite{Gib}),
$\mathcal{G}(f )$ is connected and orientable 
as well. Finally, we note that a 2-cell embedding is always cellular (cf. \cite{MoTh}). 
\end{proof}

The {\it abstract directed graph}, underlying $\mathcal{G}(f) \wedge \mathcal{G}^{*}(f )$, will be denoted by $\mathbb{P}(f )$, where the directions are induced by the orientations of the (un)stable separatrices at $\overline{\overline{\mathcal{N}} }(f)$-saddles. Each canonical region is represented by a quadruple of directed edges in $\mathbb{P}(f )$, and is associated with precisely one pole, one zero (in opposite position) and two critical  points for $f$ on $T$. Following Peixoto (\cite{Peix1}, \cite{Peix2}), such a quadruple is called a {\it distinguished set} (of Type 1). The graph $\mathbb{P}(f )$), together with the collection of all distinguished sets is denoted by $\mathbb{P}^{d}(f )$. 
We say ``$\mathbb{P}^{d}(f )$ is realized by the distinguished graph of $\overline{\overline{\mathcal{N}} }(f)$ on $T$''.
\\ 

\noindent
We need a classical result due to Peixoto (cf. \cite{Peix2}) on structurally, $C^{1}$-stable vector fields 
on 2-dimensional compact manifolds. In the context of our elliptic Newton flows this yields (together with Lemma \ref{L1.1}
): if $ f, h \in \tilde{E}_{r}$, then:
\begin{equation}
\label{vgl22}
\overline{\overline{\mathcal{N}} }(f) \sim \overline{\overline{\mathcal{N}} }(h) \;\;\Leftrightarrow \;\;
\mathbb{P}^{d}(f ) \sim \mathbb{P}^{d}(h).
\end{equation}
Here, $\sim$ in the l.h.s stands for ``conjugacy'' and $\sim$ in the r.h.s. for {\it isomorphism between} $ \mathbb{P}^{d}(f ) $ and $ \mathbb{P}^{d}(h ) $ (as directed abstract graphs), preserving the distinguished sets and respecting
the cyclic ordering (induced by the embedding in $T$) of the distinguished sets around a common vertex.

\begin{theorem} $($Classification of structurally stable elliptic Newton flows by graphs$)$\\
\label{T6.10N}
Let $\overline{\overline{\mathcal{N}} }(f)$ and $\overline{\overline{\mathcal{N}} }(h)$ be structurally stable (thus
$f, h\in \tilde{E}_{r}),$ then:
\begin{equation*}
\overline{\overline{\mathcal{N}} }(f) \sim \overline{\overline{\mathcal{N}} }(h) \Leftrightarrow \mathcal{G}(f) \sim \mathcal{G}(h) 
(\text{and thus also } \mathcal{G}^{*}(f) \sim \mathcal{G}^{*}(h)),
\end{equation*}
where $\sim$ in the r.h.s. stands for equivalency between the oriented
graphs $($i.e., an isomorphism respecting their orientations$)$.
\end{theorem}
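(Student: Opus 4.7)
The plan is to reduce the statement, via Peixoto's equivalence (\ref{vgl22}), to a purely combinatorial claim: for $f,h\in\tilde{E}_r$,
\[
\mathbb{P}^{d}(f)\sim\mathbb{P}^{d}(h)\;\Longleftrightarrow\;\mathcal{G}(f)\sim\mathcal{G}(h),
\]
where the right-hand $\sim$ is equivalence of oriented cellularly embedded toroidal graphs. Once this is established, the parenthetical assertion $\mathcal{G}^{*}(f)\sim\mathcal{G}^{*}(h)$ follows either from Duality (Lemma \ref{T1.3}(a)) applied to $1/f,\,1/h$, or from the fact, to be verified along the way, that $\mathcal{G}^{*}$ is the geometric dual of $\mathcal{G}$ together with the observation that equivalence of embedded graphs induces equivalence of their geometric duals.

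The direction $(\Leftarrow)$ is the easy one: $\mathcal{G}(f)$ is obtained from $\mathbb{P}^{d}(f)$ by deleting pole-vertices and stable separatrices; zeros, poles and critical points are intrinsically distinguishable in $\mathbb{P}^{d}(f)$ by the local pattern of incoming/outgoing edges in the distinguished sets (saddles have two in- and two out-going, zeros only incoming, poles only outgoing). An isomorphism of distinguished graphs respects this trichotomy and, by hypothesis, the cyclic ordering of distinguished sets around each vertex. Restricting to zero-vertices and unstable-separatrix edges yields a graph isomorphism that preserves the induced cyclic ordering of edges around each zero, i.e., the rotation system of the embedding; by the Heffter-Edmonds-Ringer principle this is precisely an equivalence of oriented cellularly embedded graphs.

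For $(\Rightarrow)$, assume $\phi:\mathcal{G}(f)\to\mathcal{G}(h)$ is an equivalence of oriented embedded graphs. By the rotation principle together with the cellularity supplied by Lemma \ref{L6.8N}, $\phi$ extends to an orientation-preserving homeomorphism of $T$ mapping faces of $\mathcal{G}(f)$ bijectively onto faces of $\mathcal{G}(h)$ and respecting the oriented facial walks $w_{b_j}$ of Lemma \ref{L6.7N}. Because each face $F_{b_j}(f)$ contains the unique pole $[b_j]$, and each edge of $\mathcal{G}(f)$ contains the unique critical point whose unstable manifold it is, $\phi$ canonically determines bijections on poles and on critical points. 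A stable separatrix at a critical point terminates at the pole lying in each of the two faces incident to the corresponding $\mathcal{G}(f)$-edge (Corollary \ref{C6.3}), so $\phi$ also transports the edge set of $\mathcal{G}^{*}(f)$ to that of $\mathcal{G}^{*}(h)$ — confirming in passing that $\mathcal{G}^{*}$ is the geometric dual of $\mathcal{G}$.

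It remains to match the distinguished sets. By Lemma \ref{L6.5N} and Corollary \ref{C6.6N}, the canonical regions are in bijection with the corners of the facial walks $w_{b_j}$: each corner at a pole $[b_j]$ (a pair of consecutive edges in $w_{b_j}$) cuts off a Type-1 quadrilateral whose opposite vertex is a zero and whose two remaining vertices are the critical points on those two edges. The cyclic ordering of these corners around $[b_j]$ (respectively around the opposite zero) is precisely the one recorded by the oriented embedding of $\mathcal{G}(f)$, hence preserved by $\phi$; the two stable separatrices bounding each canonical region have already been identified above. Assembling these data yields an isomorphism $\mathbb{P}^{d}(f)\sim\mathbb{P}^{d}(h)$ in the sense of Peixoto, which closes the argument. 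The delicate step — and the one I expect to require the most care — is the verification that the correspondence ``corner at pole $\leftrightarrow$ diagonally opposite zero'' is forced by $\phi$; this is where the equal-angle property of Lemma \ref{L6.5N} and the coherent orientations of $\mathcal{G}(f)$ and $\mathcal{G}^{*}(f)$ displayed in Fig.~\ref{Figure13} do the essential work.
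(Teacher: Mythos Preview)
Your approach is the same as the paper's: invoke Peixoto's equivalence (\ref{vgl22}) and reduce to the combinatorial statement $\mathbb{P}^{d}(f)\sim\mathbb{P}^{d}(h)\Leftrightarrow\mathcal{G}(f)\sim\mathcal{G}(h)$. The paper's proof is literally one line --- ``Apply (\ref{vgl22})'' --- treating that combinatorial equivalence as immediate from the construction of $\mathbb{P}^{d}(f)$ out of $\mathcal{G}(f)\wedge\mathcal{G}^{*}(f)$ (with $\mathcal{G}^{*}(f)$ being the geometric dual, hence determined by $\mathcal{G}(f)$); the more explicit justification appears later in Section~\ref{Nsec7}. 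Your detailed unpacking of both directions is correct and fills in exactly what the paper leaves implicit, so nothing is missing.

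One small slip: your direction labels are swapped. Relative to your displayed biconditional, the argument you call $(\Leftarrow)$ --- starting from an isomorphism of $\mathbb{P}^{d}$'s and restricting to zero-vertices and unstable separatrices --- is actually the $(\Rightarrow)$ direction, and conversely. The content of both arguments is fine; only the tags need interchanging. Also, the ``delicate step'' you flag at the end is not genuinely delicate: once $\phi$ preserves the rotation system, the canonical regions (Type~1 quadrilaterals) are in forced bijection with corners of the facial walks, and Lemma~\ref{L6.5N} is not really needed to pin down the opposite-zero correspondence --- the combinatorics of $\mathcal{G}\wedge\mathcal{G}^{*}$ already does it.
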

\begin{proof}
Apply (\ref{vgl22}) to $\overline{\overline{\mathcal{N}} }(f)$ and $\overline{\overline{\mathcal{N}} }(h)$.
\end{proof}

\noindent
Graph $\mathcal{G}(\frac{1}{f})$ is also well-defined (with as faces $F_{a_{i}}(\frac{1}{f})$) and associated with the structurally stable flow $\overline{\overline{\mathcal{N}} }(\frac{1}{f})$
(=$- \overline{\overline{\mathcal{N}} }(f)$).
The flow
$\overline{\overline{\mathcal{N}} }(\frac{1}{f})$ is the dual version of $\overline{\overline{\mathcal{N}} }(f)$, i.e., $\overline{\overline{\mathcal{N}} }(\frac{1}{f})$ is obtained from $\overline{\overline{\mathcal{N}} }(f)$ by reversing the orientations of the trajectories of the latter flow, thereby changing repellers into attractors and vice versa.
Clearly, $\mathcal{G}(\frac{1}{f})$ and $\mathcal{G}^{*}(f)$ coincide, be it with opposite orientations, i.e., $\mathcal{G}(\frac{1}{f})=-\mathcal{G}^{*}(f)$, where, due to our convention on orientations, $\mathcal{G}(\frac{1}{f})$ is clockwise oriented. 
Also, we have: $\mathcal{G}(f)=-\mathcal{G}^{*}(\frac{1}{f})$.

Note that, in general, $\overline{\overline{\mathcal{N}} }(f)$ and $\overline{\overline{\mathcal{N}} }(\frac{1}{f})$ are {\it not }conjugate. In the special case where $\overline{\overline{\mathcal{N}} }(f) \sim\overline{\overline{\mathcal{N}} }(\frac{1}{f})$ we call these flows {\it self dual}, 
 and we have: (Theorem \ref{T6.10N})
\begin{equation*}.
\overline{\overline{\mathcal{N}} }(f) \sim \overline{\overline{\mathcal{N}} }(\frac{1}{f}) \Leftrightarrow \mathcal{G}(f) \sim \mathcal{G}(\frac{1}{f})  \Leftrightarrow \mathcal{G}(f) \sim -\mathcal{G}^{*}(f)
\end{equation*}
If $ \mathcal{G}(f)  \sim - \mathcal{G}^{*}(f)$
holds, we call $ \mathcal{G}(f)$ and $ \mathcal{G}^*(f)$ {\it self dual}.\\

\begin{remark} 
\label{NT6.12} $($On the classification under conjugacy and duality$)$\\
Conjugate flows are considered as equal. Although, in general, $\overline{\overline{\mathcal{N}} }(f) $ and 
$\overline{\overline{\mathcal{N}} }(\frac{1}{f})$ are not conjugate, it is reasonable to consider also these flows, being related by a trivial (but orientation reversing) identity, as "equal". See our paper \cite{HT3}.
\end{remark}

\begin{remark}
\label{NR6.11}
(On self-duality)\\
If $\overline{\overline{\mathcal{N}} }(f) $ is self dual and conjugate with $\overline{\overline{\mathcal{N}} }(h) $, then $\overline{\overline{\mathcal{N}} }(h) $ is also self dual.
\end{remark}

\begin{corollary}
\label{C6.11N}
Any two structurally stable $2^{nd}$ order elliptic Newton flows are conjugate. In particular, these flows are self dual.
\end{corollary}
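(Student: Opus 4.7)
My plan is to apply Theorem \ref{T6.10N}, reducing the statement to combinatorial uniqueness of the graph $\mathcal{G}(f)$ in the case $r=2$; the self-duality claim will then follow immediately since $1/f\in\tilde{E}_{2}$ whenever $f\in\tilde{E}_{2}$.

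For $f\in\tilde{E}_{2}$, the graph $\mathcal{G}(f)$ on $T$ has $2$ vertices (the zeros), $4$ edges (the $2r=4$ unstable manifolds at the saddles; Lemma \ref{T1.3}(b)), and $2$ faces (the basins of repulsion of the poles). By Lemma \ref{L6.2N} there are no loops, so each edge joins the two vertices and each vertex has degree $4$. A standard incidence count (each edge lies on the boundary of two distinct faces, Corollary \ref{C6.3}) then forces each facial walk to have length $4$ and to traverse each of the four edges exactly once.

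I would next observe that at each vertex the four sectors must alternate between the two faces in the pattern $F_{1},F_{2},F_{1},F_{2}$. If two adjacent sectors at a vertex belonged to the same face, the edge separating them would carry that face on both of its sides (flanking faces are constant along an edge in a cellular embedding), contradicting Corollary \ref{C6.3}. With the clockwise cyclic order at $v_{1}$ fixed as $(e_{1},e_{2},e_{3},e_{4})$, a direct enumeration of the six possible clockwise cyclic orders at $v_{2}$ -- combined with the torus condition $V-E+F=0$ and the alternating pattern above -- shows that only the identity rotation system is admissible; the remaining five rotation systems produce either a sphere embedding (four digons) or two faces of lengths $(6,2)$ in violation of the alternating condition. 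Hence $\mathcal{G}(f)$ is unique up to oriented equivalence for all $f\in\tilde{E}_{2}$, and by Theorem \ref{T6.10N} any two such flows are conjugate.

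Self-duality is then automatic: since $1/f\in\tilde{E}_{2}$ whenever $f\in\tilde{E}_{2}$, the flow $\overline{\overline{\mathcal{N}}}(1/f)$ is itself a structurally stable second-order elliptic Newton flow, hence conjugate to $\overline{\overline{\mathcal{N}}}(f)$ by the statement just established. The only genuinely computational step is the finite case analysis of the six rotation systems; this could alternatively be replaced by an invariant argument noting that the alternating sector condition forces each face to have exactly four corners distributed $(2,2)$ between the two vertices, which determines the cyclic orders up to simultaneous relabelling.
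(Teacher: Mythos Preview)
Your proof is correct and follows essentially the same strategy as the paper: reduce via Theorem~\ref{T6.10N} to the combinatorial uniqueness of $\mathcal{G}(f)$ for $f\in\tilde{E}_{2}$, then obtain self-duality by specializing to $h=1/f$. The only difference is in how the uniqueness of the graph is argued. The paper observes that, by Corollary~\ref{C6.3}, the two faces share their entire boundary, so there is a single facial walk of length~$4$ on two vertices (each occurring twice, non-consecutively by Lemma~\ref{L6.2N}); any two such walks are identified by a relabelling, and the walk determines the embedded graph (via the construction in the proof of Lemma~\ref{L6.8N}). You instead pin down the rotation system by a finite enumeration at $v_{2}$, filtered by the length-$(4,4)$ constraint and your alternating-sector observation. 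Both arguments are short and equivalent in content; your ``invariant'' alternative at the end is essentially the paper's facial-walk argument rephrased.
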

\begin{proof}
Let $\overline{\overline{\mathcal{N}} }(f), f  \in \tilde{E}_{2},$ be chosen arbitrarily. By Corollary \ref{C6.3},
the two faces of $\mathcal{G}(f)$ share their boundaries. So, the common facial walk $w_{f}$ of these faces is built up from the four $\mathcal{G}(f)$-edges and the two $\mathcal{G}(f)$-vertices (each appearing twice but not consecutive!). 
Hence, compare the construction in the proof of Lemma \ref{L6.8N} and see Fig. \ref{Figure14}, $\mathcal{G}(f)$ is determined by the anti-clockwise
oriented walk $w_{f}$.
The same holds for any other flow $\overline{\overline{\mathcal{N}} }(h)$ with facial walk $w_{h}, h  \in \tilde{E}_{2}.$ 
Apparently, $w_{f}$ and $w_{h}$ maybe considered as equal, under a suitably chosen relabeling of their vertices and edges. Hence $\mathcal{G}(f) \sim \mathcal{G}(h)$ and thus $\overline{\overline{\mathcal{N}} }(f) \sim \overline{\overline{\mathcal{N}} }(h)$. In particular, put $h=\frac{1}{f}$, then we find $\mathcal{G}(f) \!\sim\! \mathcal{G}(\frac{1}{f})$, compare Fig. \ref{Figure14} and Remark \ref{NR6.11}.
\end{proof}

\begin{figure}[h]
\begin{center}
\includegraphics[scale=1.3]{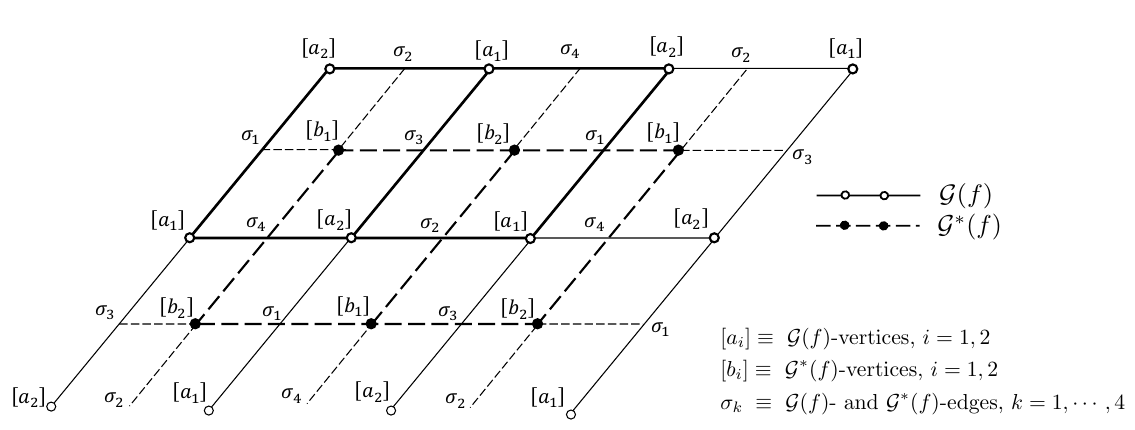}
\caption{\label{Figure14}The graphs $\mathcal{G}(f)$ 
and $\mathcal{G}^{*}(f)$ 
, $f \! \in \!\tilde{E}_{2}$.}
\end{center}
\end{figure}

\begin{remark}
\label{R6.9}
For basically the same proof of Corollary \ref{C6.11N} 
, see \cite{THBS}.
\end{remark}
\begin{remark}
\label{R6.10}
 The flow $\overline{\overline{\mathcal{N}} }$(sn), in the non-rectangular case (cf. Fig.\ref{Figure3N} ) exhibits an example of a  $2^{nd}$ order structurally stable elliptic Newton flow. By Corollary \ref{C6.11N}, this is the only possibility (up to conjugacy) for a flow in $\tilde{N}_{2}$ . Note that the flow in Fig.\ref{Figure4N} (rectangular case) is not structurally stable (because of the saddle connections).
\end{remark}

\noindent
We proceed by introducing flows that are closely related to $\mathcal{N}(f)$, $\overline{\mathcal{N} }(f)$ and 
$\overline{\overline{\mathcal{N}} }(f)$: the so-called {\it rotated Newton flows}, 
\begin{definition}
\label{D6.15N}
 For $f \in E_{r}$, let $\mathcal{N}^{\perp}(f)$  be a dynamical system of the type
 \begin{equation*}
  \dfrac{dz}{dt} = \dfrac{-if (z)}{f^{'} (z)}.
\end{equation*}
\end{definition}
Apparently, $\mathcal{N}^{\perp}(f)(=i\mathcal{N}(f))$
is a complex analytic vector field outside the set $C(f)$ of critical points for $f$. As in Subsection 1.1
, we turn 
$\mathcal{N}^{\perp}(f)$
into a $C^{1}$-system on the whole plane
with -on $\mathbb{C} \backslash C(f)$-
the same phase portrait as $\mathcal{N}^{\perp}(f)$ by $\overline{\mathcal{N} }^{\perp}(f):=i \overline{\mathcal{N} }(f)$.
The function $f$, being elliptic, the system $\overline{\mathcal{N} }^{\perp}(f)$
can be interpreted as a $C^{1}$-flow on $T$ and as such it will be referred to as to $\overline{\overline{\mathcal{N}} }^{\perp}(f)$,
in particular: 
$$
\overline{\overline{\mathcal{N}} }^{\perp}(f) \text{ is of the class $C^{1}$, and } \overline{\overline{\mathcal{N}} }^{\perp}(\frac{1}{f})=-\overline{\overline{\mathcal{N}} }^{\perp}(f)
$$

\begin{lemma}
\label{L6.16N}
Let $z^{\perp}(t)$
be the (maximal)
$\mathcal{N}^{\perp}(f)$-trajectory through a non-equilibrium $\check{z}=z^{\perp}(0)$
, then: 
\begin{enumerate}
\item[1.] $f(z^{\perp}(t))=e^{-it}f(\check{z}) [$thus $|f(z^{\perp}(t))|$= constant $(\neq 0)]$.
\item[2.] A zero or pole for $f$ is a center for $\mathcal{N}^{\perp}(f) [$thus also for $\overline{\mathcal{N} }^{\perp}(f)$, $\overline{\overline{\mathcal{N}} }^{\perp}(f) ]$. 
\item[3.] A $k$-fold critical point for $f$ is a $k$-fold saddle for $\overline{\mathcal{N} }^{\perp}(f)$ 
$[$thus also for $\overline{\overline{\mathcal{N}} }^{\perp}(f)].$
\end{enumerate}
\end{lemma}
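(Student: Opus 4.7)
The plan is to dispatch item 1 by a one-line chain-rule computation and then reduce items 2 and 3 to the known local phase portraits of the un-rotated flow (Fig.~\ref{Figure1}) via the identity $\overline{\overline{\mathcal{N}}}^{\perp}(f)=i\,\overline{\overline{\mathcal{N}}}(f)$, which merely rotates every tangent vector by $\pi/2$.

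For item~1, along a non-equilibrium trajectory $z^{\perp}(t)$ one has
\[
\frac{d}{dt}f(z^{\perp}(t))=f'(z^{\perp})\cdot\frac{-i\,f(z^{\perp})}{f'(z^{\perp})}=-i\,f(z^{\perp}(t)),
\]
whence $f(z^{\perp}(t))=e^{-it}f(\check z)$ and $|f\circ z^{\perp}|\equiv|f(\check z)|\neq 0$.

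For item~2, a zero or pole of $f$ is automatically an equilibrium of $\mathcal{N}^{\perp}(f)$, since $\overline{\overline{\mathcal{N}}}^{\perp}(f)$ vanishes exactly where $\overline{\overline{\mathcal{N}}}(f)$ does. Near a $k$-fold zero $a$, I would write $f(z)=(z-a)^{k}g(z)$ with $g(a)\neq 0$; then $|f|$ has a strict local minimum at $a$ and the level set $\{|f|=c\}$, for each sufficiently small $c>0$, is a Jordan curve encircling $a$. Item~1 forces each nearby trajectory of $\mathcal{N}^{\perp}(f)$ to lie on one such curve, so a punctured neighbourhood of $a$ is foliated by periodic orbits and $a$ is a center. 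For a pole $b$ of $f$, apply the same reasoning to $1/f$ (which has $b$ as a zero) and invoke $\overline{\overline{\mathcal{N}}}^{\perp}(1/f)=-\overline{\overline{\mathcal{N}}}^{\perp}(f)$; reversing every orbit preserves the center property.

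For item~3, at a $k$-fold critical point $z_{0}$ (with $f(z_{0})\neq 0$) I would expand $f(z)-f(z_{0})=c(z-z_{0})^{k+1}+O((z-z_{0})^{k+2})$, $c\neq 0$. Computing the leading term of $|f|^{2}-|f(z_{0})|^{2}$ shows that the level set $\{|f|=|f(z_{0})|\}$ has a transverse self-crossing at $z_{0}$ consisting of $2(k+1)$ equally spaced arcs whose tangent directions bisect those of the $2(k+1)$ separatrices of the $k$-fold saddle of $\overline{\mathcal{N}}(f)$ provided by Fig.~\ref{Figure1}(c)-(d). By item~1 the rotated trajectories lie precisely on these arcs, so $z_{0}$ is a $k$-fold saddle of $\overline{\mathcal{N}}^{\perp}(f)$ and hence of $\overline{\overline{\mathcal{N}}}^{\perp}(f)$. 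The main obstacle is precisely this last step for $k\geqslant 2$: the linearization of $\overline{\overline{\mathcal{N}}}(f)$ at a degenerate critical point is trivial, so one cannot read off the topological type from eigenvalues and must instead match the $2(k+1)$ separatrix directions with the tangents of the $|f|$-level set, using the orthogonality between $\arg f$-level curves and $|f|$-level curves together with the explicit $(k+1)$-fold zero of $f-f(z_{0})$.
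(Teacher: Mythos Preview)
Your argument is correct. For item~1 it agrees with the paper. For items~2 and~3 you take a genuinely different route. The paper's device is to observe that, away from $N(f)\cup P(f)$, the rotated flow is itself an ordinary Newton flow: $\mathcal{N}^{\perp}(f)=\mathcal{N}(h)$ for $h(z)=\exp(-i\log f(z))$. Since $h'=-ih\,f'/f$, a $k$-fold critical point of $f$ is a $k$-fold critical point of $h$, and item~3 then drops out of the Comment on Fig.~\ref{Figure1} applied to $h$---bypassing entirely the level-set expansion of $|f|^{2}$ and the $k\geqslant 2$ degeneracy obstacle you flag. For item~2 the paper instead linearizes $\mathcal{N}^{\perp}(f)$ at a $k$-fold zero or pole $z_{0}$ to obtain $\dot z=-i(z-z_{0})/k$ with purely imaginary eigenvalues $\pm i/k$, and then uses item~1 to rule out the focus and centro-focus alternatives (citing \cite{ALGM}), leaving only a center. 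Your approach via closed $|f|$-level curves is equally valid and arguably more geometric; to make it airtight you should add that a punctured neighbourhood of a zero or pole contains no critical points of $f$, so each such level curve carries no equilibria and must therefore be a single periodic orbit. The trade-off: the paper's $h$-substitution turns item~3 into a one-line reduction to known facts, while your direct analysis is self-contained but requires more care in promoting the $2(k+1)$ tangent directions of $\{|f|=|f(z_{0})|\}$ to an honest $k$-fold saddle portrait.
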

\begin{proof}
 Assertions 1., 3. : Use $\mathcal{N}^{\perp}(f)=i\mathcal{N}(f)$.
Note that outside $N(f) \cup P(f)$ the flow $\mathcal{N}^{\perp}(f)$ can be considered as the Newton flow for $h(z)=\exp (-i \log (f(z)))$. \\
For Assertion 2. : let $z_{0}$  be a zero or pole for $f$ with multiplicity $k$, thus an isolated zero for $\mathcal{N}^{\perp}(f)$. In a neighborhood of $z_{0}$, system $\mathcal{N}^{\perp}(f)$ is linearly approximated by:
 \begin{equation*}
  \dfrac{dz}{dt} = \dfrac{-i (z-z_{0})}{k}.
\end{equation*}
Thus $z_{0}$ is a non-degenerate equilibrium for $\mathcal{N}^{\perp}(f)$ with characteristic roots  $\pm \frac{i}{k}$. 
By the first assertion in the lemma, a regular integral curve through a point $\check{z}$ close to $z_{0}$, but $\neq z_{0}$, cannot end up at, or leave from $z_{0}$. Hence, this point is neither a focus, nor a centro-focus for $\mathcal{N}^{\perp}(f)$ (cf. \cite{ALGM})
and must be a center for $\mathcal{N}^{\perp}(f)$.
\end{proof}
In view of the above Assertion 1., a closed orbit for $\overline{\overline{\mathcal{N}} }^{\perp}(f)$ cannot be a limit cycle, and -by 2.- a separatrix $z^{\perp}(t)$ leaving a saddle $\sigma_{1}$, must approach a saddle $\sigma_{2}$. 
Moreover, this separatrix cannot connect $\sigma_{1}$ to itself, i.e. $\sigma_{1} \neq \sigma_{2}$. In fact, let $\sigma_{1}=\sigma_{2}$, since there holds that 
arg $h (z^{\perp}(t))=$ constant
$$
\lim_{t \downarrow 0} \text{ arg }h (z^{\perp}(t))=\text{ arg }h(\sigma_{1})     \text{ and also }     \lim_{t \uparrow 0} \text{ arg }h (z^{\perp}(t))=\text{ arg }h(\sigma_{1}),
$$
which is impossible, see Fig. \ref{Figure15N} and the Comment on Fig.\ref{Figure1}.
\begin{figure}[h]
\begin{center}
\includegraphics[scale=1.1]{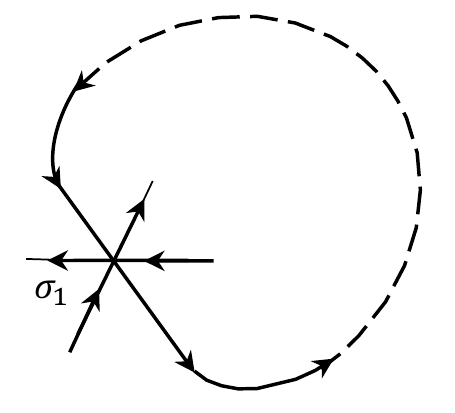}
\caption{\label{Figure15N}No ``self-connected'' $\overline{\overline{\mathcal{N}} }^{\perp}(h)$-saddles
; $\sigma_{1}$ is 2-fold; $h(z)=\exp (-i \log (f(z)))$.}
\end{center}
\end{figure}

Note that -when introducing rotated Newton flows- no additional restrictions were laid upon the function $f$. But now, we return to the case of {\it non-degenerate} functions $f$.
Then $\overline{\overline{\mathcal{N}} }^{\perp}(f)$ has 2$r$ simple saddles (corresponding to the critical points for $f$) with altogether $4r$ separatrices, connecting {\it different } saddles. So, we may introduce:

\newpage
\begin{definition}
\label{D6.17N}
The graph $\mathcal{G}^{\perp}(f), f \in \check{E}_{r},$
on the torus $T$ is given by:
\begin{itemize}
\item Vertices are the 2$r$ critical points for $f$ (as saddles for $\overline{\overline{\mathcal{N}} }^{\perp}(f)$) on $T$.
\item Edges are the 4$r$ separatrices at the critical points for $f$ (as $\overline{\overline{\mathcal{N}} }^{\perp}(f)$-saddles) on $T$.
\end{itemize}
\end{definition}
Since all zeros and poles for $f$ are centers for $\overline{\overline{\mathcal{N}} }^{\perp}(f)$, each $\mathcal{G }^{\perp}(f)$-face contains only one zero or one pole for $f$. Moreover, the graph $\mathcal{G}^{\perp}(f)$ is cellularly embedded. Hence, the graph $\mathcal{G}^{\perp}(f)$ has 2$r$ faces.

Let $c$ be an arbitrary, strictly positive real number and put $L_{c}=\{z \mid |f(z)|=c\}$.
\begin{lemma}
\label{L6.18N}
Then there holds:
\begin{enumerate}
\item[(1)] The level set $L_{c}$ is a regular curve in $\mathbb{R}^{2}$ (i.e., grad $|f(z)| \neq 0$ for all $z \in L_{c}$)
                             if and only if $L_{c}$ contains no critical points for $f$.
\item[(2)]The graph $\mathcal{G}^{\perp}(f), f \in \check{E}_{r},$ is connected. In particular, $f(z)$ admits the same
                              absolute value at all critical points $z$.
\end{enumerate}
\end{lemma}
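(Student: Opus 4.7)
For part (1), I plan to compute the Euclidean gradient of $|f|^2 = f\bar f$ in complex coordinates. Since $f$ is holomorphic, the Wirtinger derivative gives $\partial_z(f\bar f) = f'(z)\overline{f(z)}$, and a real-valued smooth function on $\mathbb{C}$ has vanishing Euclidean gradient iff its $\partial_z$-derivative vanishes. Hence $\nabla|f|^2(z) = 0$ iff $f(z)f'(z) = 0$, and on $L_c$ with $c > 0$ this reduces to $f'(z) = 0$, i.e., $z$ is a critical point of $f$. Since $|f| = c > 0$ on $L_c$, the identity $\nabla|f| = (2|f|)^{-1}\nabla|f|^2$ transfers the equivalence directly to $|f|$, yielding (1).

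For (2), the plan splits into two steps. First, I would show that $|f|$ is constant on each connected component of $\mathcal{G}^\perp(f)$. By Assertion~1 of Lemma~\ref{L6.16N}, $|f|$ is constant along every non-equilibrium trajectory of $\mathcal{N}^\perp(f)$ and therefore of $\overline{\overline{\mathcal{N}}}^\perp(f)$. Each edge of $\mathcal{G}^\perp(f)$ is a separatrix whose two ends approach critical-point vertices, so by continuity the constant modulus along that edge agrees with $|f|$ at either endpoint. Two edges sharing a vertex therefore assign the same value of $|f|$ to that vertex, and walking along any path within a component propagates this value, so $|f|$ is a constant on each component. Second, I would show that $\mathcal{G}^\perp(f)$ is connected. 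Here I invoke the cellular embedding asserted just above the lemma: each face $F$ is an open disk bounded by a single closed walk, which—being connected—lies in a single connected component, call it $G_{i(F)}$. If there were $k \geq 2$ components $G_1,\ldots,G_k$, define
\[
A_i \;=\; G_i \;\cup\; \bigcup\bigl\{F\ \text{open face}\,:\,i(F)=i\bigr\}.
\]
The $A_i$ are pairwise disjoint and exhaust $T$; each is open because at any vertex or interior edge-point of $G_i$ every locally incident face has its whole boundary walk in $G_i$ and so lies in $A_i$. This expresses $T$ as a disjoint union of at least two nonempty open sets, contradicting the connectedness of the torus, so $k=1$.

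Combining the two steps gives that $|f|$ is constant on all of $\mathcal{G}^\perp(f)$, and in particular at every critical point. The main obstacle is the topological argument for connectedness: it rests squarely on the cellular embedding of $\mathcal{G}^\perp(f)$, which itself relies on the local phase portrait around centers (nested periodic orbits parametrising each face) and on the earlier exclusion of self-connected saddles. Once the cellular embedding is granted, the clopen-decomposition step above is essentially mechanical, and the two halves of (2) fit together at once.
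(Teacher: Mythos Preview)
Your argument is correct. Part~(1) is essentially the paper's proof: the paper writes only ``Use the Cauchy--Riemann equations,'' and your Wirtinger computation $\partial_z(f\bar f)=f'\bar f$ is exactly that calculation in complex form.

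For part~(2) you take a genuinely different route. The paper's proof is a single line: ``Apply Euler's formula for toroidal graphs.'' The intended argument is that the text just before the lemma has already established that $\mathcal{G}^{\perp}(f)$ is cellularly embedded with $2r$ vertices, $4r$ edges and $2r$ faces, so $V-E+F=0=\chi(T)$; the version of Euler's formula in the cited reference then forces connectedness. You instead bypass Euler entirely and give a direct topological argument: since every face is a disk, its (connected) boundary walk lies in a single component of the graph, and grouping each face with the component carrying its boundary produces a clopen partition of $T$, which must be trivial. Both arguments rest on the same input (the cellular embedding asserted before the lemma), but yours is more self-contained and makes explicit \emph{why} cellularity forces connectedness, whereas the paper outsources this to a textbook citation. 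Your derivation of the ``same absolute value'' claim from constancy of $|f|$ along separatrices plus connectedness is exactly what the paper's ``In particular'' is pointing to.
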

\begin{proof}
(1): Use the Cauchy-Riemann equations.\\
(2): Apply Euler's formula for toroidal graphs. (cf. \cite{Gib})
\end{proof}
\noindent
We orient the edges of $\mathcal{G}^{\perp}(f)$  according to their orientation as  $\overline{\overline{\mathcal{N}} }^{\perp}(f)$-trajectories. 
Let $A_{i}$  and $B_{j}$ be open subsets of $\mathbb{C}$, corresponding to the (open) faces of $\mathcal{G}^{\perp}(f)$ that are determined by the zero $a_{i}$, respectively the pole $b_{j}$, for $f$.
Hence, the boundaries of $A_{i}$ are clockwise oriented, but those of $B_{j}$ anti-clockwise.  Since $\overline{\overline{\mathcal{N}} }^{\perp}(\frac{1}{f})= -\overline{\overline{\mathcal{N}} }^{\perp}(f)$ we have: reversing the orientations in $\mathcal{G}^{\perp}(f)$ turns this graph into $\mathcal{G}^{\perp}(\frac{1}{f})$ and thus, by Lemma \ref{L6.18N} (2):  $|f(z)|=1$ on $\mathcal{G}^{\perp}(f)$. See Fig. \ref{Figure16N}
for (parts of) the graphs $\mathcal{G}^{\perp}(f)$, $\mathcal{G}(f)$, $\mathcal{G}^{*}(f)$ and $\mathcal{G}^{\perp}(\frac{1}{f})$, $\mathcal{G}(\frac{1}{f})$, $\mathcal{G}^{*}(\frac{1}{f})$.
A canonical $\overline{\overline{\mathcal{N}} }(f)$-region, with $[a_{i}]$,$[b_{j}]$ in opposite position, and the saddles $\sigma, \sigma'$ consecutive w.r.t. the orientation of $A_{i}$(or $B_{j}$), will be denoted by $R_{ij}(\sigma, \sigma')$
and is contained in $F_{a_{i}}(\frac{1}{f}) \cap F_{b_{j}}(f)$.
Note that, in general, this intersection contains more canonical regions of type $R_{ij}(\cdot, \cdot)$. But even so, these regions  are separated by canonical regions, not of this type; compare Remark \ref{R6.8N}.
In view of Subsection 1.1
and Lemma \ref{L6.16N} (1): Under $f$
the 
net of $\overline{\overline{\mathcal{N}}}(f)$- and $\overline{\overline{\mathcal{N}}}(f)^{\perp}$-trajectories on $R_{ij}(\sigma, \sigma')$ is homeomorphically 
mapped onto 
a polar net in a sector of the $u+iv$-plane ($u$=Re($f$), $v$=Im($f$)), namely
$$
s_{i,j}(\sigma, \sigma')=\{ (u,v) \mid 0 < u^{2}+v^{2} < \infty , \text{arg}f(\sigma) < \arctan (\frac{v}{u}) < \text{arg}f(\sigma')\}.
$$
Analogously, $\frac{1}{f}$ maps the net of $\overline{\overline{\mathcal{N}}}(f)$- and $\overline{\overline{\mathcal{N}}}(f)^{\perp}$-trajectories on $R_{ji}(\sigma, \sigma')$ 
onto 
a polar net in a sector of the $U+iV$-plane ($U$=Re($\frac{1}{f}$), $V$=Im($\frac{1}{f}$)), namely
$$
S_{i,j}(\sigma, \sigma')=\{ (U,V) \mid 0 < U^{2}+V^{2} < \infty , -\text{arg}f(\sigma) < \arctan (\frac{V}{U}) < -\text{arg}f(\sigma')\}.
$$
So, the polar nets on $s_{i,j}(\sigma, \sigma')$ and $S_{i,j}(\sigma, \sigma')$ correspond under the inversion\footnote{\label{FN26} Here we use that in a canonical region the angles at the zero and the pole are equal.}: $$U=\frac{u}{u^{2}+v^{2}}, V=\frac{v}{u^{2}+v^{2}}.$$

\begin{figure}[h]
\begin{center}
\includegraphics[scale=1.5]{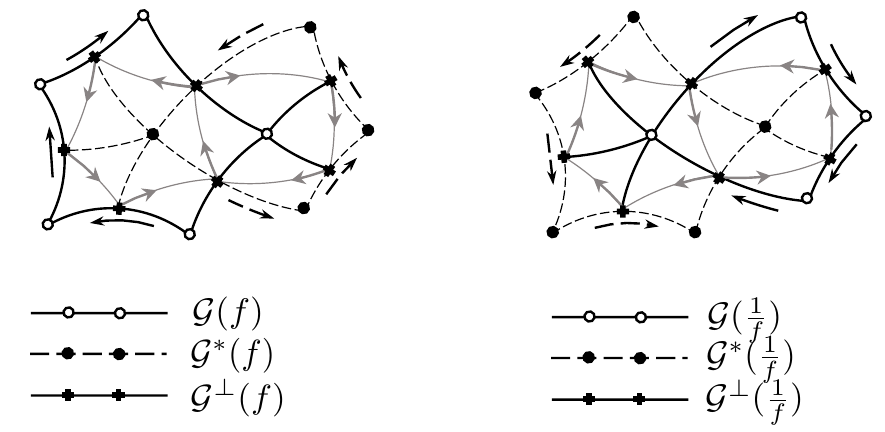}
\caption{\label{Figure16N}The graphs $\mathcal{G}(\cdot)$, $\mathcal{G}^{*}(\cdot )$ and $\mathcal{G}^{\perp}(\cdot)$ for $f$ and $\frac{1}{f}$ in $\tilde{E}_{r}$.}
\end{center}
\end{figure}

\noindent
Next we turn to the relationship between Newton flows and steady streams.

\noindent
\begin{remark} {\it Newton flows as steady streams.}\\
\label{R6.20}
\noindent
For $f \in \check{E}_{r}$, we consider the planar {\it steady stream} (\cite{M1}) with complex potential $w(z)=-\log f(z)$
, potential function $\Phi(x,y)=-\log |f(z)|$ and stream function $\psi(x,y)=- \text{ arg}f(z)$, where $x=\text{Re}(z), y=\text{Im}(z).$ Then the {\it equipotential lines} are given by $ - \log |f(z)|=$ constant, the {\it stream lines} by  $-\text{ arg}f(z)=$ constant and the {\it velocity field }$V(z)(=\text{ grad} \Phi)$ by the complex conjugate of $w'(z)$, i.e.
$$
V(z)=\frac{|w'(z)|^{2}}{w'(z)}=\frac{-|w'(z)|^{2}f(z)}{f'(z)}(=|w'(z)|^{2} \mathcal{N}(f)).
$$
Moreover, the zeros (poles) for $f$ are just the sinks (sources) of strength 1, whereas the critical points for $f$ are the 1-fold stagnation points  of the stream, compare also \cite{HT1}. So, the ``orthogonal net of the stream- and equipotential-lines'' of the planar steady stream is a combination of the phase portraits of $\overline{\mathcal{N} }(f)$ and $\overline{\mathcal{N} }^{\perp}(f)$, see Fig. \ref{Figure15NN}.

\begin{figure}[h]
\begin{center}
\includegraphics[scale=1.2]{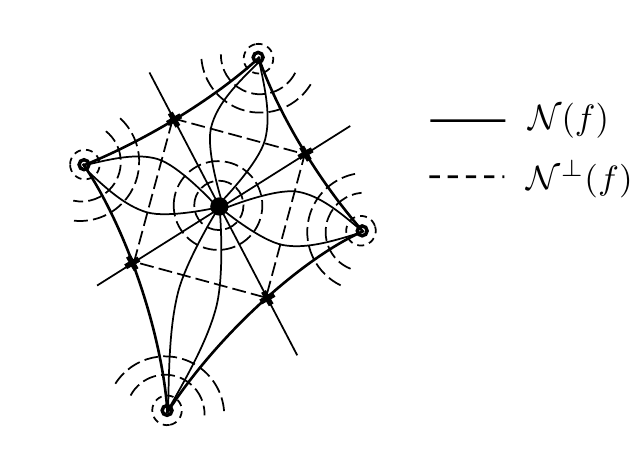}
\caption{\label{Figure15NN}The steady stream $w(z)=-\log f(z), f \in \check{E}_{r}.$}
\end{center}
\end{figure}

Hence we may interprete the pair ($\overline{\overline{\mathcal{N}} }(f)$, $\overline{\overline{\mathcal{N}} }^{\perp}(f)$) as a toroidal desingularized version of our planar steady stream.
\noindent
Finally, we clarify the ``steady stream character'' of the structurally stable elliptic Newton flows from the point of view of the Riemann surface $T$. 

\noindent 
Firstly, we note that the polar net on open (!) sectors as $s_{i,j}(\sigma, \sigma')$ and $S_{i,j}(\sigma, \sigma')$ are just the stream and equipotential lines of the steady stream with complex potential $-\log(u+iv)$, resp. $-\log(U+iV)$. In particular , these stream and equipotential lines exhibit the phase portraits of resp. the flows $\mathcal{N}(u+iv)(=-(u+iv))$, $\mathcal{N}(u+iv)^{\perp}(=-i(u+iv))$, and $\mathcal{N}(U+iV)(=-(U+iV))$, $\mathcal{N}(U+iV)^{\perp}(=-i(U+iV))$        
on $s_{i,j}(\sigma, \sigma')$ and $S_{i,j}(\sigma, \sigma')$ respectively. Deleting from $T$ all zeros, poles and critical points for $f$, we obtain ``the reduced torus''
$\check{T}$: an open submanifold of $T$.

\noindent
Now the collection
$$
\{ F_{a_{i}}(\frac{1}{f}) \backslash [a_{i}], F_{b_{j}}(f) \backslash [b_{j}]; i,j= 1, \cdots r \}
$$
exhibits a covering of $\check{T}$ with open neighborhoods.
Apparently, only in the case of pairs $(F_{a_{i}}(\frac{1}{f}) \backslash [a_{i}], F_{b_{j}}(f) \backslash [b_{j}])$
a non-empty intersection is possible. Even so, the intersection $$F_{a_{i}}(\frac{1}{f}) \backslash [a_{i}] \cap F_{b_{j}}(f) \backslash [b_{j}]$$
consists of the disjoint union of sets of the type $R_{ij}(\cdot, \cdot)$, say $R_{ij}^{1}, \cdots ,R_{ij}^{s}$.
 (Note that $[a_{i}]$ occurs in $w_{b_{j}}$ as many times as $[b_{j}]$ occurs in $w_{a_{i}}$). This turns our covering into an atlas for $\check{T}$ with smooth (even complex analytic) coordinate transformations, induced by the inversion $(u+iv)) \leftrightarrow (1/(u+iv))=U+iV$.  
With aid of this atlas, we may interprete  $\overline{\overline{\mathcal{N}} }(f)$ and $\overline{\overline{\mathcal{N}} }^{\perp}(f)$ on each canonical region  as the Ò pull backÓ of theÓ most simple\footnote{\label{FN27}On the sectors $s_{i,j}(\sigma, \sigma')$ resp. $S_{i,j}(\sigma, \sigma')$ the flows $\mathcal{N}(u+iv)$, resp. $\mathcal{N}(U+iV)$ are parts of North-South flows. (cf. 
\cite{HT1})}
planar flows 
$\mathcal{N}(u+iv), \mathcal{N}^{\perp}(u+iv)$, and $\mathcal{N}(U+iV), \mathcal{N}^{\perp}(U+iV)$  
on the various sectors $s_{i,j}(\cdot , \cdot)$ and $S_{i,j}(\cdot, \cdot)$
respectively. Glueing the canonical regions $R_{ij}(\cdot, \cdot)$ along the $\overline{\overline{\mathcal{N}} }(f)$-trajectories in their common boundaries, we  obtain the restrictions to $\check{T}$ of our original (rotated) Newton flows. In particular, the flows $\mathcal{N}(u+iv)(=-(u+iv))$ and $\mathcal{N}(U+iV)(=-(U+iV))$ lead to an analytic function on $\check{T}$, namely the restriction $f | \check{T}$, with as isolated singularities the zeros, poles and critical points for $f$. 
By continuous extension to this singularities, we find the original flows $\overline{\overline{\mathcal{N}} }(f)$ and $\overline{\overline{\mathcal{N}} }^{\perp}(f)$. 
For an illustration, see Fig. \ref{Figure18N}, \ref{Figure19N}.
\end{remark}

\newpage

\begin{figure}[h!]
\begin{center}
\includegraphics[scale=0.6]{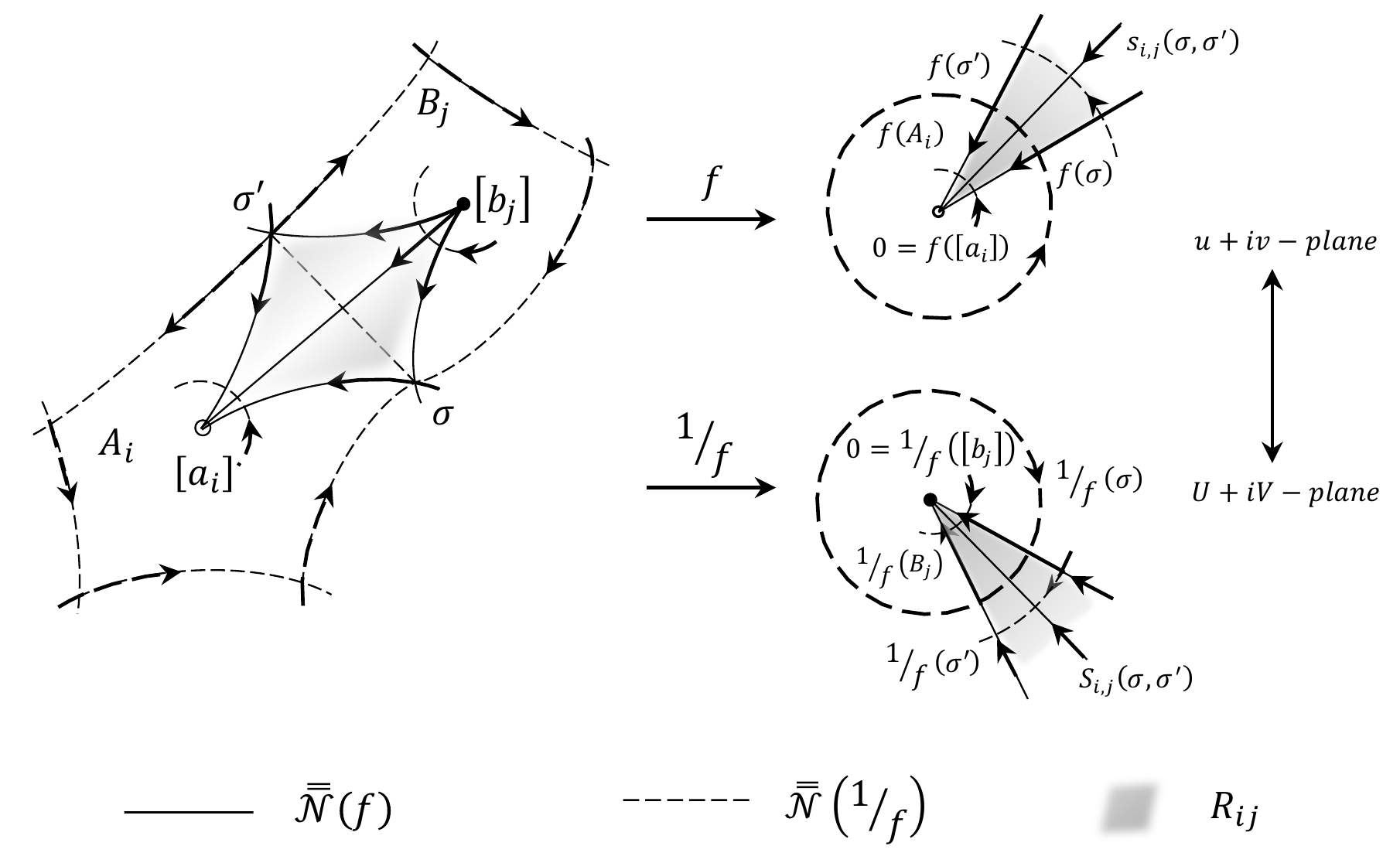}
\caption{\label{Figure18N}
The canonical regions $R_{ij}$, and the sectors $s_{i,j}(\sigma, \sigma')$ and $S_{i,j}(\sigma, \sigma')$.}
\end{center}
\end{figure}

\begin{figure}[h!]
\begin{center}
\includegraphics[scale=0.5]{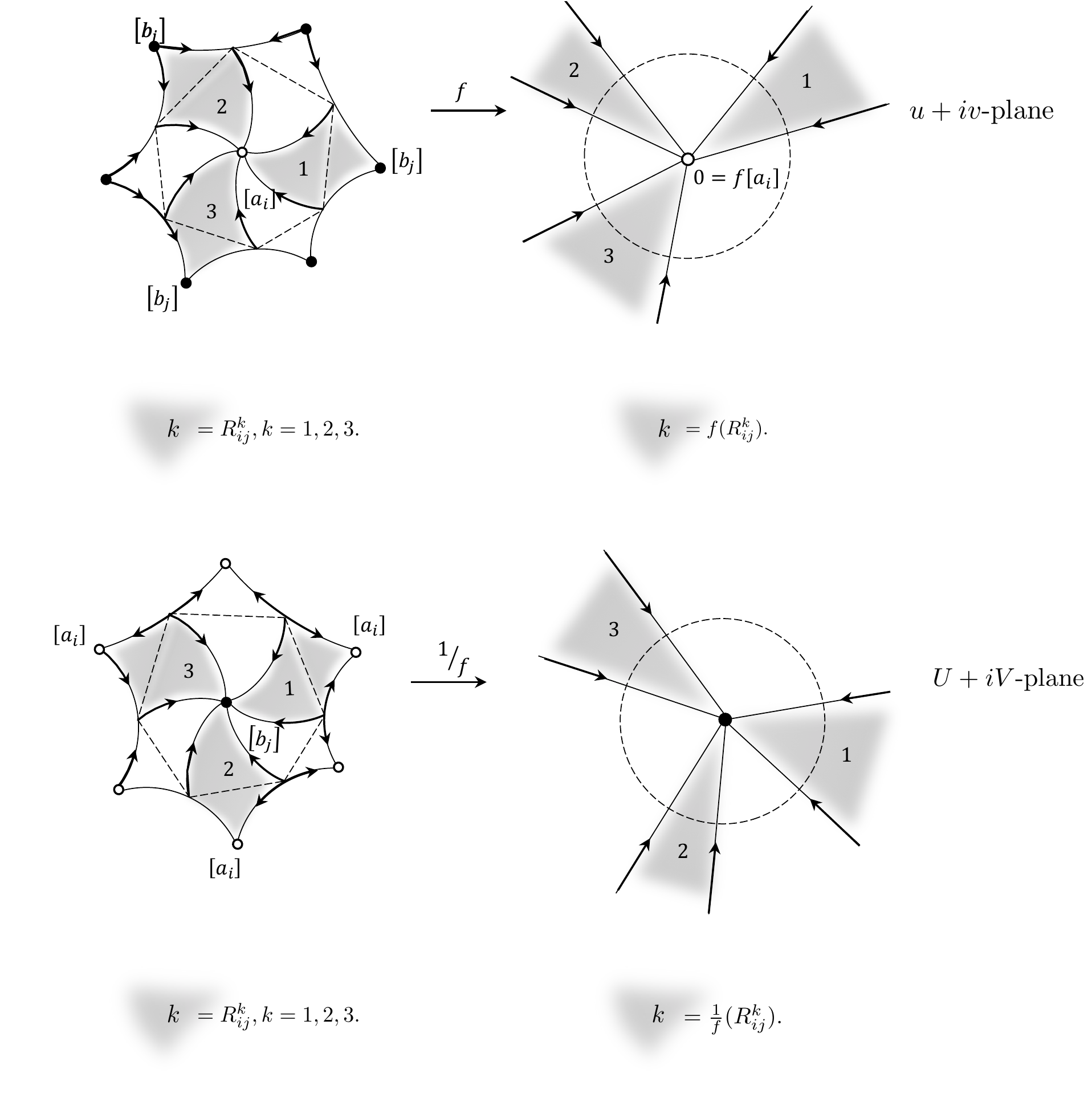}
\caption{\label{Figure19N}
$F_{a_{i}}(\frac{1}{f}) \backslash [a_{i}] \cap F_{b_{j}}(f) \backslash [b_{j}] $ and its images under $f$ and $\frac{1}{f}$ ($s=3$).}
\end{center}
\end{figure}

\newpage
\section{Newton graphs}
\label{Nsec7}

\noindent
Throughout this section, the connected graph $\mathcal{G}$ is a cellular embedding in $T$, seen as a compact, orientable, Hausdorff topological space, of 
an abstract connected {\it multigraph}  {\bf G} (i.e., no loops)
with $r$ vertices, 2$r$ edges
($r \geqslant 2$); $r=${\it order} $\mathcal{G}$.\\

\noindent
The forthcoming analysis strongly relies on some concepts from classical graph theory on surfaces, which- in order to fix terminology-will be briefly reviewed\footnote{\label{7FN1} Again we follow the treatise \cite{MoTh} closely. Note however, that in \cite{MoTh} a multigraph may exhibit loops, whereas in our case this possibility for $\mathcal{G}$ is ruled out.}:\\

\noindent
{\large{\bf 3.1. Cellularity; geometric duals}}\\

\noindent
Since $\mathcal{G}$ is cellularly embedded, we may consider (cf. \cite{MoTh}) the {\it rotation system} $\Pi$ for $\mathcal{G}$:
$$
\Pi=\{ \pi_{v} \!\mid \! \text{all vertices $v$ in {\bf G}} \},
$$
where the local {\it rotation $\pi_{v}$} at $v$ is the cyclic permutation of the edges incident with $v$ such that $\pi_{v}(e)$ is the successor of $e$ in the anti-clockwise ordering around $v$.\\

\noindent
If $e(=v'v'')$ stands for an edge, with end vertices $v'$ and $v''$, we define a $\Pi$-{\it walk} (facial walk\footnote{\label{7FN2}Compare the facial walk $w_{b_{j}}$ in Section \ref{sec6}.}), say $w$, on $\mathcal{G}$
as follows:\\
\\ 
\underline{The face traversal procedure.}
\\
Consider an edge $e_{1}=(v_{1}v_{2})$ and the closed walk\footnote{\label{7FN3}We shall not distinguish between $w$ and its cyclic shifts.}
$w=v_{1}e_{1}v_{2}e_{2}v_{3} \cdots v_{k}e_{k}v_{1}$, which is determined by the requirement that, for $i=1, \cdots , \ell,$ we have $\pi_{v_{i+1}}(e_{i})=e_{i+1}$, where $e_{\ell +1}=e_{1}$ and $\ell$ is minimal.\\
\\ 
\noindent
Apparently, such ``minimal'' $\ell$ exists since ${\bf G}$ is finite. Note that each edge occurs {\it either once} in two different $\Pi$-walks, or {\it twice} (with opposites orientations) in {\it only one $\Pi$-walk}; in particular, the first edge in the same direction which is repeated when traversing $w$, is $e_{1}$. As in the proof of Lemma \ref{L6.8N}, these $\Pi$-walks can be used to construct (patching the facial polygons along identically labelled sides) a
surface $S$ and in $S$, a so-called {\it 2-cell} 
embedded graph  
with faces 
determined by the facial polygons. By Euler's formula (cf. \cite{Gib}) there are $r$ facial walks. So, $S$ is homeomorphic to $T$ and the 2-cell embedded graph is isomorphic to $\mathcal{G}$. By the {\it Heffter-Edmonds-Ringel rotation principle}, the graph $\mathcal{G}$ is
uniquely determined up to isomorphism
by its rotation system.
We say: $\mathcal{G}$ is generated by $\Pi$.
\\
\noindent
From now on, we suppress the role of the underlying abstract graph ${\bf G}$ and will not distinguish between the vertices of $\mathcal{G}$ and those of ${\bf G}$. Occasionally, $\mathcal{G}$ will be referred to as to the pair ($\mathcal{G}$, $\Pi$). The $\mathcal{G}$-faces (as well as the corresponding facial polygons) are denoted by $F_{j}$; their boundaries (as well as the corresponding $\Pi$-walks) by $\partial F_{j}, j=1, \cdots ,r.$ We denote the sets of all vertices, edges and faces of $\mathcal{G}$ by $V(\mathcal{G}), E(\mathcal{G})$ and $F(\mathcal{G})$ respectively. \\
\\
\noindent
The embedding of $\mathcal{G}$ into the orientable surface $T$ induces an anti-clockwise orientation on the edges around each vertex $v$. In the sequel we assume that the local rotations $\pi_{v}$ are endowed with this orientation (so that the inverse permutation $\pi_{v}^{-1}$ are clockwise).\\
\\
\noindent
Given a cellularly embedded toroidal ($\mathcal{G}$, $\Pi$), the abstract graph ${\bf G}^{*}$ is defined as follows:
\begin{itemize}
\item The $r$ vertices $\{ v^{*}\}$ are represented by the $\Pi$-walks in $\mathcal{G}$,
\item Two vertices are connected by an edge $e^{*}$ iff the representing $\Pi$-walks share an edge $e$.
\end{itemize}
Hence, 
between the $\mathcal{G}$-edges and ${\bf G}^{*}$-edges, there is a bijective correspondence: $e \leftrightarrow e^{*}$.\\
In particular, 
${\bf G}^{*}$ has $2r$ edges, and an edge $e^{*}$ is a loop\footnote{\label{7FN4} In contradistinction to our assumption on ${\bf G}$, the graph ${\bf G}^{*}$ may admit loops.} iff $e$ shows up twice in a $\Pi$-walk of $\mathcal{G}$.\\
\\
\noindent
The graph ${\bf G}^{*}$ admits a 2-cell embedding in $T$: the {\it (geometric) dual } ($\mathcal{G}^{*}$, $\Pi^{*}$). In fact, if the vertex $v^{*}$ in ($\mathcal{G}^{*}$, $\Pi^{*}$) is represented\footnote{\label{7FN5} We say: $v^{*}$ is  ``'located' in the $\mathcal{G}$-face, determined by the $\Pi$-walk ($e_{1}-\cdots -e_{\ell}$).} by the $\Pi$-walk ($e_{1}-\cdots -e_{l}$), then the cyclic permutation on the ${\bf G}^{*}$-edges incident with $v^{*}$, say $\pi_{v^{*}}^{*}$, is defined by $\pi_{v^{*}}^{*}=(e_{1}^{*}-\cdots -e_{\ell}^{*})$. A $\Pi^{*}$-walk of length $\ell'$ corresponds to precisely one $\mathcal{G}$-vertex  of degree $\ell'$: compare Fig.\ref{Figure13}, where $\mathcal{G}=\mathcal{G}(f)$ and $\mathcal{G}^{*}$=$\mathcal{G}^{*}(f)$. The anti-clockwise orientation of the local rotation systems in $\mathcal{G}$
induces a clockwise orientation on the $\Pi$-walk 
in $\mathcal{G}$
and thus a clockwise orientation on the rotation systems in $\mathcal{G}^{*}$
This results -by the face traversal procedure-into an anti-clockwise orientation on the $\Pi^{*}$-walks in $\mathcal{G}^{*}$. \\
\noindent
By $-\mathcal{G}$ ($-\mathcal{G}^{*}$) we mean  $\mathcal{G}$ ($\mathcal{G}^{*}$) with the anti-clockwise (clockwise)orientation; compare $-\mathcal{G}(f)$ and $-\mathcal{G}^{*}(f)$ in Section 2.
It follows that $\text{($\mathcal{G}^{*}$, $\Pi^{*}$)}^{*}$=($\mathcal{G}$, $\Pi$).
\noindent
Note that two cellularly embedded graphs in $T$ are isomorphic, then also their duals.\\

\noindent
{\large {\bf 3.2. The E(Euler)-property}}\\

\noindent
In contradistinction to the case of facial walks in $\mathcal{G}(f), f \in \tilde{E}_{r},$ see Lemma \ref{L6.7N}, a $\Pi$-walk in $\mathcal{G}$ is-in general- {\it not} an Euler-trail. So, we need an additional condition:

\begin{definition}
\label{ND7.2}
($\mathcal{G}$, $\Pi$) has the {\it E(Euler)-property} if every $\Pi$-walk is Eulerian.
\end{definition}

For an example of a second order graph ($\mathcal{G}$, $\Pi$) that has the {\it E-property}, see Fig. \ref{Figure15}-(i). This it not so for the third order graphs ($\mathcal{G}$, $\Pi$) in Fig. \ref{Figure15}-(ii), (iii), whereas the graph in Fig. \ref{Figure15} (iv) does not even fulfil the initial conditions laid upon $\mathcal{G}$ (because there are 3 vertices and only 5 edges). Note, however, that also in the latter case the Euler Characteristic vanishes, so that this multigraph is toroidal as well.

\begin{figure}[h]
\begin{center}
\includegraphics[scale=0.6]{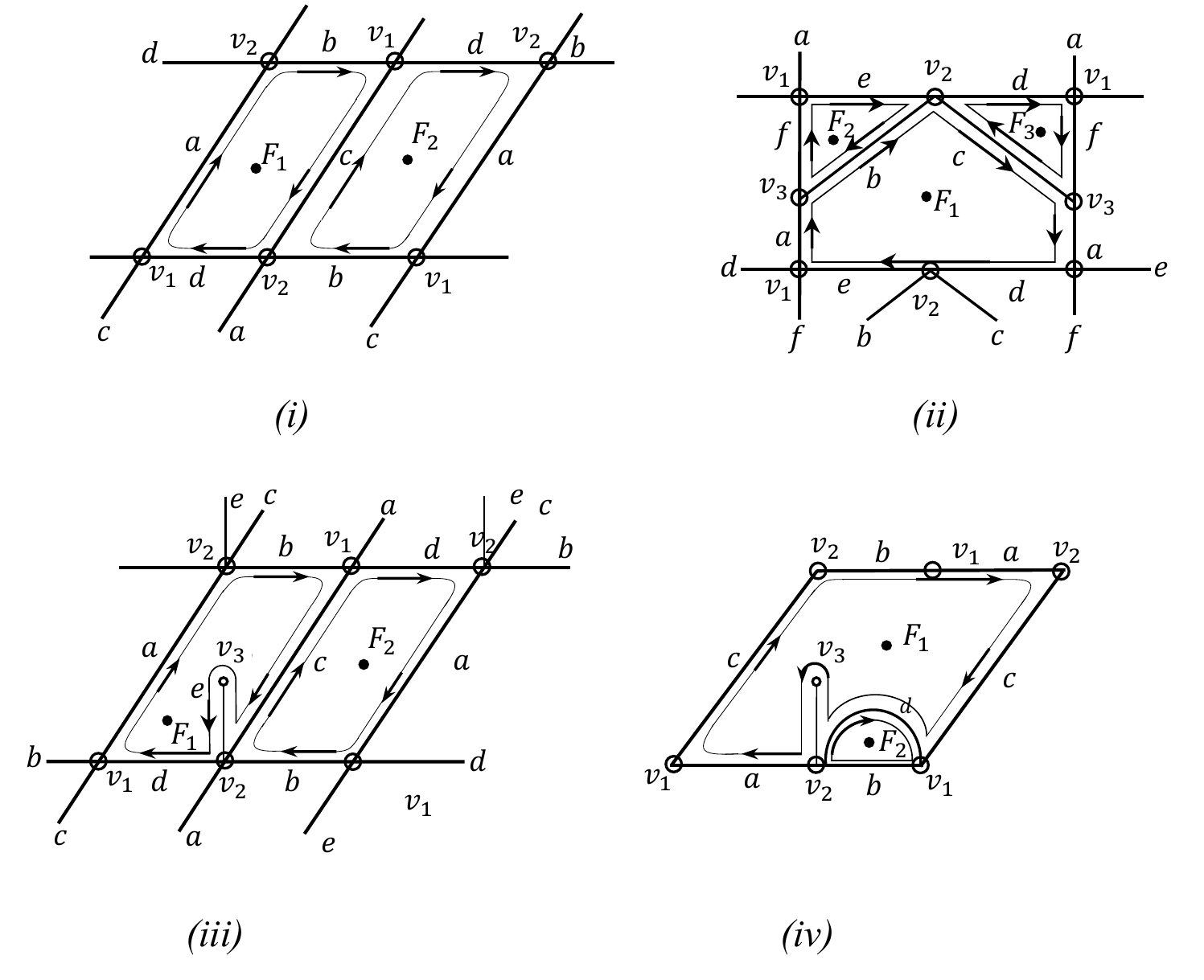}
\caption{\label{Figure15}Four multigraphs, cellularly embedded in $T$} 
\end{center}
\end{figure}

\begin{lemma}
\label{NL7.3}
If $(\mathcal{G}$, $\Pi )$ has the {\it E-property}, then this is also true for $(\mathcal{G}^{*}$, $\Pi^{*})$.
\end{lemma}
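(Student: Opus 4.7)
The plan is to apply the face-traversal procedure to $(\mathcal{G}^*, \Pi^*)$ directly, exploiting the explicit description of its $\Pi^*$-walks from Subsection~3.1 and the standing assumption of Section~3 that the multigraph ${\bf G}$ is loopless. The underlying observation, which the face-traversal procedure makes transparent, is that the E-property for a cellularly embedded graph is equivalent to its geometric dual being loopless: an edge is traversed twice in a single facial walk precisely when both its sides lie in the same face, i.e.\ exactly when its dual edge is a loop. Since $(\mathcal{G}^*)^*\cong(\mathcal{G},\Pi)$, the E-property we want for $(\mathcal{G}^*,\Pi^*)$ is equivalent to loopless-ness of $\mathcal{G}$, a condition built into the running setup.

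Concretely, I would recall from Subsection~3.1 that every $\Pi^*$-walk in $\mathcal{G}^*$ corresponds to exactly one vertex $v$ of $\mathcal{G}$, has length $\deg_{\mathcal{G}}(v)$, and has successive edges $e_1^*,\ldots,e_\ell^*$, the duals of the edges $e_1,\ldots,e_\ell$ incident with $v$ in the cyclic order of $\pi_v$. Because ${\bf G}$ is loopless, each $e_i$ meets $v$ at exactly one endpoint, so each $e_i$ appears in $\pi_v$ exactly once; therefore the $e_1^*,\ldots,e_\ell^*$ are pairwise distinct and the $\Pi^*$-walk traverses each edge of the associated boundary $\partial F^*_v$ exactly once.

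For the vertex-coverage condition of an Euler trail, one notes that the vertices of $\partial F^*_v$ are precisely those dual vertices representing the $\mathcal{G}$-faces incident with $v$, and the $\Pi^*$-walk visits each of them as it passes between consecutive edges in the rotation $\pi_v$. Hence every $\Pi^*$-walk is an Euler trail on its boundary subgraph, which is exactly the E-property for $(\mathcal{G}^*,\Pi^*)$. The only point requiring care is the clean identification of the boundary edges and vertices of a $\mathcal{G}^*$-face with the local rotation data around the corresponding $\mathcal{G}$-vertex; the hypothesis of the lemma feeds in through the symmetric observation above, translating the loopless-ness assumption on ${\bf G}$ into the sought Eulerian property on the dual side.
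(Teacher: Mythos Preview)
Your proposal is correct and follows essentially the same approach as the paper: both arguments rest on the equivalence ``E-property for a cellularly embedded graph $\Leftrightarrow$ its geometric dual is loopless,'' combined with $(\mathcal{G}^*)^*=\mathcal{G}$ and the standing assumption that $\mathbf{G}$ is loopless. The paper's proof states this in one line, while you unpack the equivalence explicitly via the face-traversal description of the $\Pi^*$-walks; the underlying logic is identical.
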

\begin{proof}
Recall that the conditions ``{\it E-property} holds for $\mathcal{G}$'' and ``non-occurrence of loops in $\mathcal{G}^{*}$'' are equivalent and 
apply $(\mathcal{G}^{*})^{*}$=$\mathcal{G}$.
\end{proof}

\noindent
From now, on we assume that both $\mathcal{G}$
and $\mathcal{G}^{*}$
are multigraphs and fulfil the {\it E-property}. In particular, 
each 
edge in these graphs is
adjacent to {\it two different} 
faces.\\

\noindent
Let $v$ be an arbitrary vertex in $\mathcal{G}$, contained in the boundary $\partial F$ of a face $F$ and  $e_{1}ve_{2}$  a subwalk of the $\Pi$-walk $w_{F}$. The {\it different} edges  $e_{1}$, $e_{2}$ are consecutive w.r.t. the ({\it clockwise}) orientation of $w_{F}$. The {\it facial local sector} of $F$ at $v$, spanned by the ordered pair ($e_{1}$, $e_{2}$), is referred to as to a {\it $F$-sector at} $v$. Note that if $v$ occurs more than once in $w_{F}$ , two $F$-sectors at $v$ cannot share an edge (because in that case the common edge would show up twice in $w_{F}$). 
Hence,  $F$-sectors at $v$ must be separated by facial sectors at $v$ that do not belong to $F$. So, if $e_{1}ve_{2}$  and $e^{'}_{1}ve^{'}_{2}$  
are subwalks of $w_{F}$, spanning two facial $F$-sectors at $v$, then $e_{1}$, $e_{2}$, $e^{'}_{1}$ and  $e^{'}_{2}$ must be different. Thus each vertex in $\partial F$ has even degree.

Apparently, the number of all facial sectors at $v$ equals the degree of $v$, and in $\mathcal{G}$ there are altogether $\delta_{1}+ \cdots + \delta_{r}(=4r)$
facial sectors, where the $\delta_{i}$'s
stand for the degrees of the vertices in $\mathcal{G}$. \\

\noindent
Similarly, there are $\delta^{*}_{1}+ \cdots + \delta^{*}_{r}(=4r)$
facial sectors in $\mathcal{G}^{*}$ with the  $\delta^{*}_{j}$'s the degrees of the $\mathcal{G}^{*}$-vertices.\\

\noindent
We write $F=F_{v^{*}}$, where $v^{*}$ is the $\mathcal{G}^{*}$-vertex defined by $F$.
So, $w_{F}=w_{F_{v^{*}}}$. Analogously, $F^{*}_{v}$ stands for the $\mathcal{G}^{*}$-face determined by $v$. Then 
$e^{*}_{2}v^{*}e^{*}_{1}$ is a subwalk of $w_{F^{*}_{v}}$ and the {\it different} edges $e^{*}_{1}, e^{*}_{2}$ are consecutive w.r.t. the {\it anti-clockwise} orientation of this facial walk. We say that the $F_{v^{*}}$-sector at $v$, spanned by the pair $(e_{1} ,e_{2})$ and the $F^{*}_{v}$-sector at $v^{*}$ spanned by $(e^{*}_{1} ,e^{*}_{2})$ are in {\it opposite position}; see Fig. \ref{Figure**}. Altogether there are $4r$ of such (ordered) pairs of $\mathcal{G}$-, $\mathcal{G}^{*}$-vertices. Note that if $v$ occurs $p$ times in $w_{F_{v^{*}}}$, then $v^{*}$ shows up also $p$ times in $w_{F^{*}_{v}}$.\\

\begin{figure}[h]
\begin{center}
\includegraphics[scale=0.8]{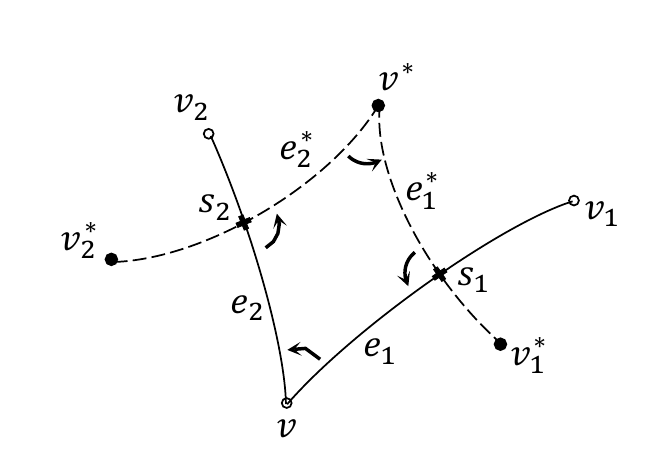}
\caption{\label{Figure**} Pairs of facial sectors in opposite position.}
\end{center}
\end{figure}

\noindent
The next step is to introduce the analogon of the {\it common refinement} $\mathcal{G}(f) \wedge \mathcal{G}^{*}(f)$.\\

\noindent
To this aim:

\begin{definition}
\label{ND7.4}
The abstract graph $\mathbb{P}(\mathcal{G})$ is given as follows:
\begin{itemize}
\item 
There are $4r$ vertices (on three levels) represented by:
\begin{align*}
&\text{- the $\mathcal{G}$-vertices} &\text{[Level-1],}\\
&\text{- the pairs $s=(e,e^{*}), e \in E(\mathcal{G}), e^{*} \in E(\mathcal{G}^{*})$}&\text{[Level-2],}\\
&\text{- the $\mathcal{G}^{*}$-vertices} & \text{[Level-3].}
\end{align*}
\item
There are $8r$ edges:
\begin{align*}
&\text{- a vertex on Level-2, represented by $(e,e^{*})$, 
is connected to two different vertices }\\
&\text{on Level-1, namely the $\mathcal{G}$-vertices incident with $e$, and to two different vertices on}\\
&\text{Level-3, namely the $\mathcal{G}^{*}$-vertices incident with $e^{*}$.}\\
&\text{- vertices on Level-1 are not connected with vertices on Level-3.}
\end{align*}
\end{itemize}
\end{definition}

\noindent
$\mathbb{P}(\mathcal{G})$-vertices on the Levels-1, -3 are denoted as the corresponding $\mathcal{G}-, \mathcal{G}^{*}-$vertices. The graph $\mathbb{P}(\mathcal{G})$ is directed by the convention: vertices on Level-1 (resp. Level-3) are the end- (resp. begin-)points of its edges.

We claim the existence of a cellular embedding of
$\mathbb{P}(\mathcal{G})$ in $T$, denoted $\mathcal{G} \wedge  \mathcal{G}^{*}$, with faces determined by the $4r$ pairs of facial sectors in {\it opposite position}. In order to verify this claim, consider an arbitrary pair of such sectors, given by the subwalks $e_{1}ve_{2}$ and $e^{*}_{1}v^{*}e^{*}_{2}$ with $s_{\ell}=(e_{\ell}, e_{\ell}^{*}), \ell =1,2;$ compare Fig.\ref{Figure**}. We specify local rotation systems on $\mathbb{P}(\mathcal{G})$ at $v$ and $v^{*}$ by $\pi_{v}$ and $\pi^{*}_{v^{*}}$ respectively. The rotation systems at $s_{1}$ and $s_{2}$ are given by the cyclic permutations $(s_{1}v,s_{1}v^{*}_{1}, s_{1}v_{1}, s_{1}v^{*})$, respectively $(s_{2}v,s_{2}v^{*}, s_{2}v_{2}, s_{2}v^{*}_{2})$, where $v_{\ell}$ and $v^{*}_{\ell}$ stand for the vertices incident with $e_{\ell}$ and $e^{*}_{\ell}$ that are different from respectively $v$ and $v^{*}, \ell=1,2.$ The resulting rotation system for $\mathbb{P}(\mathcal{G})$ is called $(\Pi,\Pi^{*})$. Now starting from $vs_{2}$ we find the $(\Pi,\Pi^{*})$-walk
$(vs_{2}, s_{2}v^{*} ,v^{*}s_{1} , s_{1}v)$.

This yields a cellular embedding of $(\mathbb{P}(\mathcal{G}),(\Pi,\Pi^{*}))$ into a surface homeomorphic to $T$ (because the alternating sum of the numbers of vertices, edges and $(\Pi,\Pi^{*})$-walks in $\mathbb{P}(\mathcal{G})$ vanishes). This embedding is denoted by $\mathcal{G} \wedge  \mathcal{G}^{*}$, and can be viewed as the common refinement of $\mathcal{G} $ and $  \mathcal{G}^{*}$. Each face in $\mathcal{G} \wedge  \mathcal{G}^{*}$ is represented by a quadruple of directed edges in $\mathbb{P}(\mathcal{G})$ and is associated with exactly one vertex on Level 1, one vertex on Level 3 (in opposite position) and two vertices on Level 2. Moreover, each $\mathcal{G} $-face ($  \mathcal{G}^{*}$-face) is built up from the sets of all $\mathcal{G} \wedge  \mathcal{G}^{*}$-faces centered at a $  \mathcal{G}^{*}$-vertex ($\mathcal{G}$-vertex), ordered in accordance with the orientation of $\mathcal{G}$ ($\mathcal{G}^{*}$). This observation turns the {\it abstract} graph $\mathbb{P}(\mathcal{G})$ into a distinguished graph $\mathbb{P}^{d}(\mathcal{G})$ with only distinguished sets of {\it Type 1} (in the sense of \cite{Peix2}).

\noindent
Following Peixoto, the distinguished graph $\mathbb{P}^{d}(\mathcal{G})$ is realizable as {\it the} distinguished graph of 
a $C^{1}$-{\it structurally stable} vector field, say\footnote{\label{7FN6}Since $\mathcal{G}^{*}$ is determined by $\mathcal{G}$, we occasionally refer to $\mathcal{G}$ as to the distinguished graph of $ \mathcal{X}(\mathcal{G})$.} $ \mathcal{X}(\mathcal{G})$ on $T$, with:
\begin{itemize}
\item 
as hyperbolic attractors (repellors): the $\mathcal{G}$-vertices ($\mathcal{G}^{*}$-vertices),
\item
as 1-fold saddles: the other $\mathcal{G} \wedge  \mathcal{G}^{*}$-vertices, 
\item
as stable (unstable) separatrices at the saddles: the $\mathcal{G} \wedge  \mathcal{G}^{*}$-edges with as begin point a $\mathcal{G}^{*}$-vertex
(as end point a $\mathcal{G}$-vertex).
\item
as canonical regions (of Type 1): the faces of $\mathcal{G} \wedge  \mathcal{G}^{*}$. 
\end{itemize}
Note that $ \mathcal{X}(\mathcal{G})$ exhibits {\it no} ``saddle connections'', {\it no} closed orbits and thus {\it no} limit cycles. \\

\noindent
In order to specify the roles of $\mathcal{G} $ and $  \mathcal{G}^{*}$, we occasionally write $ \mathcal{X}(\mathcal{G})= \mathcal{X}_{\mathcal{G} \wedge  \mathcal{G}^{*}}$.\\

\noindent
Again, due to Peixoto's classification result (\cite{Peix2}) on structural stability, we have\footnote{\label{7FN7}In fact: $
\mathcal{X}_{\mathcal{G} \wedge  \mathcal{G}^{*}} \sim \mathcal{X}_{\mathcal{H} \wedge  \mathcal{H}^{*}} \Leftrightarrow \mathbb{P}^{d}(\mathcal{G})
\sim \mathbb{P}^{d}(\mathcal{H})
$, where $\sim$ is defined as in (\ref{vgl22}).}:

\noindent
If $\mathcal{H} $ is any connected multigraph such as $\mathcal{G}$
(i.e., cellularly embedded in $T$, the {\it E-
property} holds, all $\Pi$-walks are clockwise oriented, $r$ vertices, 2$r$ edges) then:
$$
\mathcal{X}_{\mathcal{G} \wedge  \mathcal{G}^{*}} \sim \mathcal{X}_{\mathcal{H} \wedge  \mathcal{H}^{*}} \Leftrightarrow \mathcal{G} 
\sim \mathcal{H}, (\text{ and thus } \mathcal{G}^{*}  \sim \mathcal{H}^{*})
$$
where, as in Section \ref{sec6}, in the left hand side $ \sim $ stands for conjugacy and in the right hand side for equivalency: an 
isomorphism between graphs respecting their orientations\footnote{\label{FN24} More precisely: if $\Pi$ and $\Pi^{'}$ are rotation systems for $\mathcal{G}$ resp. $\! \mathcal{H}$, then either $\pi^{'}_{\varphi(v)}=\pi_{v}$ for all $v \in V(\mathcal{G})$, or $\pi^{'}_{\varphi(v)}=\pi_{v}^{-1}$ for all $v \in V(\mathcal{G})$, where $\varphi$ is a homeomorphism on $T$ with $\varphi(\mathcal{G})=\mathcal{H}$. In the first case we call $\varphi$ orientation-preserving and in the second case orientation-reversing.}.\\
\noindent
The flow $\mathcal{X}(\mathcal{G}^{*})$ is the {\it dual version} of $\mathcal{X}(\mathcal{G})$, i.e., $\mathcal{X}(\mathcal{G}^{*})$ is obtained from $\mathcal{X}(\mathcal{G})$ by reversing the orientations of the trajectories of the latter flow, thereby changing repellors into attractors and vice versa. 
Since $(\mathcal{G}^{*})^{*}=\mathcal{G}$,
the dual version of $\mathcal{X}(\mathcal{G}^{*})$ is $\mathcal{X}(\mathcal{G})$. \\

Now, put $\mathcal{H}=-\mathcal{G}^{*}$, 
then:
$$
\mathcal{X}(\mathcal{G}) \sim \mathcal{X}(-\mathcal{G}^{*}) \Leftrightarrow \mathcal{G} \sim -\mathcal{G}^{*}. \text{[{\bf self duality}]}
$$

This observation can be paraphrased as:\\

\noindent
\begin{lemma}
\label{NL7.4} $\mathcal{X}(\mathcal{G})$ is self dual 
iff $\mathcal{G} $ is self dual. 
\end{lemma}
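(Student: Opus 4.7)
The plan is to observe that the lemma is essentially a restatement of the equivalence displayed immediately above it, so the proof is a direct application of Peixoto's classification criterion in the form recalled earlier in the subsection.

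First I would unpack the two sides of the claimed equivalence in terms of the standard convention that $\mathcal{G}$ carries the clockwise orientation and $\mathcal{G}^{*}$ the anti-clockwise one, so that $-\mathcal{G}^{*}$ is the clockwise-oriented dual. By definition, $\mathcal{G}$ is self dual means $\mathcal{G}\sim -\mathcal{G}^{*}$ (as oriented embedded graphs), and $\mathcal{X}(\mathcal{G})$ is self dual means $\mathcal{X}(\mathcal{G})\sim \mathcal{X}(\mathcal{G})'$, where $\mathcal{X}(\mathcal{G})'$ denotes its dual version (trajectories reversed, attractors and repellors swapped). Since this dual version is precisely $\mathcal{X}(-\mathcal{G}^{*})$ (the structurally stable flow associated with the clockwise-oriented dual graph, whose attractors are the former repellors of $\mathcal{X}(\mathcal{G})$), self duality of $\mathcal{X}(\mathcal{G})$ reads as $\mathcal{X}(\mathcal{G})\sim \mathcal{X}(-\mathcal{G}^{*})$.

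Next I would specialize the Peixoto-based equivalence displayed above the lemma,
\begin{equation*}
\mathcal{X}_{\mathcal{G}\wedge\mathcal{G}^{*}}\sim \mathcal{X}_{\mathcal{H}\wedge\mathcal{H}^{*}}\;\Longleftrightarrow\;\mathcal{G}\sim\mathcal{H},
\end{equation*}
to the choice $\mathcal{H}:=-\mathcal{G}^{*}$. Note that $\mathcal{H}$ satisfies the standing hypotheses: it is a connected toroidal multigraph with $r$ vertices and $2r$ edges, cellularly embedded, with the \emph{E-property} (inherited from $\mathcal{G}^{*}$ by Lemma \ref{NL7.3}), and its $\Pi$-walks are clockwise oriented. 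Moreover $\mathcal{H}^{*}=(-\mathcal{G}^{*})^{*}=-\mathcal{G}$, so $\mathcal{X}_{\mathcal{H}\wedge\mathcal{H}^{*}}=\mathcal{X}(-\mathcal{G}^{*})$.

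Plugging this into the displayed equivalence yields $\mathcal{X}(\mathcal{G})\sim \mathcal{X}(-\mathcal{G}^{*})$ iff $\mathcal{G}\sim -\mathcal{G}^{*}$, which in view of the unpacking above is precisely the statement ``$\mathcal{X}(\mathcal{G})$ is self dual iff $\mathcal{G}$ is self dual''. The only point requiring any care, rather than a genuine obstacle, is the bookkeeping of clockwise versus anti-clockwise orientations and the verification that the dual-version flow of $\mathcal{X}(\mathcal{G})$ coincides with $\mathcal{X}(-\mathcal{G}^{*})$ under the orientation conventions fixed at the start of Section \ref{Nsec7}.
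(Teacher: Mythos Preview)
Your proposal is correct and follows essentially the same approach as the paper: the paper explicitly puts $\mathcal{H}=-\mathcal{G}^{*}$ in the displayed Peixoto equivalence just before the lemma and then states that Lemma~\ref{NL7.4} is a paraphrase of the resulting equivalence $\mathcal{X}(\mathcal{G})\sim\mathcal{X}(-\mathcal{G}^{*})\Leftrightarrow\mathcal{G}\sim-\mathcal{G}^{*}$. Your write-up is slightly more detailed in checking that $\mathcal{H}=-\mathcal{G}^{*}$ meets the standing hypotheses and in identifying the dual-version flow with $\mathcal{X}(-\mathcal{G}^{*})$, but the argument is the same.
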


Put $\delta(\mathcal{G})=\{ \delta_{i} ={\rm deg } (v_{i}), v_{i} \in V(\mathcal{G})
\}$ and $ \delta^{*}(\mathcal{G}):=\{ \delta^{*}_{j} ={\rm deg } (v^{*}_{j}), v^{*}_{j} \in V(\mathcal{G}^{*})
\}$, then:

\begin{lemma}
\label{NL7.5}
$
\mathcal{G} \sim - \mathcal{G}^{*} \Leftrightarrow \delta(\mathcal{G})=\delta^{*}(\mathcal{G}) (=\delta(\mathcal{G}^{*})).
$
\end{lemma}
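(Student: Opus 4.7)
The forward implication is the quick half. An equivalence $\varphi : \mathcal{G} \sim -\mathcal{G}^{*}$ is realised by a homeomorphism of $T$ carrying the embedded graph $\mathcal{G}$ onto $\mathcal{G}^{*}$ while inverting the rotation system (footnote \ref{FN24}). Since the degree of a vertex is a homeomorphism invariant (it counts the edge-germs meeting at the vertex), $\varphi$ restricts to a degree-preserving bijection $V(\mathcal{G}) \to V(\mathcal{G}^{*})$, and the two multisets $\delta(\mathcal{G})$ and $\delta^{*}(\mathcal{G})$ therefore coincide.

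The substance lies in the converse. The plan is to reconstruct an equivalence from rotation-system data and then invoke the Heffter-Edmonds-Ringel rotation principle of Subsection 3.1. Assume $\delta(\mathcal{G}) = \delta^{*}(\mathcal{G})$. First, fix a bijection $\psi : V(\mathcal{G}) \to V(\mathcal{G}^{*})$ pairing each $v_{i}$ with a dual vertex $v_{j}^{*}$ of equal degree, $\delta_{i} = \delta_{j}^{*}$; equivalently, $\psi$ matches each $\mathcal{G}$-vertex $v_{i}$ of degree $\delta_{i}$ with a $\mathcal{G}$-face $F_{v_{j}^{*}}$ whose $\Pi$-walk has length $\delta_{i}$. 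At $v_{i}$ the local rotation $\pi_{v_{i}}$ is a cyclic sequence of $\delta_{i}$ edges; at the matched $v_{j}^{*}$ the clockwise rotation $(\pi_{v_{j}^{*}}^{*})^{-1}$ is, by the face-traversal procedure, recorded precisely by the facial walk $w_{F_{v_{j}^{*}}}$. Matching these two cyclic sequences of length $\delta_{i}$ supplies a candidate bijection $E(\mathcal{G}) \to E(\mathcal{G}^{*})$ compatible with $\psi$.

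The principal obstacle is global consistency: the local identifications of edges, defined vertex by vertex on the $\mathcal{G}$-side, must glue into a single bijection $E(\mathcal{G}) \to E(\mathcal{G}^{*})$ that is simultaneously read off as a matching of local rotations at every $\mathcal{G}^{*}$-vertex. Here the E-property is decisive. By Definition \ref{ND7.2} and Lemma \ref{NL7.3}, each $\Pi$-walk of $\mathcal{G}$ and each $\Pi^{*}$-walk of $\mathcal{G}^{*}$ is an Euler trail of its boundary, so the cyclic rotation around a $\mathcal{G}^{*}$-vertex is recovered unambiguously from one facial walk of $\mathcal{G}$ (no edge is repeated), and the pairs of facial sectors in opposite position in $\mathcal{G} \wedge \mathcal{G}^{*}$ (see Fig.\ref{Figure**}) rigidify the freedom in the local matching. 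I would carry this out by choosing a starting edge in each cycle and propagating the identification along the $(\Pi,\Pi^{*})$-walks that define the canonical regions of Type~1; the E-property forces closure of the propagation, yielding a rotation-system isomorphism $(\mathcal{G},\Pi) \to (\mathcal{G}^{*},(\Pi^{*})^{-1})$. The rotation principle then supplies a homeomorphism of $T$ realising the orientation-reversing equivalence $\mathcal{G} \sim -\mathcal{G}^{*}$.
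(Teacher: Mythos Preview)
Your forward implication is correct and matches the paper.

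For the converse, you correctly identify the crux---global consistency of the locally defined edge bijection---but the resolution you offer is not established. You choose a degree-preserving bijection $\psi : V(\mathcal{G}) \to V(\mathcal{G}^{*})$ and then, at each $v_i$, align the cyclic order $\pi_{v_i}$ with the clockwise order $(\pi_{\psi(v_i)}^{*})^{-1}$. But each edge $e\in E(\mathcal{G})$ is incident to \emph{two} vertices, and nothing in your argument forces the two local assignments of $e$ to agree. The claim that ``the E-property forces closure of the propagation'' is the whole difficulty restated rather than resolved: the E-property guarantees that each facial walk traverses every edge of its boundary exactly once, but it does not by itself ensure that a propagation initiated with an arbitrary $\psi$ and arbitrary starting edges returns consistently. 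There are in general several degree-preserving bijections $\psi$ (permuting vertices of equal degree) and, at each vertex of degree $\delta_i$, $\delta_i$ possible cyclic alignments; for most such choices the induced edge maps at the two ends of a given edge will clash. You supply no criterion singling out a choice that works, nor any structural argument that some choice must succeed.

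For comparison, the paper's own proof is a single sentence---the $\delta_i$'s together with the orientation ``fix the local rotations'', then invoke Heffter--Edmonds--Ringel and $(\mathcal{G}^{*})^{*}=\mathcal{G}$---and does not address the consistency issue either. Your attempt is more honest in flagging the obstacle, but it does not overcome it; the step from ``same degree multisets'' to ``rotation-system isomorphism'' remains the missing idea in both treatments. A complete argument would need either a canonical construction of the edge bijection (exploiting the duality $(\mathcal{G}^{*})^{*}=\mathcal{G}$ and the pairing of facial sectors in opposite position more substantively than you do) or a rigidity result showing that, within this class of toroidal E-graphs, the pair of degree multisets determines the embedded graph up to equivalence.
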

\begin{proof}
Note that the $\delta_{i}$'s, together with the claim ``clockwise'' (``anti-clockwise'') fix the local rotations of $\mathcal{G}$ and $\mathcal{G}^{*}$.
Now the Heffter-Edmonds-Ringel rotation principle together with $(\mathcal{G}^{*})^{*}=\mathcal{G}$ proves the assertion.
\end{proof}
From Lemmata \ref{NL7.4} and \ref{NL7.5} it follows:
\begin{corollary}
\label{NC7.6} There holds:
$
\mathcal{X}(\mathcal{G}) \sim \mathcal{X}(- \mathcal{G}^{*}) \Leftrightarrow \delta(\mathcal{G})=\delta^{*}(\mathcal{G}).
$
\end{corollary}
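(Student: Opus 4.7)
The plan is simply to chain Lemmata \ref{NL7.4} and \ref{NL7.5}; the corollary contains no new content beyond their concatenation.

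First I would invoke Lemma \ref{NL7.4} (or equivalently the displayed self-duality equivalence stated just before it, which follows from Peixoto's classification via the realization $\mathcal{X}_{\mathcal{G} \wedge \mathcal{G}^*} \sim \mathcal{X}_{\mathcal{H}\wedge \mathcal{H}^*} \Leftrightarrow \mathcal{G} \sim \mathcal{H}$ applied with $\mathcal{H} = -\mathcal{G}^*$). This gives the first equivalence
\begin{equation*}
\mathcal{X}(\mathcal{G}) \sim \mathcal{X}(-\mathcal{G}^*) \;\Longleftrightarrow\; \mathcal{G} \sim -\mathcal{G}^*.
\end{equation*}

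Next I would apply Lemma \ref{NL7.5}, which via the Heffter-Edmonds-Ringel rotation principle identifies graph self-duality with coincidence of the vertex-degree multisets:
\begin{equation*}
\mathcal{G} \sim -\mathcal{G}^* \;\Longleftrightarrow\; \delta(\mathcal{G}) = \delta^*(\mathcal{G}).
\end{equation*}

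Concatenating the two biconditionals yields the corollary. There is essentially no obstacle: all the substantive work — Peixoto's theorem on the flow side and the Heffter-Edmonds-Ringel principle on the combinatorial side — has already been absorbed into the two lemmata, so the corollary is a one-line consequence once both are in hand.
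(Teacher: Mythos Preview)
Your proposal is correct and matches the paper's own proof exactly: the corollary is stated in the paper as an immediate consequence of Lemmata \ref{NL7.4} and \ref{NL7.5}, with no further argument.
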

\vspace{0.5cm}

\noindent
{\large{\bf 3.3 The A(Angle)-property}}\\

\noindent

Recall that $V(\mathcal{G})=\{ v_{1} , \cdots , v_{r} \}$ and $\delta_{i}={\rm deg}(v_{i})$. The $\delta_{i}$ anti-clockwise ordered edges, incident with the vertex
$v_{i}$ are denoted $e_{i(k)},$ $e_{i(\delta_{i}+1)}=e_{i(1)}, k=1, \cdots , \delta_{i}$.
Note that all these edges are different (because $\mathcal{G}$ is a multigraph). Since $T$ is locally homeomorphic to an open disk, it is always possible to re-draw $\mathcal{G}$, thereby respecting $\Pi$ 
such that the Òanti-clockwise measuredÓ angles at $v_{i}$ between $e_{i(k)}$ and $e_{i(k+1)}$, say $2\pi \omega_{i(k)}$, are strictly positive and sum up to $2 \pi$. The resulting graph is again denoted by $\mathcal{G}$. Since $\mathcal{G}$ is a multigraph, we have altogether $4r (=\delta_{1}+ \cdots + \delta_{r})$
``angles'' $\omega_{i(k)}$.
The set of all these angles 
is $A(\mathcal{G})$. The subset of all angles between 
edges 
that are consecutive edges in the $\Pi$-walk $w_{F_{j}}$ that span a $F_{j}$-sector, is called the set of angles 
of $F_{j}$ and will be denoted by $a(F_{j})$.
Finally, for fixed $i$, the set of all  ``angles'' $\omega_{i(k)}, k=1, \hdots , \delta_{i},$ is the ``set $a(v_{i})$ of angles at $v_{i}$''.\\
\noindent
Now, we introduce:
\begin{definition}
\label{ND7.6}
 $\mathcal{G}$ has the {\it A(Angle)-property} if -possibly under a suitable local re-drawing- the angles in $A(\mathcal{G})$ can be chosen such that: 
 \begin{itemize}
\item[$A_{1}:$] 
$\omega_{i(k)}>0$ for all $\omega_{i(k)} \in A(\mathcal{G})$.
\item [$A_{2}:$] 
$\sum_{a(v_{i})}\omega_{i(k)}=1, \text{ for all }i=1, \! \cdots \! , r.$
\item [$A_{3}:$] 
$\sum_{a(F_{j})}\omega_{i(k)}=1, \text{ for all }j=1, \! \cdots \! , r.$
\end{itemize}
\end{definition}
\noindent
Note that Conditions $A_{1}$ and $A_{2}$ can always be fulfilled; the crucial claim is Condition $A_{3}$.\\
\noindent
Moreover, the sets of angles at the vertices $v$ of $\mathcal{G}$ that fulfil the conditions $A_{1}$ and $A_{2}$, fix the anti-clockwise oriented local rotations $\pi_{v}$. Hence, $\mathcal{G}$ is determined by these angles.\\

Let $J$ be an arbitrary {\it non empty} subset of $\{1, \hdots ,r \}$. The subgraph of $\mathcal{G}$ generated by all vertices and edges in the faces $F_{j} , j \in J,$ is denoted by $\mathcal{G}(J)$. An {\it interior vertex} of $\mathcal{G}(J)$ is a vertex of $\mathcal{G}$ that is only incident with $\mathcal{G}$-faces labelled by $J$, whereas a vertex of $\mathcal{G}(J)$ is called {\it exterior} if it is incident with both a face labelled by $J $ and a face {\it not} labelled by $J$. The sets of all interior, respectively all exterior vertices in $\mathcal{G}(J)$ are denoted by Int$\mathcal{G}(J)$ and 
Ext$\mathcal{G}(J)$ respectively. If $J=\{1, \hdots ,r \}$, then $|{\rm Int}\mathcal{G}(J)|=|J|=|V(\mathcal{G}(J))|=|V(\mathcal{G})|(=r)$, where as usual $| . |$ stands for cardinality. \\

\noindent
We have: 
\begin{lemma}
\label{NL7.7}
 Assume that 
 $\mathcal{G}$ fulfils the A-property. Then:
  \begin{equation}
 \label{eq25}
 |{\rm Int} \mathcal{G}(J )|<|J  |<|V(\mathcal{G}(J))|
 , \text{ for all $J$, $\emptyset \neq J \subsetneq \{1, \! \cdots \! ,r \}$}
\end{equation}
\end{lemma}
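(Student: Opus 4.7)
The plan is to prove (\ref{eq25}) by a double counting argument applied to the set of angles. By property $A_3$, summing the angle condition over $j \in J$ gives
\[
|J| \;=\; \sum_{j \in J} \sum_{\omega \in a(F_j)} \omega.
\]
The key observation is that every angle $\omega_{i(k)}$ in $A(\mathcal{G})$ is attached to exactly one vertex $v_i$ and lies in exactly one face $F_j$ (namely the face containing the sector spanned by the consecutive edges $e_{i(k)}, e_{i(k+1)}$). So I would swap the order of summation: for each vertex $v_i$ of $\mathcal{G}(J)$, let $s_i$ denote the sum of those angles at $v_i$ that belong to $\bigcup_{j \in J} a(F_j)$. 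Then
\[
|J| \;=\; \sum_{v_i \in V(\mathcal{G}(J))} s_i.
\]

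Next I would classify the contributions. If $v_i \in \mathrm{Int}\,\mathcal{G}(J)$, every face incident with $v_i$ is labelled by $J$, so $s_i$ is the full sum of angles at $v_i$, which equals $1$ by $A_2$. If $v_i \in \mathrm{Ext}\,\mathcal{G}(J)$, then $v_i$ is incident with both some face $F_j$, $j \in J$, and some face $F_k$, $k \notin J$; each such face contributes at least one angle at $v_i$, and by $A_1$ every individual angle is strictly positive, so $0 < s_i < 1$. Writing $n_I = |\mathrm{Int}\,\mathcal{G}(J)|$ and $n_E = |\mathrm{Ext}\,\mathcal{G}(J)|$, the identity above becomes
\[
|J| \;=\; n_I \;+\; \sum_{v_i \in \mathrm{Ext}\,\mathcal{G}(J)} s_i,
\]
with each term in the last sum lying strictly between $0$ and $1$. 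The desired double inequality $n_I < |J| < n_I + n_E = |V(\mathcal{G}(J))|$ then follows the moment we know $\mathrm{Ext}\,\mathcal{G}(J) \neq \emptyset$.

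The main obstacle is establishing that $\mathrm{Ext}\,\mathcal{G}(J)$ is indeed non-empty for every proper non-empty $J$. This I would derive from the connectedness of $\mathcal{G}$ (and hence of its dual via Lemma \ref{NL7.3}): since $J \subsetneq \{1,\dots,r\}$ and $J \neq \emptyset$, one can find an edge $e \in E(\mathcal{G})$ separating a face $F_j$ with $j \in J$ from a face $F_k$ with $k \notin J$, because otherwise the faces would split into two classes with no common edge, contradicting the connectivity of $\mathcal{G}^{*}$. By Corollary \ref{C6.3} (applied in the present abstract setting, which is exactly the absence of loops in $\mathcal{G}$), the two endpoints of $e$ are distinct, and both are exterior vertices of $\mathcal{G}(J)$. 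This gives $n_E \geq 2$ and completes the argument. Everything else in the proof is bookkeeping of the decomposition of angles by vertex and face.
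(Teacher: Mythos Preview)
Your argument is correct and follows essentially the same route as the paper: both proofs compute $|J|=\sum_{j\in J}\sum_{a(F_j)}\omega_{i(k)}$, split the contribution by interior versus exterior vertices, and then reduce everything to showing $\mathrm{Ext}\,\mathcal{G}(J)\neq\emptyset$. The only cosmetic difference is that the paper derives this last fact directly from the connectedness of $\mathcal{G}$ (if $\mathrm{Ext}\,\mathcal{G}(J)=\emptyset$ then $\mathrm{Int}\,\mathcal{G}(J)$ and $\mathrm{Int}\,\mathcal{G}(J^C)$ would disconnect $\mathcal{G}$), whereas you go through the connectedness of $\mathcal{G}^{*}$ to find a separating edge; note, however, that your citation of Lemma~\ref{NL7.3} is misplaced---that lemma concerns the E-property, not connectedness of the dual, which instead follows from $\mathcal{G}^{*}$ being cellularly embedded (Euler's formula).
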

\begin{proof}
By Definition \ref{ND7.6}
\begin{equation*}
\sum_{j \in J  }\sum_{a(F_{j})}\omega_{i(k)}=|J  |.
\end{equation*}                           
The contribution of any {\it interior} vertex of $\mathcal{G}(J  )$ to the sum in the left-hand side of this equation 
is equal to 1, whereas each {\it exterior} vertex contributes with a number that is strictly between 0 and 1. 
Hence, we are done if- for the subsets $J$ under consideration- we can prove that ${\rm Ext}\mathcal{G}(J  )\neq \emptyset.$ So, assume ${\rm Ext}\mathcal{G}(J  )$ is empty, thus ${\rm Int} \mathcal{G}(J ) \neq \emptyset.$ Let $J^{C}$ be the complement of $J$ in $\{1, \hdots ,r \}$. Thus $\emptyset \neq J^{C} \subsetneq \{1, \hdots ,r \}$ and Ext$\mathcal{G}(J^{C} )$(=Ext$\mathcal{G}(J))=\emptyset$.
Hence, we also have  Int$\mathcal{G}(J^{C} ) \neq \emptyset$.
Now, the connectedness of $\mathcal{G}$ yields a contradiction. 
\end{proof}

\begin{remark}
\label{NR7.8}
If $\mathcal{G}$ has the {\it A-property}, then: 
l.h.s. of (\ref{eq25}) $\Leftrightarrow$ r.h.s. of (\ref{eq25}), so that one of these equalities is redundant. 
\end{remark}

\begin{lemma}
\label{NL7.9}
If $\mathcal{G}$  fulfils $|J  |<|V(\mathcal{G}(J))|$ for all $J$, $\emptyset \neq \!J\! \subsetneq \{1, \! \cdots \! ,r \}$ $($cf. $($\ref{eq25}$)$$)$, then:
\begin{align}
\notag
&\text{Any assignment of an arbitrary vertex $v_{i_{0}}$ to any face $F_{j_{0}}$ adjacent to $v_{i_{0}}$,}\\
\label{eq26}
&\text{can be extended to a bijection $\mathcal{T}: V(\mathcal{G}) \to F(\mathcal{G}),$ with $v \in V(\partial \mathcal{T}(v))$ and }
\\ \notag
&\text{$\mathcal{T}(v_{i_{0}})=F_{j_{0}}$, i.e., the assignment $v_{i_{0}} \mapsto F_{j_{0}}$ can be extended to  a transversal }\\ \notag
&\text{of the vertex sets 
$V(\partial F_{j}), j=1, \cdots ,r.$}
\end{align}
\end{lemma}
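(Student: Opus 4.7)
The plan is to recognize the statement as an application of Hall's marriage theorem (to which the paper's abstract already alludes) and use the strict form of the inequality in the hypothesis to accommodate the prescribed pair $(v_{i_{0}}, F_{j_{0}})$. To this end I would first set up the bipartite incidence graph $B$ on vertex classes $V(\mathcal{G}) = \{v_{1}, \ldots, v_{r}\}$ and $F(\mathcal{G}) = \{F_{1}, \ldots, F_{r}\}$, joining $v_{i}$ to $F_{j}$ in $B$ whenever $v_{i} \in V(\partial F_{j})$. A bijection $\mathcal{T}: V(\mathcal{G}) \to F(\mathcal{G})$ with $v \in V(\partial \mathcal{T}(v))$ is then precisely a perfect matching in $B$, and for $J \subseteq \{1, \ldots, r\}$ the $B$-neighbourhood of the face-set $\{F_{j} \mid j \in J\}$ is, by definition of $\mathcal{G}(J)$, equal to $V(\mathcal{G}(J))$.

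Next I would verify Hall's condition for $B$ directly from the hypothesis. For $\emptyset \neq J \subsetneq \{1, \ldots, r\}$ the assumption delivers $|V(\mathcal{G}(J))| \geq |J|+1$, and for $J = \{1, \ldots, r\}$ one has $V(\mathcal{G}(J)) = V(\mathcal{G})$, so $|V(\mathcal{G}(J))| = r = |J|$; the case $J = \emptyset$ is vacuous. Hence Hall's condition $|V(\mathcal{G}(J))| \geq |J|$ holds for every $J$, so $B$ admits a perfect matching and a bijection $\mathcal{T}$ as required exists in the unrestricted sense.

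To impose the prescribed assignment $\mathcal{T}(v_{i_{0}}) = F_{j_{0}}$, I would delete $v_{i_{0}}$ and $F_{j_{0}}$ from $B$, obtaining a bipartite graph $B'$ on $(r-1)+(r-1)$ vertices, and apply Hall's theorem to $B'$: it suffices to check that for every $\emptyset \neq J \subseteq \{1, \ldots, r\} \setminus \{j_{0}\}$ one has $|V(\mathcal{G}(J)) \setminus \{v_{i_{0}}\}| \geq |J|$. Since such $J$ is automatically a proper, non-empty subset of $\{1, \ldots, r\}$, the hypothesis yields $|V(\mathcal{G}(J))| \geq |J|+1$, and removing the single vertex $v_{i_{0}}$ reduces this count by at most one; the required inequality follows. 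The perfect matching of $B'$ thus obtained, extended by $v_{i_{0}} \mapsto F_{j_{0}}$, gives the desired bijection $\mathcal{T}$.

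The only delicate point is that Hall's condition for $B$ itself holds with \emph{equality} on the full set $J = \{1, \ldots, r\}$, so without the strict inequality for proper non-empty $J$ one could easily destroy Hall's condition by pinning $v_{i_{0}}$ to $F_{j_{0}}$. The strict form of (\ref{eq25}) provides exactly the one-unit slack needed to absorb the deletion of $v_{i_{0}}$, which is why the lemma requires the proper-subset inequality and not merely the non-strict version.
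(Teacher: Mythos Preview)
Your proof is correct and follows essentially the same route as the paper: both arguments remove the prescribed vertex $v_{i_{0}}$ and use the strict inequality $|V(\mathcal{G}(J))|\geq |J|+1$ for proper non-empty $J$ to recover Hall's condition after the deletion. The only cosmetic difference is that the paper keeps $F_{j_{0}}$ in play but assigns it demand $p_{j_{0}}=0$ and invokes a weighted (defect) form of Hall's theorem from Mirsky, whereas you simply delete $F_{j_{0}}$ as well and apply the ordinary Hall theorem to the reduced bipartite graph $B'$; the verification of the Hall inequalities is identical in both cases.
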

\begin{proof}
Consider the vertex set $V(\partial F_{j})$ of $\partial F_{j}$.  Put for $j \in \{1, \! \cdots \! ,r \}, p_{j}=1, \text{ if }j \neq j_{0}, \text{ and }p_{j_{0}}=0$.
For {\it all non empty} subsets $J$ of $\{1, \hdots ,r \}$(i.e. including $J=\{1, \hdots ,r \}$), we have 
\begin{equation*}
|V(\mathcal{G}(J)) \backslash \{v_{i_{0}}\}| \geqslant \sum_{j \in J}p_{j},
\end{equation*}
According to a slight generalization of Hall's theorem on distinct representatives (cf. \cite{Mirsky}), these inequalities 
are necessary and sufficient for the existence of pairwise disjoint sets $X_{1}, \! \cdots \!, X_{r}$, such that
$$
X_{j} \subset V(\partial F_{j})\backslash \{v_{i_{0}}\}   \text{  , with }|X_{j}| = p_{j}, j=1, \! \cdots \! ,r.
$$
Hence, the singletons(!) $X_{j}, j \in \{1, \! \cdots \! ,r \}, j \neq j_{0}$, together with $v_{i_{0}}$ yield the existence of the desired transversal $\mathcal{T}$.
\end{proof}
\noindent
Now, let us re-label 
the angles of 
$\mathcal{G}$ by $x_{\lambda}$, with $\lambda= 1, \! \cdots \! , 4r (=\sum_{i=1}^{r}{\rm deg}(v_{i}))$. We associate with $\mathcal{G}$ a $2r \! \times \!4r$-matrix $M(\mathcal{G})$ with coefficients $m_{\ell \lambda}$:
\begin{equation*}
m_{\ell \lambda}=
\begin{cases}
	1,&	\text{if } \ell= 1, \! \cdots \!, r,   \text{ and }  x_{\lambda} \text{ is an angle at }v_{\ell},\text{ i.e. }x_{\lambda} \text{ in } a(v_{\ell});\\
	1,&	\text{if } \ell= r+1, \! \cdots \! , 2r,  \text{ and }  x_{\lambda} \text{ is an angle in } a(F_{\ell -r});\\
	0, &	\text{ otherwise.}
\end{cases}
\end{equation*}
Apparently, $\mathcal{G}$
has the {\it A-property}
if and only if the following system of $2r$ equations and $4r$ inequalities has a solution:
\begin{equation}
\label{rela1}
\begin{cases}
[M(\mathcal{G})\; \mid
-\!1].(x  \mid 
1)^{T} = (0, ..., 0)^{T}\\
x_{\lambda} > 0,	\lambda=1, \! \cdots \!, 4r
\end{cases}
\end{equation}
Here, $[M(\mathcal{G}) \; \mid
-\!1]$
stands for the matrix $M(\mathcal{G})$ augmented with a $(4r+1)$-st column, each of its elements being equal to $-1$, and 
$(x  \mid 
1) = (x_{1}, ..., x_{4r}, 1)$.\\

\noindent
Basically due to Stiemke's theorem (cf. \cite{Mang}), System (\ref{rela1}) has a solution 
{\it iff} System (\ref{rela2}) below has no solution for which at least one of the inequalities is strict:
\begin{equation}
\label{rela2}\left(
\begin{matrix}
M(\mathcal{G})^{T} \\
----\\
-1 \! \cdots \! -1
\end{matrix}
\right).Z^{T}\geqslant (0, \! \cdots \! ,0)^{T}, \text{ with } Z=(z_{1}, \cdots , z_{i}, \cdots ,z_{r}, \cdots ,z_{r+j}, \cdots ,z_{2r})
\end{equation}
Here,
$$
\left(
\begin{matrix}
M(\mathcal{G})^{T} \\
----\\
-1 \! \cdots \! -\!1
\end{matrix}
\right)
$$
stands for the matrix  $M(\mathcal{G})^{T}$ augmented with a $(4r+1)$-st row, all its coefficients being equal to $-1$.
For $i,j \in \{1, \cdots ,r \}$, the pair $(i,j)$ is called {\it associated}, notation $(i,j) \in {\bf O}$, if $v_{i}$ and $F_{j}$ share an angle.\\

\noindent
Obviously, System (\ref{rela2}) is equivalent with 
\begin{equation}
\label{rela3}
\begin{cases}
z_{i} +z_{r+j}\geqslant 0, \text{ for all }(i,j) \in {\bf O}\\
\sum_{\ell=1}^{2r}z_{\ell}\leqslant 0
\end{cases}
\end{equation}

But now we are in the position to apply Lemma \ref{NL7.9}: 
\begin{lemma}
\label{NL7.10}
Consider a graph $\mathcal{G}$
, not necessarily with the {\it E-property}. 
Then we have
$$
 \mathcal{G} \text{ has the A-property} \Leftrightarrow |J  |<|V(\mathcal{G}(J))| \text{ for all } J, \emptyset \neq J \subsetneq \{1, \! \cdots \! ,r \}.
$$
\end{lemma}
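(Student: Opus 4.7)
The plan is to prove both implications, with the forward direction essentially restating an earlier lemma and the reverse direction combining Stiemke's theorem of alternatives (already invoked to pass between systems \eqref{rela1} and \eqref{rela3}) with the Hall-type transversal produced by Lemma \ref{NL7.9}.

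For the direction ($\Rightarrow$), if $\mathcal{G}$ has the \emph{A-property} then Lemma \ref{NL7.7} directly delivers $|\mathrm{Int}\,\mathcal{G}(J)|<|J|<|V(\mathcal{G}(J))|$ for every proper non-empty $J\subsetneq\{1,\ldots,r\}$; in particular the claimed strict inequality $|J|<|V(\mathcal{G}(J))|$ holds. (The proof of Lemma \ref{NL7.7} only uses $A_1$--$A_3$ and connectedness of $\mathcal{G}$, not the \emph{E-property}.)

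For the direction ($\Leftarrow$), assume the Hall-type hypothesis. By the Stiemke reformulation recalled in the text, it suffices to rule out every $Z=(z_1,\ldots,z_{2r})$ that solves \eqref{rela3} with at least one strict inequality; equivalently, I must show that any $Z$ satisfying $z_i+z_{r+j}\geqslant 0$ for all $(i,j)\in{\bf O}$ and $\sum_{\ell=1}^{2r}z_\ell\leqslant 0$ in fact realises equality throughout. Fix such a $Z$ and an \emph{arbitrary} associated pair $(i_0,j_0)\in{\bf O}$. Because the Hall-type inequality holds, Lemma \ref{NL7.9} supplies a bijection $\mathcal{T}:V(\mathcal{G})\to F(\mathcal{G})$ with $\mathcal{T}(v_{i_0})=F_{j_0}$ and $v_i\in V(\partial\mathcal{T}(v_i))$ for every $i$. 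Writing $\sigma$ for the induced permutation of $\{1,\ldots,r\}$, the pairs $(i,\sigma(i))$ all lie in ${\bf O}$, so
$$
0\;\geqslant\;\sum_{\ell=1}^{2r}z_\ell\;=\;\sum_{i=1}^{r}\bigl(z_i+z_{r+\sigma(i)}\bigr)\;\geqslant\;0.
$$
Hence the total sum vanishes and, since each summand is non-negative, every summand vanishes too; in particular $z_{i_0}+z_{r+j_0}=0$. As $(i_0,j_0)\in{\bf O}$ was arbitrary, \emph{every} associated inequality in \eqref{rela3} is tight and the final inequality is an equality as well. No strict inequality is possible, so by Stiemke's alternative the system \eqref{rela1} is solvable, which is precisely the \emph{A-property}.

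The main obstacle is exactly the step that turns each individual non-negativity $z_{i_0}+z_{r+j_0}\geqslant 0$ into equality: this requires, for every single associated pair, a system of distinct representatives in which that pair appears. Lemma \ref{NL7.9} (the Hall-theorem input) is tailor-made for this, prescribing the value of $\mathcal{T}$ at one chosen vertex. Once that transversal is in hand, the squeeze between the two $\geqslant 0$ and $\leqslant 0$ estimates is automatic, and neither the \emph{E-property} nor any property of $\mathcal{G}^{*}$ is used.
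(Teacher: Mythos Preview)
Your proof is correct and follows essentially the same route as the paper's own argument: the forward direction cites Lemma~\ref{NL7.7}, and the reverse direction combines the Stiemke alternative between systems~\eqref{rela1} and~\eqref{rela3} with the pointed transversal of Lemma~\ref{NL7.9}, using the squeeze $0\geqslant\sum z_\ell=\sum(z_i+z_{r+\sigma(i)})\geqslant 0$ to force every associated inequality tight. Your explicit remark that Lemma~\ref{NL7.7} does not actually invoke the \emph{E-property} is a useful clarification, since the lemma is stated for graphs ``not necessarily with the \emph{E-property}''.
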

\begin{proof}
``$\Rightarrow$'' See Lemma \ref{NL7.7}.
\newline
``$\Leftarrow$''.
Suppose that $Z=(z_{1}, \! \cdots \! ,z_{2r})$ is a solution of System (\ref{rela3}) for which at least one of the inequalities is not strict.  We lead this assumption to a contradiction. 

Consider an associated pair $(i_{0},j_{0})$. So, the vertex $v_{i_{0}}$ and the face $F_{j_{0}}$ have an angle in common. Extend by Lemma \ref{NL7.9}, the assignment $v_{i_{0}} \mapsto F_{j_{0}}$ to a transversal $\mathcal{T}$ as in (\ref{eq26}) and define $\tau (i)$ by $F_{\tau (i)}=\mathcal{T}(v_{i})$. This means that $v_{i}$ and $F_{\tau (i)}$ share an angle, thus $(i,\tau(i)) \in {\bf O}$; in particular $(i_{0},\tau(i_{0}))=(i_{0},j_{0}) \in {\bf O}$.
Since $Z$ fulfills (\ref{rela3}), we have:
$z_{i}+z_{r+\tau (i)} \geqslant 0, i=1, \cdots ,r,$ and moreover (use that $\mathcal{T}$ is bijective) also 
$$
\sum_{i=1}^{r}(z_{i}+z_{r+\tau (i)})= \sum_{\ell =1, \hdots ,2r}z_{\ell} \leqslant 0.
$$
Hence, $z_{i}+z_{r+\tau (i)}= 0, i=1, \cdots ,r.$ In particular,
$
z_{i_{0}}+z_{r+j_{0}}=0.
$
Since the associated pair $(i_{0},  j_{0})$ was  chosen arbitrarily, we have $z_{i} +z_{r+j}=0$, for every combination $(i, j) \in {\bf O}$. This contradicts our assumption on $Z$.
It follows that System (\ref{rela3}) {\it does not have} a solution for which at least one of the inequalities is strict. Thus System (\ref{rela1})
{\it does admit} a solution, i.e. ($\mathcal{G}$, $\Pi$) has the {\it A-property}.
\end{proof}

\begin{corollary}
\label{NC7.11}
Let $\mathcal{G}$ be a graph
as in Lemma \ref{NL7.10}.
Then there holds:
$$
\text{$\mathcal{G}$
has the A-property} \Leftrightarrow \text{ Condition $($}\ref{eq26} \text{$)$ holds for $\mathcal{G}$.}
$$
\end{corollary}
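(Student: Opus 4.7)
The plan is to deduce the corollary from the three preceding lemmas by routing through the combinatorial inequality
\[
|J|<|V(\mathcal{G}(J))|\quad\text{for all }J,\;\emptyset\neq J\subsetneq\{1,\ldots,r\}, \tag{$\star$}
\]
which, by Lemma \ref{NL7.10}, is equivalent to the A-property. So the task reduces to showing $(\star)\Leftrightarrow$ Condition (\ref{eq26}).

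For the implication ``A-property $\Rightarrow$ Condition (\ref{eq26})'', Lemma \ref{NL7.10} (or equivalently the direct argument in Lemma \ref{NL7.7}) gives $(\star)$ from the A-property, and then Lemma \ref{NL7.9} immediately yields Condition (\ref{eq26}). So this direction is essentially a concatenation of already-established results.

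The substantive direction is ``Condition (\ref{eq26}) $\Rightarrow$ A-property''. Assume Condition (\ref{eq26}) holds. By Lemma \ref{NL7.10}, it suffices to verify $(\star)$. Fix $J$ with $\emptyset\neq J\subsetneq\{1,\ldots,r\}$. Any transversal $\mathcal{T}$ supplied by Condition (\ref{eq26}) restricts to a system of distinct representatives for the faces in $J$ (using vertices in $V(\mathcal{G}(J))$), giving the weak inequality $|V(\mathcal{G}(J))|\geq|J|$. To rule out equality, imitate the connectedness argument from the proof of Lemma \ref{NL7.7}: since $\mathcal{G}$ is connected and both $J$ and $J^{C}$ are non-empty, the set $\mathrm{Ext}\,\mathcal{G}(J)$ is non-empty, so there exists a vertex $v_{i_{0}}\in V(\mathcal{G}(J))$ that is also incident to some face $F_{j_{0}}$ with $j_{0}\in J^{C}$. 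Invoke Condition (\ref{eq26}) with the assignment $v_{i_{0}}\mapsto F_{j_{0}}$ to obtain a transversal $\mathcal{T}$ satisfying $\mathcal{T}(v_{i_{0}})=F_{j_{0}}$. The preimages $\mathcal{T}^{-1}(F_{j})$, $j\in J$, then furnish $|J|$ pairwise distinct vertices of $V(\mathcal{G}(J))$, none of which equals $v_{i_{0}}$ (since $\tau(i_{0})=j_{0}\notin J$). Hence $V(\mathcal{G}(J))\supsetneq\{\mathcal{T}^{-1}(F_{j}):j\in J\}$, so $|V(\mathcal{G}(J))|\geq|J|+1$, establishing $(\star)$ and therefore the A-property.

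The only mildly delicate step is the existence of an exterior vertex $v_{i_{0}}$: this is precisely where the connectedness hypothesis on $\mathcal{G}$ enters, in exactly the same way as in the final lines of the proof of Lemma \ref{NL7.7}. Once this vertex is in hand, the argument is just a counting observation about the transversal produced by Condition (\ref{eq26}).
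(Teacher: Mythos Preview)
Your proof is correct, but your route for the backward direction differs from the paper's. The paper simply observes that the $\Leftarrow$ argument in the proof of Lemma~\ref{NL7.10} (the Stiemke-based verification that System~(\ref{rela3}) has no solution with a strict inequality) invokes the hypothesis $|J|<|V(\mathcal{G}(J))|$ only through Lemma~\ref{NL7.9}, i.e.\ only in order to produce the transversals of Condition~(\ref{eq26}); hence that argument goes through verbatim if one assumes (\ref{eq26}) directly. You instead insert an intermediate combinatorial step: from (\ref{eq26}) you deduce $(\star)$ by choosing an exterior vertex $v_{i_0}$ (available by connectedness, exactly as in Lemma~\ref{NL7.7}) and forcing the transversal to send it outside $J$, which yields $|V(\mathcal{G}(J))|\geq |J|+1$; then you apply Lemma~\ref{NL7.10}. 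The paper's path is shorter, while yours has the merit of giving a direct, Stiemke-free proof that $(\star)$ and Condition~(\ref{eq26}) are equivalent---precisely the equivalence the paper subsequently packages as the \emph{H-condition}.
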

\begin{proof}
$\Rightarrow$ See Lemmas \ref{NL7.7}, \ref{NL7.9}.\\
$\Leftarrow$ Follows from  the ($\Leftarrow$ part) of the proof of Lemma \ref{NL7.10}.
\end{proof}
The (equivalent) conditions ``$|J  |<|V(\mathcal{G}(J))|$ for all $J$, $\emptyset \neq J \subsetneq \{1, \! \cdots \! ,r \}$'' and (\ref{eq26}) will be referred to as to the {\it H(Hall)-condition}; see also Section \ref{ss13.2}.

As it is  easily verified, the graphs $\mathcal{G}$ in Fig. \ref{Figure15}(i), (ii) fulfil the {\it H-condition}, but $\mathcal{G}$ in Fig. \ref{Figure15}(iii) not. Hence, by Lemma \ref{NL7.10}, or Corollary \ref{NC7.11}, the graphs $\mathcal{G}$ in Fig. \ref{Figure15}(i), (ii) have the {\it A-property}, but this it not so for the graph in Fig. \ref{Figure15}(iii).\\

\noindent
{\large{\bf 3.4 Newton graphs}}\\

\noindent
\begin{definition}
\label{ND7.12}
Cellularly embedded toroidal graphs with $r$ vertices, 2$r$ edges (and thus $r$ faces) that fulfil the {\it A-and the E-properties} are called {\it Newton graphs of rank $r$}.
\end{definition}
\begin{lemma}
\label{NL7.13}
If ${\rm (}\mathcal{G}$, $\Pi {\rm )}$
is a Newton graph, then this is also true for ${\rm (}\mathcal{G}^{*}$, $\Pi^{*} {\rm )}$.
\end{lemma}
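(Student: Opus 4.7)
My plan is to verify each of the defining conditions of a Newton graph for $(\mathcal{G}^*,\Pi^*)$ in turn. The topological and combinatorial content will come essentially for free: a cellular embedding of $\mathcal{G}^*$ in $T$ is built into its construction as the geometric dual, and the swap $V(\mathcal{G}) \leftrightarrow F(\mathcal{G}^*)$, $F(\mathcal{G}) \leftrightarrow V(\mathcal{G}^*)$, $E(\mathcal{G}) \leftrightarrow E(\mathcal{G}^*)$ shows that $\mathcal{G}^*$ has $r$ vertices, $2r$ edges and $r$ faces. I would then invoke Lemma~\ref{NL7.3} to get the E-property for $\mathcal{G}^*$, noting that the same equivalence (E-property for $\mathcal{G}$ iff no loops in $\mathcal{G}^*$) also certifies that $\mathcal{G}^*$ is itself a multigraph in the required sense.

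The only substantive step is transferring the A-property. I would exploit the opposite-position bijection between facial sectors of $\mathcal{G}$ and those of $\mathcal{G}^*$: each $F$-sector at a vertex $v \in \partial F$ is paired with the $F^*_v$-sector at the dual vertex $v^*_F$. The plan is to fix a re-drawing of $\mathcal{G}$ with angle weights $\{\omega_{i(k)}\}$ satisfying $A_1$--$A_3$, and to re-draw $\mathcal{G}^*$ in $T$ so that each $\mathcal{G}^*$-angle receives the same numerical value as the $\mathcal{G}$-angle of its opposite sector. Positivity will transfer immediately, giving $A_1^*$. Via the bijection, the sum of all $\mathcal{G}^*$-angles at a dual vertex $v^*_F$ equals the sum of the $F$-sector angles as one goes around the $\mathcal{G}$-face $F$, which is $1$ by $A_3$ for $\mathcal{G}$; this is $A_2^*$. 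Symmetrically, the sum of $\mathcal{G}^*$-angles inside the dual face $F^*_v$ equals the sum of $\mathcal{G}$-angles at $v$, which is $1$ by $A_2$ for $\mathcal{G}$, yielding $A_3^*$.

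The main point requiring care is the feasibility of the proposed local re-drawing of $\mathcal{G}^*$: at each $v^*_F$ the incident dual edges must be placed so that the prescribed positive angles appear between consecutive edges in the cyclic order fixed by $\pi^*_{v^*_F}$. Since a small topological disk in $T$ centred at $v^*_F$ can accommodate any positive angles summing to $2\pi$ in any prescribed cyclic order, this will be automatic; the resulting re-drawing preserves the rotation system $\Pi^*$ and its cellular embedding in $T$, and yields the A-property for $\mathcal{G}^*$. As a consistency check, one can alternatively derive the A-property for $\mathcal{G}^*$ from Lemma~\ref{NL7.10} via a Hall-type self-duality argument: a strict Hall inequality $|J|<|V(\mathcal{G}(J))|$ on proper subsets of $F(\mathcal{G})$ forces the analogous strict inequality on proper subsets of $V(\mathcal{G})$, by taking complements $J:=F(\mathcal{G})\setminus N(I)$ and exploiting the balanced bipartite structure with $|V(\mathcal{G})|=|F(\mathcal{G})|=r$.
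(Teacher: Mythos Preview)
Your proposal is correct. Interestingly, your two routes and the paper's two routes are essentially the same pair, but with the emphasis swapped.

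The paper's \emph{official} proof goes through the Hall/transversal characterization of the $A$-property (Corollary~\ref{NC7.11}): the vertex--face incidence relation of $\mathcal{G}$ is, by duality, the same bipartite relation as the vertex--face incidence of $\mathcal{G}^{*}$ with the two sides interchanged; hence the condition ``every incidence $v_{0}\!\in\!\partial F_{0}$ extends to a transversal'' transfers directly from $\mathcal{G}$ to $\mathcal{G}^{*}$. Your ``consistency check'' via complements and Lemma~\ref{NL7.10} is a variant of this same bipartite symmetry argument.

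Your \emph{primary} route---assign to each $\mathcal{G}^{*}$-sector the numerical angle of its opposite $\mathcal{G}$-sector, so that $A_{2}$ and $A_{3}$ swap under duality---is exactly the geometric verification the paper gives \emph{after} its formal proof (the two-step re-drawing leading to Lemma~\ref{NL7.14}). The angle-transfer argument is shorter and yields the extra information that the angles in opposite facial sectors of $\mathcal{G}\wedge\mathcal{G}^{*}$ can be taken equal; the transversal argument, on the other hand, stays within the purely combinatorial Hall framework and avoids any mention of re-drawing.
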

\begin{proof}
In view of Lemma \ref{NL7.3}, we only have to show that ($\mathcal{G}^{*}$, $\Pi^{*}$) has the {\it A-property}. 
Let $v^{*}_{0}$ be a $\mathcal{G}^{*}$-vertex and consider an assignment $v^{*}_{0} \mapsto F^{*}_{v_{0}}$, where $F^{*}_{v_{0}}$ is a
$\mathcal{G}^{*}$-face adjacent to $v^{*}_{0}$ corresponding with the $\mathcal{G}$-vertex $v_{0}$. 
So the pair $(v_{0}, v_{0}^{*})$ is in opposite position, and $v_{0}$ is adjacent to the $\mathcal{G}^{*}$-face $F_{v_{0}^{*}}$.
By assumption, $\mathcal{G}$ fulfills the A-property. So, we can extend (by Corollary \ref{NC7.11}) the 
assignment 
$v_{0} \mapsto F_{v^{*}_{0}}$ to a transversal of the vertex sets of $\mathcal{G}$ (i.e., to pairs ($v_{i}$, $v_{i}^{*}$) in opposite position such that all $v_{i}$ and $v_{i}^{*}$ are different), and thus to a transversal $v_{i}^{*} \to F_{v_{i}}^{*}$ of the vertex sets of  $\mathcal{G}^{*}$-faces (extending $v^{*}_{0} \mapsto F_{v^{*}_{0}}$).
Now, application of Corollary \ref{NC7.11} yields the assertion. 
\end{proof}
\noindent
The above result is easily verified by a geometric argument. Consider -under the assumption that the {\it A-and E-properties} hold for $\mathcal{G}$- the graph $\mathcal{G} \wedge \mathcal{G}^{*}$ on $T$ and proceed in two steps: (see Fig. \ref{Figure18}
)\\
\noindent
Step 1: Re-draw $\mathcal{G} \wedge \mathcal{G}^{*}$ locally around the vertices of $\mathcal{G}$ (solid lines) such that the angles in $A(\mathcal{G})$ fulfil the Conditions A1-A3 (in Definition \ref{ND7.6}).\\
\noindent
Step 2: Due to Condition $A_3$  for $\mathcal{G}$, we may re-draw $\mathcal{G} \wedge \mathcal{G}^{*}$ locally around the vertices of $ \mathcal{G}^{*}$ (dotted lines) such that the $A(\mathcal{G})$- and $A(\mathcal{G}^{*})$-angles of facial sectors in opposite position are equal.

We conclude that also $\mathcal{G}^{*}$ has the {\it A-property}, and find as a by-product:
\begin{lemma}
\label{NL7.14}
If 
$\mathcal{G}$ is a Newton graph, we may assume -possibly after a suitable local re-drawing- that in each face of $\mathcal{G} \wedge \mathcal{G}^{*}$ the angles at the $\mathcal{G}$-and $\mathcal{G}^{*}$-vertices are equal (and non-vanishing).
\end{lemma}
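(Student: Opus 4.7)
My plan is to follow the two-step geometric procedure sketched by the authors immediately before the lemma and make precise why each step is realizable. For the first step, I invoke the A-property of the Newton graph $\mathcal{G}$ (Definition \ref{ND7.6}): possibly after a local re-drawing around each $\mathcal{G}$-vertex $v_i$, I fix strictly positive angles $\omega_{i(k)}\in A(\mathcal{G})$ satisfying
\[
\sum_{a(v_i)} \omega_{i(k)} = 1 \quad (i=1,\ldots,r), \qquad \sum_{a(F_j)} \omega_{i(k)} = 1 \quad (j=1,\ldots,r).
\]
This records a consistent distribution of angles both at every vertex and within every face of $\mathcal{G}$.

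The second step is to place each $\mathcal{G}^{*}$-vertex $v^{*}$ inside the corresponding face $F_{v^{*}}$ and draw the $\mathcal{G}^{*}$-edges through the refinement $\mathcal{G}\wedge \mathcal{G}^{*}$ in such a way that the angle of every $F^{*}_{v}$-sector at $v^{*}$ coincides with the angle already prescribed to the matching $F_{v^{*}}$-sector at $v$ in opposite position. Because a neighbourhood of $v^{*}$ in $T$ is homeomorphic to an open disk and the cyclic order of the $\mathcal{G}^{*}$-edges at $v^{*}$ is fixed by $\Pi^{*}$, such a local re-drawing is always possible and does not alter the combinatorial type of the embedding. This directly yields the conclusion that in each $\mathcal{G}\wedge\mathcal{G}^{*}$-face the angles at the $\mathcal{G}$- and $\mathcal{G}^{*}$-vertices are equal and positive, which is the content of the lemma.

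What remains is a consistency check, which will simultaneously re-obtain Lemma \ref{NL7.13}: the induced angles at the $\mathcal{G}^{*}$-side satisfy Conditions $A_{1}$--$A_{3}$. Positivity ($A_{1}$) is inherited. The angles at a $\mathcal{G}^{*}$-vertex $v^{*}$ are in bijection with the $F_{v^{*}}$-sectors along $\partial F_{v^{*}}$, so under the opposite-position pairing their sum equals $\sum_{a(F_{v^{*}})}\omega_{i(k)}=1$, giving $A_{2}$ for $\mathcal{G}^{*}$ from $A_{3}$ for $\mathcal{G}$; analogously, the angles in a $\mathcal{G}^{*}$-face $F^{*}_{v}$ correspond bijectively to the facial sectors at the $\mathcal{G}$-vertex $v$, so their sum equals $\sum_{a(v)}\omega_{i(k)}=1$, giving $A_{3}$ for $\mathcal{G}^{*}$ from $A_{2}$ for $\mathcal{G}$. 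The only non-routine input is the bijectivity of the opposite-position pairing between the $4r$ facial sectors of $\mathcal{G}$ and the $4r$ facial sectors of $\mathcal{G}^{*}$, but this is already supplied by the $\mathcal{G}\wedge\mathcal{G}^{*}$-construction of Subsection 3.2 (each such face is a quadruple with one Level-1 and one Level-3 vertex in opposite position), so no additional combinatorial work is needed. I expect the re-drawing step to be the only point that might look technical, but because it is purely local on a disk it is routine.
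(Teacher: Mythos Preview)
Your proposal is correct and follows essentially the same approach as the paper: the two-step local re-drawing (first around $\mathcal{G}$-vertices using $A_{1}$--$A_{3}$, then around $\mathcal{G}^{*}$-vertices using $A_{3}$) is exactly the argument the authors give in the paragraph preceding the lemma, with Lemma~\ref{NL7.14} stated as its by-product. Your added consistency check that the transferred angles satisfy $A_{1}$--$A_{3}$ for $\mathcal{G}^{*}$ (recovering Lemma~\ref{NL7.13}) and your explicit use of the bijection between facial sectors in opposite position merely make precise what the paper leaves to Fig.~\ref{Figure18}; there is no substantive difference in method.
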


\begin{figure}[h]
\begin{center}
\includegraphics[scale=0.6]{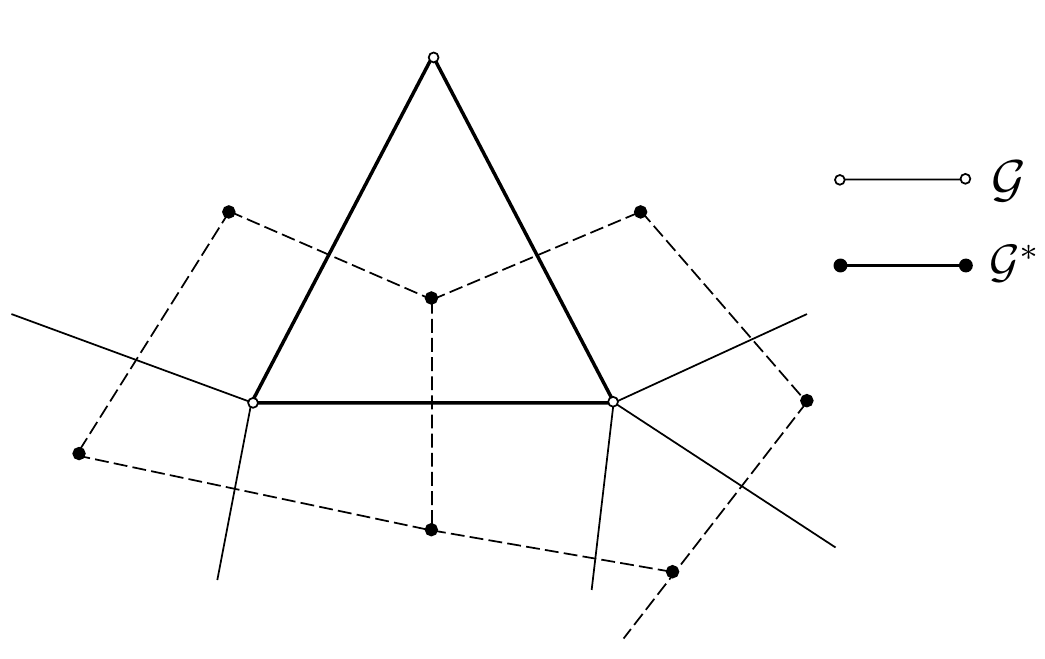}
\caption{\label{Figure18} ($\mathcal{G}$, $\Pi$) and its dual ($\mathcal{G}^{*}$, $\Pi^{*}$); partial} 
\end{center}
\end{figure}

\noindent
From now on we assume that a Newton graph and its dual are always oriented as $\mathcal{G}$ and $\mathcal{G}^{*}$ in Subsection 3.1.\\
\noindent
From Corollary \ref{C6.6N} and Lemma  \ref{L6.7N}
it follows:
\begin{corollary}
\label{NC7.15}
$\mathcal{G}(f) \text{ and } \mathcal{G}^{*}(f)
, f \in \tilde{E}_{r}$, are Newton graphs.
\end{corollary}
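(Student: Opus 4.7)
The plan is to verify each of the ingredients in Definition \ref{ND7.12} for $\mathcal{G}(f)$ by quoting the already-established lemmas of Section \ref{sec6}, and then deduce the statement for $\mathcal{G}^{*}(f)$ from the duality Lemma \ref{NL7.13}.

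First I would take care of the combinatorial/topological data. By Definition \ref{D6.1} the graph $\mathcal{G}(f)$ has the $r$ zeros of $f$ as vertices and the $2r$ unstable manifolds at the saddles of $\overline{\overline{\mathcal{N}}}(f)$ as edges (Lemma \ref{T1.3}(b) providing the count $2r$); its faces are the $r$ basins of repulsion $F_{b_{j}}(f)$, in agreement with $r-2r+r=0$ on the torus. That $\mathcal{G}(f)$ is a multigraph embedded in $T$ is Lemma \ref{L6.2N}, and that it is moreover connected and cellularly embedded is Lemma \ref{L6.8N}.

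Next I would verify the two defining properties. The \emph{E-property} is immediate from Lemma \ref{L6.7N}: for every face $F_{b_{j}}(f)$ the facial $\Pi$-walk $w_{b_{j}}$ traverses each edge of $\partial F_{b_{j}}(f)$ exactly once and visits every vertex of that boundary, hence it is Eulerian. For the \emph{A-property} I would take the actual anti-clockwise geometric angles between consecutive edges at each vertex and divide by $2\pi$, so as to match the normalization $2\pi\omega_{i(k)}$ of Definition \ref{ND7.6}. Condition $A_{1}$ is the strict positivity guaranteed by Lemma \ref{L6.5N} (in each canonical region the angle at the zero is well-defined, strictly positive, and equal to the angle at the corresponding pole). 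Condition $A_{2}$ is trivial: at each zero the local neighborhood is a topological disk, so the angles between consecutive unstable manifolds at that zero sum to $2\pi$, i.e.\ to $1$ after normalization. Condition $A_{3}$ — the only really geometric requirement — is precisely the content of Corollary \ref{C6.6N}: the (anti-clockwise) angles spanning the sectors of $F_{b_{j}}(f)$ at its boundary vertices are non-vanishing and sum to $2\pi$, i.e.\ to $1$ after normalization. With this all three items of Definition \ref{ND7.6} are met, so $\mathcal{G}(f)$ is a Newton graph of rank $r$.

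Finally, for $\mathcal{G}^{*}(f)$ there is nothing more to prove by hand: the counts $r$ vertices, $2r$ edges and $r$ faces are inherited via the duality construction of Subsection 3.1, and Lemma \ref{NL7.13} says that if $(\mathcal{G},\Pi)$ is a Newton graph then so is $(\mathcal{G}^{*},\Pi^{*})$. (If a direct argument were desired, one could instead observe that $\mathcal{G}^{*}(f)=-\mathcal{G}(\tfrac{1}{f})$ by the discussion following Theorem \ref{T6.10N}, and repeat the above reasoning for the structurally stable flow $\overline{\overline{\mathcal{N}}}(\tfrac{1}{f})$ using Lemma \ref{T1.3}(a).) No step presents a genuine obstacle; the corollary is essentially the packaging of Lemmas \ref{L6.2N}, \ref{L6.5N}, \ref{L6.7N}, \ref{L6.8N} and Corollary \ref{C6.6N} into the vocabulary of Definition \ref{ND7.12}.
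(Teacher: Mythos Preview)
Your proof is correct and follows essentially the same approach as the paper, which simply states that the result follows from Corollary~\ref{C6.6N} and Lemma~\ref{L6.7N}. You have spelled out in more detail the auxiliary requirements of Definition~\ref{ND7.12} (multigraph via Lemma~\ref{L6.2N}, connected cellular embedding via Lemma~\ref{L6.8N}, the vertex/edge/face counts), and you handle $\mathcal{G}^{*}(f)$ explicitly via Lemma~\ref{NL7.13}, but the core of the argument is identical.
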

In the forthcoming section
we prove that in a certain sense the reverse is also true.\\

\noindent
We end up this section with a lemma that we will use in the sequel:

\begin{lemma}
\label{NL7.16}
 Let $\mathcal{G}$
 be of order $r=2$ or 3. Then:
If $r=2$, the A-property always holds, whereas in Case $r=3$ the {\it E-property} implies the A-property.  
\end{lemma}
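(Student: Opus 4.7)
My plan is to derive both assertions from Corollary \ref{NC7.11}, which reduces the A-property to the Hall condition
\[
|J| < |V(\mathcal{G}(J))|, \quad \emptyset \neq J \subsetneq \{1,\ldots,r\}.
\]
Recall that $\mathcal{G}$ is, by standing assumption, a loopless multigraph (so every edge joins two distinct vertices), is connected, and has $r$ vertices together with $2r$ edges. Note also that the E-property is equivalent to $\mathcal{G}^{*}$ having no loops, i.e.\ to each edge of $\mathcal{G}$ lying on the boundaries of two \emph{different} faces (Corollary \ref{C6.3}).

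\textbf{Case $r=2$.} The only proper non-empty subset is $J=\{j\}$ with $|J|=1$. Since $\partial F_{j}$ contains at least one edge, and that edge has two distinct end-vertices (no loops), we get $|V(\mathcal{G}(J))|\geqslant 2>1=|J|$. Hence the H-condition holds, and Corollary \ref{NC7.11} yields the A-property without invoking the E-property at all.

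\textbf{Case $r=3$ (assuming E-property).} For $|J|=1$ the previous argument already gives $|V(\mathcal{G}(J))|\geqslant 2>1$. The only interesting case is $|J|=2$, say $J=\{j_{1},j_{2}\}$, where we must rule out the possibility that the two face-boundaries $\partial F_{j_{1}}$ and $\partial F_{j_{2}}$ together miss one of the three vertices. Suppose, for a contradiction, that some vertex $v\in V(\mathcal{G})$ lies in neither $V(\partial F_{j_{1}})$ nor $V(\partial F_{j_{2}})$. Since $\mathcal{G}$ is connected and has $r=3$ vertices, $v$ has at least one incident edge $e$. By the E-property (via Corollary \ref{C6.3}) the edge $e$ lies on the boundaries of \emph{two different} faces; but if either of these faces were $F_{j_{1}}$ or $F_{j_{2}}$, then $v$, being an end-vertex of $e$, would lie in that face-boundary, contrary to our assumption. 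Hence both faces carrying $e$ must equal $F_{j_{3}}$, contradicting the requirement that they be distinct. Therefore $|V(\mathcal{G}(J))|=3>2=|J|$, the H-condition holds for $|J|=2$ as well, and Corollary \ref{NC7.11} delivers the A-property.

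The only subtle step is the $r=3$, $|J|=2$ argument: it is precisely there that the E-property is indispensable, because the contradiction is extracted by forcing an edge to appear twice in the same facial walk (a loop in $\mathcal{G}^{*}$), which is exactly what the E-property forbids. Everything else reduces to the loopless-multigraph hypothesis on $\mathcal{G}$.
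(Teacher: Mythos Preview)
Your proof is correct and follows essentially the same strategy as the paper's: verify the Hall inequality $|J|<|V(\mathcal{G}(J))|$ for each proper non-empty $J$ and invoke the equivalence with the A-property. Two minor citation corrections: the equivalence you use is Lemma~\ref{NL7.10} (Corollary~\ref{NC7.11} is phrased via the transversal condition~(\ref{eq26}), not the inequality), and Corollary~\ref{C6.3} concerns $\mathcal{G}(f)$ rather than a general $\mathcal{G}$---the fact you need is the remark after Lemma~\ref{NL7.3}. Your $r=3$, $|J|=2$ argument is actually a bit more direct than the paper's, which routes through the interior/exterior decomposition of $\mathcal{G}(J)$ and $\mathcal{G}(J^{c})$ to reach the same conclusion $|V(\mathcal{G}(J))|=3$.
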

\begin{proof}
Let $J$ be an arbitrary {\it non empty, proper} subset of $\{ 1, \cdots ,r \}$. \\
\underline{Case $r=2$}: Note that $|J|=1$, thus 
$|V(\mathcal{G}(J)|>1$
(because $\mathcal{G}$ has no loops). So we have: $|V(\mathcal{G}(J)|>|J|$, i.e., the {\it H-condition} holds, and Lemma \ref{NL7.10} yields the assertion.\\
\underline{Case $r=3$}:\\
If $|J|=1$, then $|V(\mathcal{G}(J)|>1$(because $\mathcal{G}$ has no loops), thus $|V(\mathcal{G}(J)|>|J|$.\\
If $|J|=2$, then $|J^{c}|=1$ and $|V(\mathcal{G}(J^{c})|  \geqslant 2$ (since $\mathcal{G}$ has no loops). Moreover, by the {\it E-property}, each edge must be adjacent to at least two faces. This implies: Int$\mathcal{G}(J^{c})=\emptyset$. 
Thus $|{\rm Ext}\mathcal{G}(J)|=|{\rm Ext}\mathcal{G}(J^{c})|=|V(\mathcal{G}(J^{c})| \geqslant 2$  and $|V(\mathcal{G}(J)|=|{\rm Ext}\mathcal{G}(J)|+|{\rm Int}\mathcal{G}(J)|.$
Distinguish now between two cases: 
\begin{itemize}
\item ${\rm Int}\mathcal{G}(J) \neq \emptyset,$ then $|V(\mathcal{G}(J)|>|J|$.
\item ${\rm Int}\mathcal{G}(J) = \emptyset,$ then the {\it three} vertices of $\mathcal{G}$ must be exterior vertices for $\mathcal{G}(J)$, thus also $|V(\mathcal{G}(J)|>|J|$. Hence, $|V(\mathcal{G}(J)|>|J|$ holds for all $J$ under consideration, and Lemma \ref{NL7.10} yields the assertion.
\end{itemize}

\end{proof}

\begin{remark}
\label{NR7.17}
In contradistinction to the {\it A-property}, the {\it E-property} does not hold for all second order multigraphs $\mathcal{G}$
; compare Fig. \ref{Figure19} (i), where the dual $\mathcal{G}^{*}$
admits a loop. 
From Fig. \ref{Figure19} (ii), it follows that Lemma \ref{NL7.16} ($r=3$)
is not true in the case that $r=4$.
From Fig. \ref{Figure15} (ii) we learn that the {\it A-property} does not imply the {\it E-property}, even if $r=3$.
\end{remark}

\begin{figure}[h!]
\begin{center}
\includegraphics[scale=0.6]{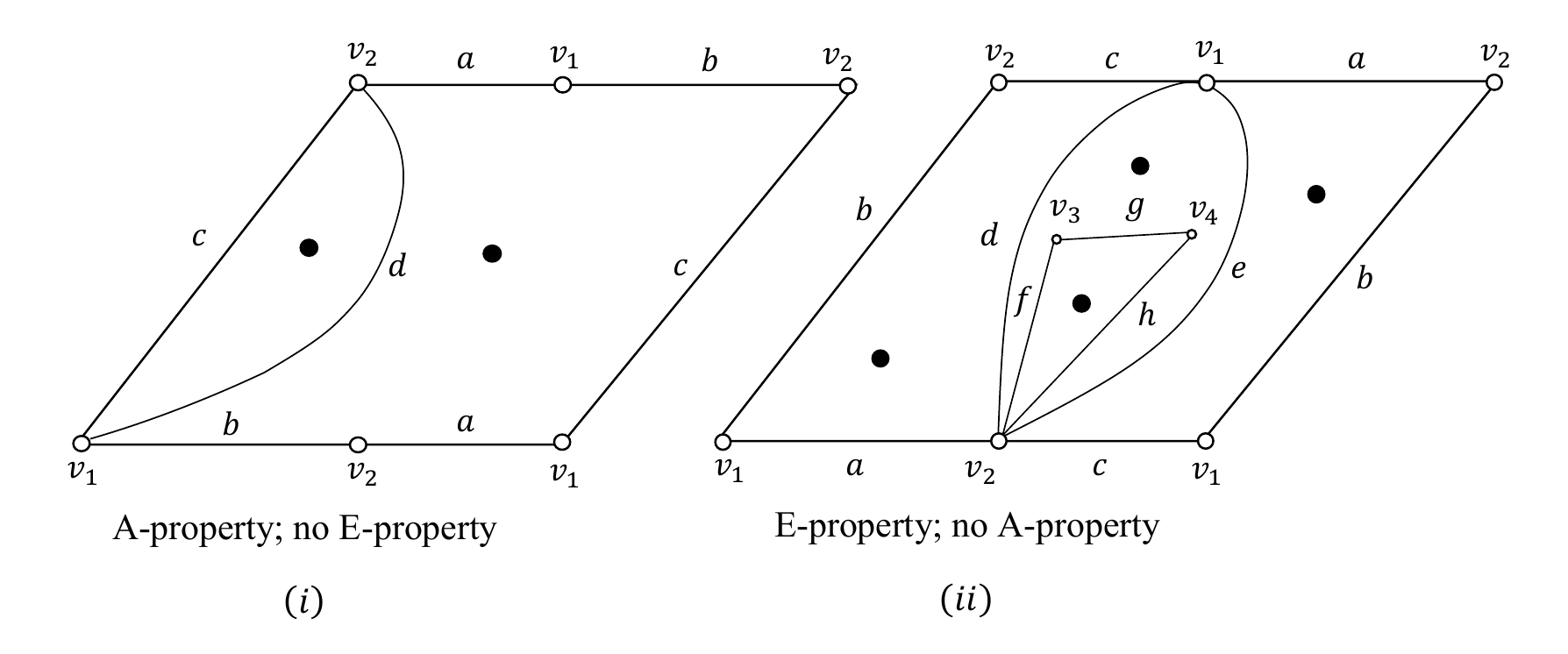}
\caption{\label{Figure19} Two graphs $\mathcal{G}$.}
\end{center}
\end{figure}

\newpage
\section{Structurally\,stable\,elliptic\,Newton\,flows:\,Representation}
\label{Nsec8}

In this section we prove that the reverse of Corollary \ref{NC7.15}  is also true. 
\begin{theorem}
\label{NT8.1}
Any Newton graph $\mathcal{G}$ of order $r$ 
can be realized - up to equivalency - as the graph $\mathcal{G}(f), f \in \tilde{E}_{r}$. 
\end{theorem}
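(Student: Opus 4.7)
My plan is to construct the elliptic function $f$ directly from the graph $\mathcal{G}$ by inverting the steady--stream / local--chart picture of Remark \ref{R6.20}. The idea is that, within each canonical region, the pair $(\overline{\overline{\mathcal{N}}}(f),\overline{\overline{\mathcal{N}}}^{\perp}(f))$ forms a polar net, so if the angular data of $\mathcal{G} \wedge \mathcal{G}^{*}$ is prescribed correctly--which is exactly what the A-property and Lemma \ref{NL7.14} buy--we can glue local $w$--plane charts into a complex torus structure on $T$ together with a meromorphic function whose desingularized Newton flow has $\mathcal{G}$ as its graph.

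First, I would fix an embedding of $\mathcal{G}$ (hence of $\mathcal{G}^{*}$) in $T$ in which, by Lemma \ref{NL7.14}, each face of $\mathcal{G}\wedge\mathcal{G}^{*}$ carries the same non-vanishing angle $2\pi\omega$ at its $\mathcal{G}$--vertex and at its dual $\mathcal{G}^{*}$--vertex. To each such face $R=R_{ij}(\sigma,\sigma')$ I would assign a sector $s_{i,j}(\sigma,\sigma')$ of opening $2\pi\omega$ in the $w=u+iv$--plane (apex at $0$), and a dual sector $S_{i,j}(\sigma,\sigma')$ of the same opening in the $W=U+iV$--plane, related by the inversion $W=1/w$. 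I would fix a homeomorphism $R\to s_{i,j}$ sending the $\mathcal{G}$--vertex to $0$, mapping the two unstable separatrices to the bounding rays, and the two stable separatrices to the bounding rays of the dual chart $S_{i,j}$.

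Second, I would glue these charts face to face: two canonical regions sharing an unstable separatrix give adjacent sectors in a common $w$--plane, two sharing a stable separatrix give adjacent sectors in a common $W$--plane, and the transition is always the biholomorphism $W=1/w$. Around a $\mathcal{G}$--vertex $v_{i}$, condition $A_{2}$ makes the face angles sum to $2\pi$, so the local sectors assemble into a full punctured $w$--disk with $v_{i}$ filling in as $w=0$; condition $A_{3}$ does the same around a $\mathcal{G}^{*}$--vertex in the $W$--plane. At a saddle, the four adjacent sectors combine into a punctured neighbourhood of total $w$--angle $4\pi$, so after the change of variable $\zeta^{2}=w$ the map extends analytically to the saddle as a simple critical point. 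The resulting object is a compact genus--one Riemann surface (topologically $T$) carrying a meromorphic function $\tilde f$ of degree $r$ with exactly the prescribed simple zeros at the $\mathcal{G}$--vertices, simple poles at the $\mathcal{G}^{*}$--vertices, and simple critical points at the saddles. By uniformization and Theorem \ref{T1.1}, I pass to the normalized torus $T(\Lambda^{*})$ and obtain $f^{*}\in\tilde{E}_{r}$; the Abel-type relation $\sum[a^{*}_{i}]=\sum[b^{*}_{j}]\bmod\Lambda^{*}$ demanded by Theorem \ref{T1.1} is automatic because $\tilde f$ was meromorphic on the torus by construction. Finally $\mathcal{G}(f^{*})\sim\mathcal{G}$ by construction, and Theorem \ref{T6.10N} then promotes this to a conjugacy of flows.

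The main obstacle is the gluing step: while the A- and E-properties supply the correct combinatorial counts and angle sums at every vertex type, one must verify that the local chart changes--in particular the branched transition $w=\zeta^{2}$ at the saddles--yield a genuine complex-analytic atlas rather than just a topological one. A secondary point is ensuring the conformal type of the resulting torus is arbitrary (hence adjustable to $\Lambda^{*}$), but this is exactly the content of the canonical form in Theorem \ref{T1.1}, which absorbs any ambiguity in the lattice.
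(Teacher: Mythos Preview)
Your approach is viable but takes a genuinely different route from the paper. The paper does not glue abstract sectors; instead it first realizes $\mathcal{G}$ as the distinguished graph of an actual $C^{1}$-structurally stable flow $\mathcal{X}(\mathcal{G})$ on the given torus $T$ (Peixoto), normalizes the local phase portraits via Grobman--Hartman so that all equilibria become star nodes or orthogonal saddles, and then invokes Smale's Theorem~B on gradient dynamical systems to produce a self-indexing Morse function $h$ with $\mathcal{X}(\mathcal{G})=\mathrm{grad}_{R}h$ for some Riemannian metric $R$. This $h$---together with a dual self-indexing function $g$ for $-\mathcal{X}(\mathcal{G})$---supplies an explicit radial coordinate $r(x)$ on every canonical region ($r=h$ inside the level $L_{1}$, $r=1/g$ outside), so the homeomorphisms onto the model sectors $s(\overline{\overline{\mathcal{R}}}_{ij})$ are concrete and automatically agree along shared separatrices; the complex atlas on the reduced torus $\check{T}$ and the meromorphic extension to $T$ then follow exactly as in Remark~\ref{R6.20}. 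By contrast, you bypass Peixoto and Smale entirely, glue model sectors directly from the combinatorics and the angle data of Lemma~\ref{NL7.14}, and finish with uniformization plus Theorem~\ref{T1.1}. Your route is shorter and uses no dynamical-systems machinery; the paper's route stays on the fixed torus throughout and never needs uniformization. The obstacle you correctly flag---that the chart transitions form a genuine holomorphic atlas, in particular at the saddles---is precisely what the paper's Morse function $h$ settles for free; in your setup you must pin down the radial parametrization along every separatrix by hand (e.g.\ place each saddle at $|w|=1$) and verify the monodromy around a saddle closes up, and your local uniformizer there should read $\zeta^{2}=w-w(\sigma)$ rather than $\zeta^{2}=w$.
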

\begin{proof}
Based on several steps, see the end of this section.
\end{proof}
Starting point is an arbitrary Newton graph $\mathcal{G}$. We apply the results of Section \ref{Nsec7}. Let $\mathcal{X}(\mathcal{G})$ be a $C^{1}$-structurally stable vector field on $T$ without limit cycles, determined - up to conjugacy
- by 
$\mathcal{G} \wedge  \mathcal{G}^{*}$,
thus by $\mathcal{G}$ as the ``distinguished graph'' of $\mathcal{X}(\mathcal{G})$.
(cf. Footnote \ref{7FN6})

  The flow $\mathcal{X}(\mathcal{G})$ is gradient like, i.e. up to conjugacy equal
to a gradient flow (with respect to a $C^{1}$-Riemannian
metric $R$ on $T$). This can be seen as follows:\\
  An arbitrary equilibrium, say {\bf x}, of the (structurally stable!) flow $\mathcal{X}(\mathcal{G})$ is of hyperbolic type, i.e. the derivative $D_{\text{{\bf x}}}\mathcal{X}(\mathcal{G})$ has eigenvalues $\lambda_{1}(\text{{\bf x}}), \lambda_{2}(\text{{\bf x}})$ with non-vanishing real parts, cf. \cite{Peix1}.
By the Theorem of Grobman-Hartman (cf. \cite{Hart}) we have: (use also Theorem 8.1.8, Remark 8.1.10 in \cite{JJT1}): On a suitable 
 {\bf y}-coordinate neighborhood [with {\bf y}=$(y_{1}, y_{2})^{T}$] of {\bf x}, the phase portrait of $\mathcal{X}(\mathcal{G})$ is conjugate
 with the phase portrait around  {\bf y}= {\bf 0}
 of one of the flows given by: 
$$
\text{{\bf y}}'=- \left(
\begin{matrix}
\lambda_{1}&0 \\
0&\lambda_{2}
\end{matrix}
\right)\text{{\bf y}},\; \text{{\bf y}}(\text{{\bf 0}})=\text{{\bf 0}}, \text{ with either }\lambda_{1}=\lambda_{2}=1, \text{ or } \lambda_{1}=\lambda_{2}=-1 \text{ or }\lambda_{1}=-\lambda_{2}=1,
$$
corresponding to the cases where {\bf y}= {\bf 0}
stands for respectively a {\it stable star node}, an {\it unstable star node} and an {\it orthogonal saddle}; see Comment on Fig.\ref{Figure1} and Fig.\ref{Figure8.1}.

\begin{figure}[h]
\begin{center}
\includegraphics[scale=0.76]{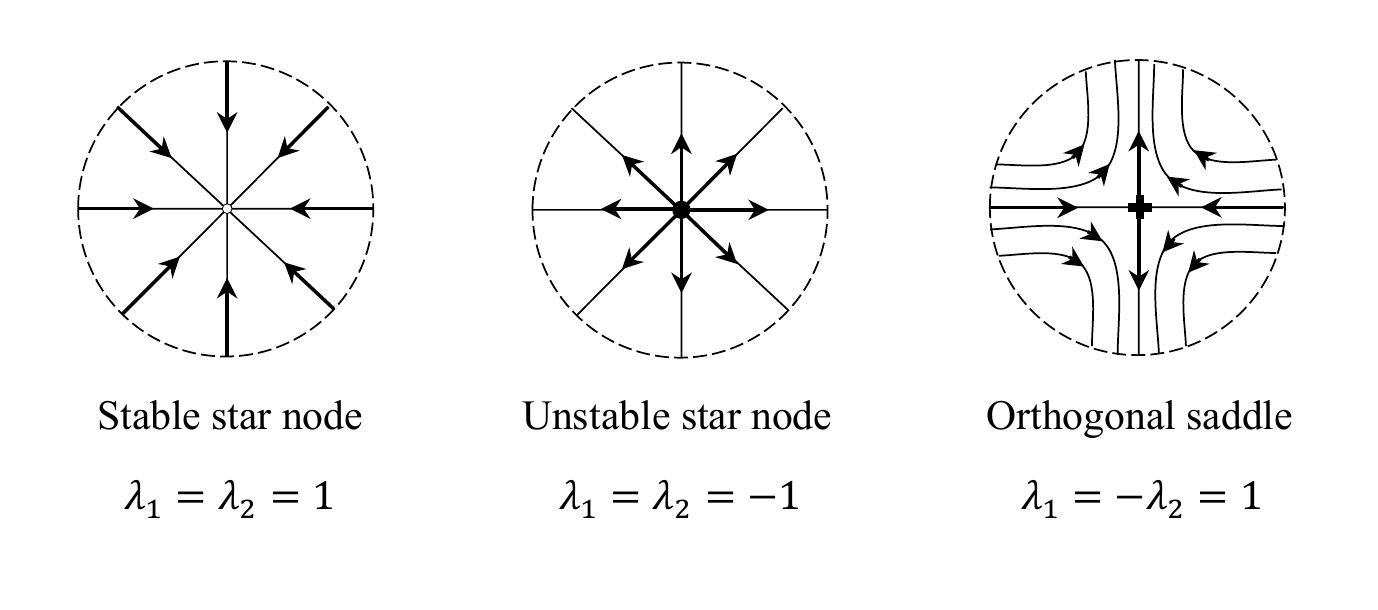}
\caption{\label{Figure8.1}The possible local phase portraits of $\mathcal{X}(\mathcal{G})$ around {\bf y}= {\bf 0}.}
\end{center}
\end{figure}

Applying a flow box argument (``cutting'' and ``pasting'' of local phase portraits), we find that $\mathcal{X}(\mathcal{G})$ is conjugate
  with a structurally stable  smooth flow on $T$  Ð again denoted by $\mathcal{X}(\mathcal{G})$ Ð  with as equilibria: 2$r$ star nodes ($r$ of them being stable, the other $r$ unstable) and 2$r$ orthogonal saddles.
  The underlying ``distinguished graph'' is denoted - again - by $\mathcal{G}$. It follows that the angle between two $\mathcal{G}$-edges (i.e. unstable separatrices at saddles for $\mathcal{X}(\mathcal{G})$ that are incident with the same $\mathcal{G}$-vertex (i.e. a stable star node for $\mathcal{X}(\mathcal{G})$), may assumed to be well-defined and nonvanishing. 
   
We adopt the notations\! \!/ \!\!conventions as introduced in the preambule to Definition \ref{ND7.6} ({\it Angle Property}).
In particular, let the $\mathcal{G}$-vertex $v_{i}$ stand for a stable node of $\mathcal{X}(\mathcal{G})$.
 In Fig. \ref{Figure8.2}-(a) we present a picture of $\mathcal{X}(\mathcal{G})$ w.r.t. the {\bf y}-coordinates around {\bf 0} ($=v_{i}$).
 Here the bold lines stand for $\mathcal{G}$-edges, and the thin lines for other $\mathcal{X}(\mathcal{G})$-trajectories on a small disk $D$ around {\bf y}= {\bf 0}. Note that the ÒanglesÓ $\omega_{i(k)}$ in this figure fulfil the conditions $A_{1}, A_{2}$  in Definition \ref{ND7.6}. 
In Fig. \ref{Figure8.2}-(b), we consider a similar configuration of $\mathcal{X}(\mathcal{G})$-trajectories on $D$, approaching $v_{i}$, with as only additional condition that the tuples $(e_{i(1)}, \cdots , e_{i(\delta_{i})})$ and $(e_{i(1)}', \cdots , e_{i(\delta_{i})}')$
are equally ordered.   
Consider the oriented arcs  arc($i(k), i(k+1)$) and $\text{arc}^{'}(i(k), i(k+1)$) in the boundary $\partial D$ of  $D$, determined by respectively the consecutive pairs $(e_{i(k)}, e_{i(k+1)})$ and $(e_{i(k)}', e_{i(k+1)}')$. 
Under suitable shrinking/stretching, these arcs can be identified. This yields an orientation preserving homeomorphism $\psi$ from $\partial D$ onto itself. It is easily proved that $\psi$ can be extended to a homeomorphism $\Psi : D \to D$
mapping the  $\mathcal{X}(\mathcal{G})$-trajectories in Fig. \ref{Figure8.2}-(a) onto those in Fig. \ref{Figure8.2}-(b). This procedure will be referred to as a local re-drawing (around $v_{i}$).

With the aid of local re-drawings, together with a ``cut'' and ``paste'' constructionÓ, the pair ($\mathcal{X}(\mathcal{G}), \mathcal{G}$)  can be changed into an equivalent structurally stable flow (again denoted $\mathcal{X}(\mathcal{G})$) and an equivalent ÒdistinguishedÓ graph (again denoted $\mathcal{G}$), with pictures as Fig. \ref{Figure8.2}-(b) instead of Fig. \ref{Figure8.2}-(a). We conclude that the angles $\omega_{i}(k)$ in Fig. \ref{Figure8.2}-(a) may be altered arbitrarily (provided that the Conditions $A_{1}, A_{2}$  in Definition \ref{ND7.6} persist) without changing the topological types of $\mathcal{X}(\mathcal{G})$ and $\mathcal{G}$.

\begin{figure}[h]
\begin{center}
\includegraphics[scale=0.8]{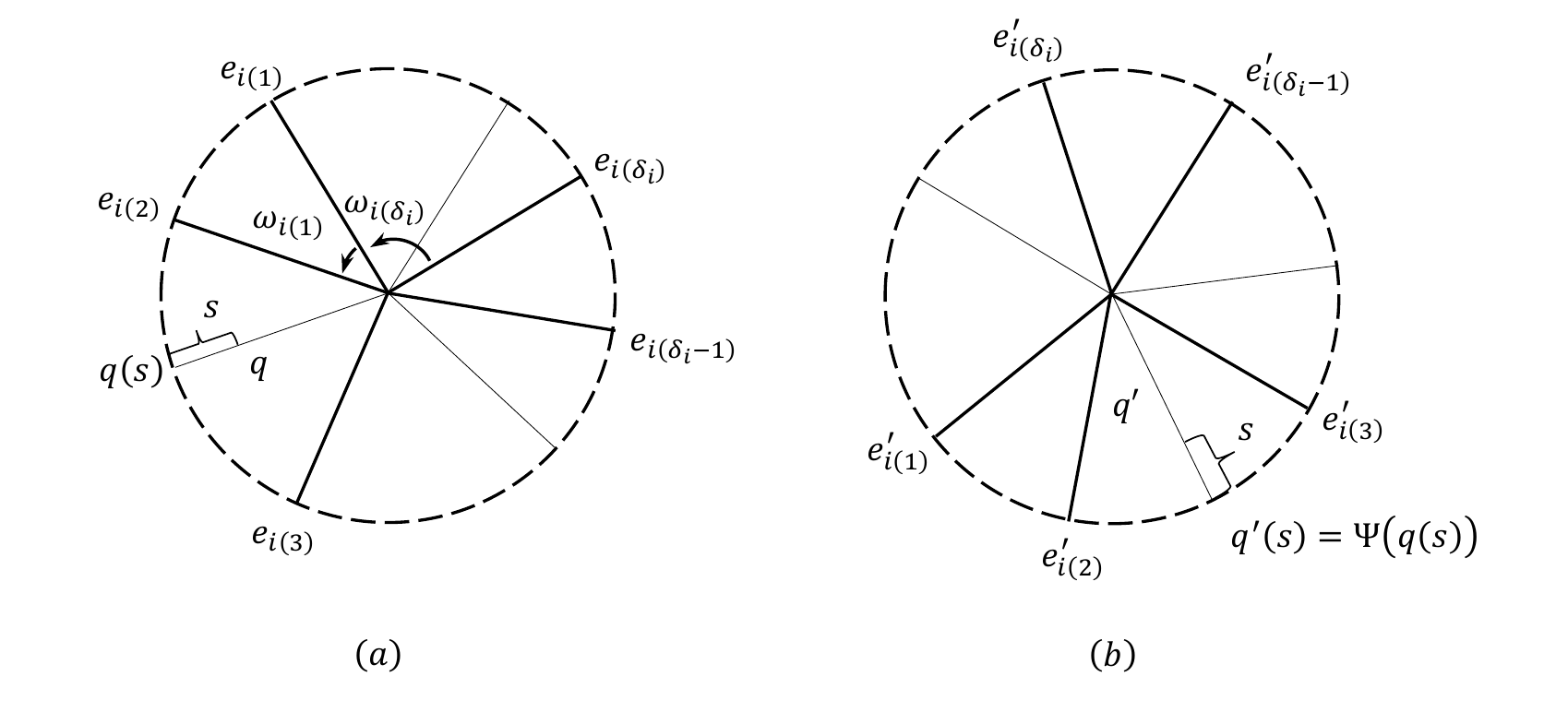}
\caption{\label{Figure8.2}Local phase portraits of $\mathcal{X}(\mathcal{G})$ around a stable star node before and after local redrawing.}
\end{center}
\end{figure}

Note that any toroidal graph, equivalent 
with a Newton graph (such as the original graph $\mathcal{G}$), is also a Newton graph (cf. Defintion \ref{ND7.2} and Lemma \ref{NL7.10}). Moreover, not only $\mathcal{G}$, but also $\mathcal{G}^{*}$ is Newtonian (cf. Lemma \ref{NL7.13}). Hence, compare (the proof of ) Lemma \ref{NL7.14}, with the aid of local re-drawings around the vertices of $\mathcal{G}$ and $\mathcal{G}^{*}$, together with a ``cut and past construction'', it is easily shown that: 
\begin{align*}
&\text{In each face of $\mathcal{G} \wedge \mathcal{G}^{*}$ (= canonical $\mathcal{X}(\mathcal{G})$-region), the angles at the $\mathcal{G}$- and $\mathcal{G}^{*}$-vertex}\\
&\text{(= a stable, respectively unstable, star node of $\mathcal{X}(\mathcal{G})$) are equal and non-vanishing.}
\end{align*}  
 With respect to the various local $\text{{\bf y}}$-coordinate systems around the $\mathcal{X}(\mathcal{G})$-equilibria 
, we define the $4r$ functions $h_{i}, h_{i}^{*},h_{j}^{**}, i=1, \cdots ,r, j=1, \cdots , 2r$, as follows:
\begin{align*}
&h_{i}(\text{{\bf y}})=\frac{1}{2}(y_{1}^{2}+y_{2}^{2}), \text{ in case of stable nodes at $\text{{\bf y}}=\text{{\bf 0}}$ representing the $r$ vertices $v_{i}$ of $\mathcal{G}$,}\\
&h_{i}^{*}(\text{{\bf y}})=-\frac{1}{2}(y_{1}^{2}+y_{2}^{2}),\text{in case of unstable nodes at $\text{{\bf y}}=\text{{\bf 0}}$ representing the $r$ vertices $v_{i}^{*}$ of $\mathcal{G}^{*}$,}\\
&h_{j}^{**}(\text{{\bf y}})=\frac{1}{2}(y_{1}^{2}-y_{2}^{2}) \text{ in case of saddles at $\text{{\bf y}}=\text{{\bf 0}}$ representing the $2r$ edges $e_{j}$ of $\mathcal{G}$.}
\end{align*}

Note that each function exhibits a non-degenerate critical point at $\text{{\bf y}}=\text{{\bf 0}}$. Moreover, on a $\text{{\bf y}}$-coordinate neighborhood $N$ around an equilibrium of $\mathcal{X}(\mathcal{G})$, the vector field $\mathcal{X}(\mathcal{G})$ is the negative gradient vector field [w.r.t. the standard Riemannian structure on $N$] of the associated function. 
  Apparently, the flow $\mathcal{X}(\mathcal{G})$, being structurally stable on $T$ (without limit cycles), together with the functions $h_{i}, h_{i}^{*}$  and $h_{j}^{**}$, fulfils the Requirements (1)-(4) laid upon Theorem B in \cite{Smale}. So, applying this theorem we may conclude that there is a 
 function $h$ on $T$ such that:
  \begin{enumerate}
\item[1.] 
The critical points of $h$ coincide with the equilibria of $\mathcal{X}(\mathcal{G})$ and $h$ coincides with the functions $h_{i}, h_{i}^{*}$, $h_{j}^{**}$ plus a constant in some neighborhood of each critical point.
\item[2.] $Dh(x)\cdot \mathcal{X}(\mathcal{G})|_{x} <0$ outside the critical point set Crit($h$) of $h$.
\item[3.] The function $h$ is {\it self indexing}, i.e., the value of $h$ in a critical point $\beta$ equals the Morse index of $\beta$ (= $\#$(negative eigenvalues of $D^{2}h (\beta )$). Thus: $h(\beta)= 0 (=2)$, in case of a stable (unstable) node and  $h(\beta)= 1$ in case of a saddle.
\end{enumerate}
As a corollary, we have (cf. Theorem 8.2.8 in \cite{JJT1}, and \cite{Smale}),
there is a variable (Riemannian) metric $R(\cdot )$ on $T$, such that:
$$
{\rm grad}_{R} h =\mathcal{X}(\mathcal{G}),
$$
where ${\rm grad}_{R} h$ is a vector field on $T$ of the form: (w.r.t. local coordinates {\bf x}
for $T$)
$$
{\rm grad}_{R} h(\text{{\bf x}})=-R^{-1}(\text{{\bf x}}) D^{T}h(\text{{\bf x}}).
$$
Here $R(\text{{\bf x}})$ is a symmetric, positive definite $2 \times 2$-matrix, with coefficients 
depending in a $C^{1}$-fashion on $\text{{\bf x}}$.
   Note that the direction of ${\rm grad}_{R} h $ is uniquely determined by the above transversality Condition 2., whereas on the neighborhoods $N$ around the $\mathcal{X}(\mathcal{G})$-equilibria, the matrices $R(\cdot )$ are just the $2 \times 2$-unit matrix $I_{2}$. Moreover, ${\rm grad}_{R} h(\text{{\bf x}}) \neq 0$,
if and only if $\text{{\bf x}}$ is outside the set Crit($h$) ($=$ set of $\mathcal{X}(\mathcal{G})$-equilibria). 
  
For $\text{{\bf x}} \notin $ Crit($h$), let ${\rm  grad}_{R(\text{{\bf x}})}^{\perp}h(\text{{\bf x}}) \neq 0$, be a vector $R$-orthogonal to ${\rm  grad}_{R(\text{{\bf x}})}h(\text{{\bf x}})$, i.e.
\begin{equation}
\label{Vgl30}
({\rm  grad}_{R(\text{{\bf x}})}^{\perp}h(\text{{\bf x}}))^{T}.R(\text{{\bf x}}).(-R^{-1}(\text{{\bf x}}). D^{T}h(\text{{\bf x}}))[=-Dh(\text{{\bf x}}). {\rm  grad}_{R(\text{{\bf x}})}^{\perp}h(\text{{\bf x}})]=0.
\end{equation}

Let $x_{0}$  be a point in the level set $L_{c}=\{ x \in T \mid h(x)=c; c=\text{ constant}\}$. Then we have
\begin{itemize}
\item Assume $x_{0} \notin$ Crit($h$), thus $L_{c}$ is, locally around $x_{0}$, a regular curve. By (\ref{Vgl30}) this local curve is $R$-orthogonal to the trajectory of $\mathcal{X}(\mathcal{G})$ ($={\rm grad}_{R}h$) through $x_{0}$. 
\item If $x_{0}\in$ Crit($h$), then $x_{0}$ is either an isolated point, ÒsurroundedÓ by closed regular curves $L_{c}, c \neq 0,2$ (in the case where $x_{0}$ is a $\mathcal{X}(\mathcal{G})$-node), or a Òramification pointÓ at the intersection of two (orthogonal) components of $L_{1}$ (in case of a $\mathcal{X}(\mathcal{G})$-saddle). This follows from the fact that on the neighborhoods $N$ around the equilibria of $\mathcal{X}(\mathcal{G})$, the Riemannian metric $R$ is just the standard one. 
\end{itemize}
 So, we may subdivide the level sets $L_{c}$ into the disjunct union of ÒmaximalÓ regular curves (to be referred to as to the Òlevel lines $L_{c}$Ò) and single points (in Crit($h$)). 
 Let $\text{{\bf x}}(t), \text{{\bf x}}(0) \notin $ Crit($h$) be a trajectory for $\mathcal{X}(\mathcal{G})$ ($={\rm grad}_{R}h$). Since $R^{-1}(\text{{\bf x}})$ is a symmetric, positive definite matrix:
 \begin{equation}
\label{Vgl31}
\frac{d}{dt}h(\text{{\bf x}}(t))|_{t=0}=Dh(\text{{\bf x}}(0)).\text{{\bf x}}'(0)=Dh(\text{{\bf x}}(0)).(-R^{-1}(\text{{\bf x}}(0))).D^{T}h(\text{{\bf x}}(0))) <0
\end{equation}
So, $h(\text{{\bf x}}(t))$ decreases when $t$ increases, and by the indexing Condition 3: $0 \leqslant h(x) \leqslant 2,$
for all $x \in T$.  
By (\ref{Vgl31}), when travelling along  the boundary of an open canonical $\mathcal{X}(\mathcal{G})$-region[=$\mathcal{G} \wedge  \mathcal{G}^{*}$-face], say $\overline{\overline{\mathcal{R}} }_{ij}$ in Fig. \ref{Figure8.3}, the functional values of $h$ vary strictly from 2 (at the unstable node $v_{j}^{*}$) via 1 (at a saddle $\sigma_{1}$ or $\sigma_{2}$) to 0 (at the stable node $v_{i}$). From this it follows -use also the transversality Condition 2 - that a level line $L_{c}$ , entering $\overline{\overline{\mathcal{R}} }_{ij}$  through the boundary $\partial \overline{\overline{\mathcal{R}} }_{ij}$  between $v_{i}$ and $\sigma_{1}$  [thus $0< c <1$], must leave this region through $\partial \overline{\overline{\mathcal{R}} }_{ij}$  between $v_{i}$ and $\sigma_{2}$ . Also: if $L_{c}$  enters $\overline{\overline{\mathcal{R}} }_{ij}$ through $\partial \overline{\overline{\mathcal{R}} }_{ij}$ between $v_{j}^{*}$ and $\sigma_{1}$ [thus $1< c <2$], then it leaves  $\overline{\overline{\mathcal{R}} }_{ij}$ through $\partial \overline{\overline{\mathcal{R}} }_{ij}$ between $v_{j}^{*}$ and $\sigma_{2}$. By the same argumentation: the saddles $\sigma_{1}$ and $\sigma_{2}$  are connected by a level line $L_{1}$. Considering unions of all $\mathcal{G} \wedge  \mathcal{G}^{*}$-faces incident with the same vertex representing a stable (unstable) attractor of $\mathcal{X}(\mathcal{G})$, we find: the level sets $L_{c}, c \neq 0, 1 \text{ or } 2,$ are closed smooth regular curves, either contractable to a stable attractor [in case $0<c<1$], or  to an unstable attractor [in case $1<c<2$] . 
  Altogether, a level line $L_{c}$ is either a closed curve, or it connects two different $\mathcal{X}(\mathcal{G})$-saddles.  Hence, the following definition makes sense: (compare also Definition \ref{D6.17N})

\begin{figure}[h]
\begin{center}
\includegraphics[scale=0.65]{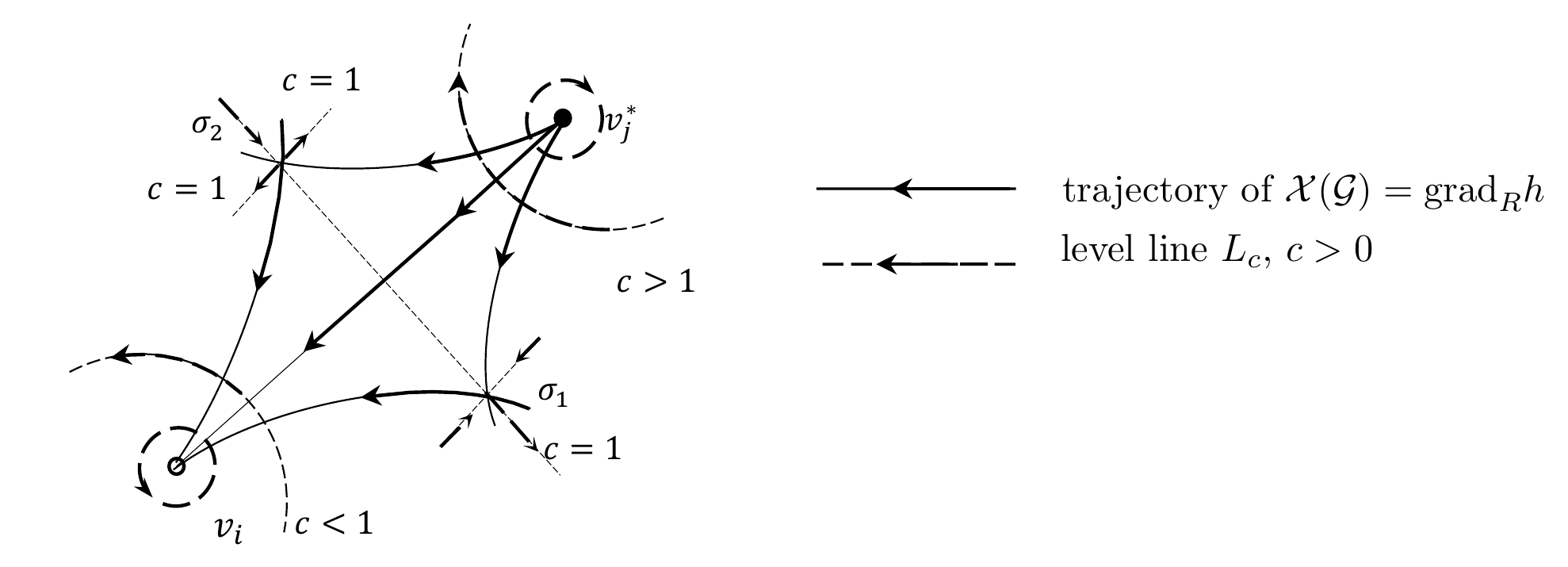}
\caption{\label{Figure8.3}The canonical $\mathcal{X}(\mathcal{G})$-region $\overline{\overline{\mathcal{R}} }_{ij}$. }
\end{center}
\end{figure}

 \begin{definition}
\label{ND8.2} The graph $\mathcal{G}^{\perp}$ on the torus $T$ is given by:
\begin{itemize}
\item Vertices are the $2r$ saddles for $\mathcal{X}(\mathcal{G})$ on $T$.
\item Edges are the $4r$ level lines $L_{1}$ connecting different saddles of $\mathcal{X}(\mathcal{G})$.
\end{itemize}
\end{definition}
 Apparently, $\mathcal{G}^{\perp}$ is cellularly embedded, and Ð by Euler's formula Ð  this graph is connected (since there are $2r$ faces, determined by the stable and unstable nodes of  $\mathcal{X}(\mathcal{G})$). So, the function $h$ admits the same value on (the edges and vertices of) $\mathcal{G}^{\perp}$, whereas Ð by the self indexing Condition 3 - we know that this value equals 1. Thus, the embedded graph $\mathcal{G}^{\perp}$ Ð as a point set in $T$ Ð is just the level set $L_{1}$. This leads to Fig. \ref{Figure8.4}, where we present the graphs $\mathcal{G}$, $\mathcal{G}^{*}$ and $\mathcal{G}^{\perp}$, together with some more level lines $L_{c}$. We endow the level lines $L_{c}, 0<c<1$ (the level lines $L_{c}, 1<c<2$ ) with the anti-clockwise (clock-wise) orientation. Doing so, we can turn $\mathcal{G}^{\perp}$ into a oriented graph; see Fig. \ref{Figure8.4}. 
 
We fix the vector field ${\rm  grad}_{R(\text{{\bf x}})}^{\perp}h(\text{{\bf x}})$ by demanding that it has the same length as ${\rm grad}_{R} h(\text{{\bf x}})$ (w.r.t. the norm, induced by $R(\text{{\bf x}})$) and is oriented according to the orientation of the level line $L_{c}$ through $\text{{\bf x}}$, see Fig. \ref{Figure8.3}. So, by (\ref{Vgl30}), we may interpret the net of $\mathcal{X}(\mathcal{G})$-trajectories and level lines $L_{c}$, as the $R$-orthogonal net of trajectories for the vector fields ${\rm grad}_{R}h$ and ${\rm  grad}_{R}^{\perp}h$. The switch from $\mathcal{X}(\mathcal{G})$ to $\mathcal{X}(\mathcal{G}^{*})$ (=$-\mathcal{X}(\mathcal{G})$) causes the reverse of the orientations in this net.
So, for the open canonical regions of $\mathcal{X}(\mathcal{G})$ and $\mathcal{X}(\mathcal{G}^{*})$, we have $\overline{\overline{\mathcal{R}} }_{ij}=\overline{\overline{\mathcal{R}} }_{ji}^{*}$ (as point sets). However, the role of $v_{i}$ and $v_{j}^{*}$, and of $\sigma_{1}$ and $\sigma_{2}$
(w.r.t. the orientations of the trajectories) is exchanged. see Fig. \ref{Figure28N}, where the equal angles at $v_{i}$ and $v_{j}^{*}$ in $\overline{\overline{\mathcal{R}} }_{ij}$ and $\overline{\overline{\mathcal{R}} }_{ji}^{*}$ are denoted by $\alpha$.\\
Reasoning as in the case of the function $h$ for $\mathcal{X}(\mathcal{G})$, we find a self indexing smooth function, say $g$, for $\mathcal{X}(\mathcal{G}^{*})$ with the following property:\\
\noindent 
``When traveling along  the boundary of $\overline{\overline{\mathcal{R}} }_{ji}^{*}$ 
, the functional values of $g$
vary strictly from 2 (at the unstable node $v_{i}$) via 1 (at a saddle $\sigma_{1}$ or $\sigma_{2}$) to 0 (at the stable node $v_{j}^{*}$).''

Consider an arbitrary $\mathcal{X}(\mathcal{G})$-trajectory, say $\gamma_{\Delta}$, in $\overline{\overline{\mathcal{R}} }_{ij}$, approachng $v_{i}$ under a positive angle $\Delta$ with the $\mathcal{G}$-edge (=$\mathcal{X}(\mathcal{G})$-trajectory) $v_{i}\sigma_{1}$; see Fig. \ref{Figure28N}. The set of all such trajectories is parametrized by the values of $\Delta$ in the interval ($0,\alpha$) and the functional values of $h$ (or $g$) on $\overline{\overline{\mathcal{R}} }_{ij}$. We map $\gamma_{\Delta}$ onto the half ray $r(x)\exp(i \Delta), x \in \gamma_{\Delta},$ where
\begin{align*}
&r(x)=h(x), \text{ if $x$ is on $\gamma_{\Delta}$ between $v_{i}$ and $p$ (=intersection $\gamma_{\Delta} \cap L_{1}$),}\\
&r(x)=\frac{1}{g(x)},  \text{ if $x$ is on $\gamma_{\Delta}$ between $p$ and $v_{j}^{*}$.}
\end{align*}
In this way, the $R$-orthogonal net of trajectories for $\mathcal{X}(\mathcal{G})$ (=${\rm grad}_{R}h$) and ${\rm  grad}_{R}^{\perp}h$ on 
$\overline{\overline{\mathcal{R}} }_{ij} 
$ can be homeomorphically mapped onto the polar net on the open sector , say $s(\overline{\overline{\mathcal{R}} }_{ij} )$,
in the complex plane as in Fig. \ref{Figure29N}-(a). Here
0 corresponds to $v_{i}$, and $\sigma_{1}^{'}$, $\sigma_{2}^{'}$ (both situated on the unit circle) are related to respectively $\sigma_{1}$ and $\sigma_{2}$.
 
Similarly, the trajectory $\gamma_{\Delta^{*}}^{*}$ in $\overline{\overline{\mathcal{R}} }_{ji}^{*}$ can be mapped onto the half ray
$\frac{1}{r(x)}\exp(i \Delta^{*})
, x \in \gamma_{\Delta^{*}}^{*},$ where $\Delta^{*}$ is the angle at $v_{j}^{*}$ between this trajectory and the $\mathcal{G}^{*}$-edge $v_{j}^{*}\sigma_{1}$, see Fig. \ref{Figure28N}, where $\Delta^{*}=\Delta$ (apart from orientation). Hence, the $R$-orthogonal net of trajectories for $\mathcal{X}(\mathcal{G}^{*})$ (=$-{\rm grad}_{R}h$) and $-{\rm  grad}_{R}^{\perp}h$ on 
$\overline{\overline{\mathcal{R}} }_{ji}^{*} 
$ can be homeomorphically mapped onto the polar net on the sector, obtained from $s(\overline{\overline{\mathcal{R}} }_{ij} )$ by reflection in the real axis. We call this sector $S(\overline{\overline{\mathcal{R}} }_{ji}^{*} )$. Here
0 corresponds to $v_{j}^{*}$, and $(\sigma_{1}^{*})'$, $(\sigma_{2}^{*})'$ (both situated on the unit circle) are related to respectively $\sigma_{1}$ and $\sigma_{2}$.
Reversing the orientations of the polar net in the latter section, we obtain a polar net, oriented as the 
$\mathcal{X}(\mathcal{G})$ (=${\rm grad}_{R}h$) and ${\rm  grad}_{R}^{\perp}h$-trajectories on 
$\overline{\overline{\mathcal{R}} }_{ij} 
$. 
Endowed with this polar net we rename $S(\overline{\overline{\mathcal{R}} }_{ji}^{*} )$ as $S(\overline{\overline{\mathcal{R}} }_{ij} )$. 
Apparently, the polar nets on $s(\overline{\overline{\mathcal{R}} }_{ij} )$ and $S(\overline{\overline{\mathcal{R}} }_{ij} )$ correspond under the inversion $z \to \frac{1}{z}$. Compare Fig. \ref{Figure29N}-(a),(b). In the same way, we map a neighbouring region $\overline{\overline{\mathcal{R}} }_{ij'}$  as in Fig. \ref{Figure28N}, homeomorphically onto the sector $s(\overline{\overline{\mathcal{R}} }_{ij'})$ in Fig. \ref{Figure29N}-(a) and also onto $S(\overline{\overline{\mathcal{R}} }_{ij'})$ in Fig. \ref{Figure29N}-(c). Repeating this procedure we are able to map all canonical regions of $\mathcal{X}(\mathcal{G})$ onto (the closures of) sectors of the types $s(\cdot)$ and $S(\cdot)$ in such a way that together they cover -for each value of $i$ and $j$ a copy of the complex plane. (Compare also Fig. \ref{Figure18N} and \ref{Figure19N}).

In analogy with Remark \ref{R6.20}, we consider the reduced torus $\check{T}= T\backslash \{\mathcal{G} \wedge \mathcal{G}^{*} \text{-vertices}\}$, and on $\check{T}$ the covering by open neighborhoods 
$$
\{ F^{*}_{v_{i}} \backslash v_{i}, F_{v^{*}_{j}} \backslash v^{*}_{j} \}, i,j=1. \cdots ,r,
$$
, where $F^{*}_{v_{i}}$ and $F_{v^{*}_{j}}$ stand for the basins of $\mathcal{X}(\mathcal{G})$ for respectively $v_{i}$ and $v^{*}_{j}$. Again, only intersections of the type $(F^{*}_{v_{i}} \backslash v_{i})\cap (F_{v^{*}_{j}} \backslash v^{*}_{j})$ are possibly non-empty. Even so, such an intersection consists of the disjoint union of regions of the type $\overline{\overline{\mathcal{R}} }_{ij}$, say $\overline{\overline{\mathcal{R}} }_{ij}^{1}, \cdots , \overline{\overline{\mathcal{R}} }_{ij}^{s}$, where $s$ is the amount of vertices $v_{i}$(vertices $v^{*}_{j}$) in the $\Pi$-walks of $F_{v^{*}_{j}}$  (of $F^{*}_{v_{i}}$). Note that at $v_{i}$, (resp. $v^{*}_{j}$) these regions $\overline{\overline{\mathcal{R}} }_{ij}^{k}$, are endowed with the anti-clockwise (clockwise) cyclic orientation, and are separated by regions not of this type; compare Remark \ref{R6.8N} and Subsection 3.2. 

Now, we proceed as in Remark \ref{R6.20}: The open covering of $\check{T}$ provides this manifold with a complex analytic structure, exhibiting coordinate transfomations  
$$
s(\overline{\overline{\mathcal{R}} }_{ij}^{k}) \leftrightarrow S(\overline{\overline{\mathcal{R}} }_{ij}^{k}), i,j=1. \cdots ,r,
$$
induced by the inversion $z \to \frac{1}{z}$. We pull back the restrictions of the function $z$ (resp. $\frac{1}{z}$) on the various sectors $s(\overline{\overline{\mathcal{R}} }_{ij}^{k})$, resp. $S(\overline{\overline{\mathcal{R}} }_{ij}^{k})$  to $\check{T}$. By glueing all canonical regions for $\mathcal{X}(\mathcal{G})$ along the trajectories in their common boundaries, we construct a complex analytic function on $\check{T}$. Continuous extension to $T$, yields a meromorphic function, say $f$ on $T$, with $r$ simple zeros (poles) at $v_{i}$ ($v_{j}^{*}$) and $2r$ simple saddles at $\sigma_{1}, \cdots ,\sigma_{2r}$. Since $\overline{\mathcal{N}}(z)=-z; \overline{\mathcal{N}}(\frac{1}{z})=-\frac{1}{z}$, we find $\mathcal{X}(\mathcal{G})=\overline{\overline{\mathcal{N}} }(f)$, thus $\mathcal{G}=\mathcal{G}(f)$. This proves Lemma \ref{NT8.1}.

  \begin{figure}[h!]
\begin{center}
\includegraphics[scale=0.6]{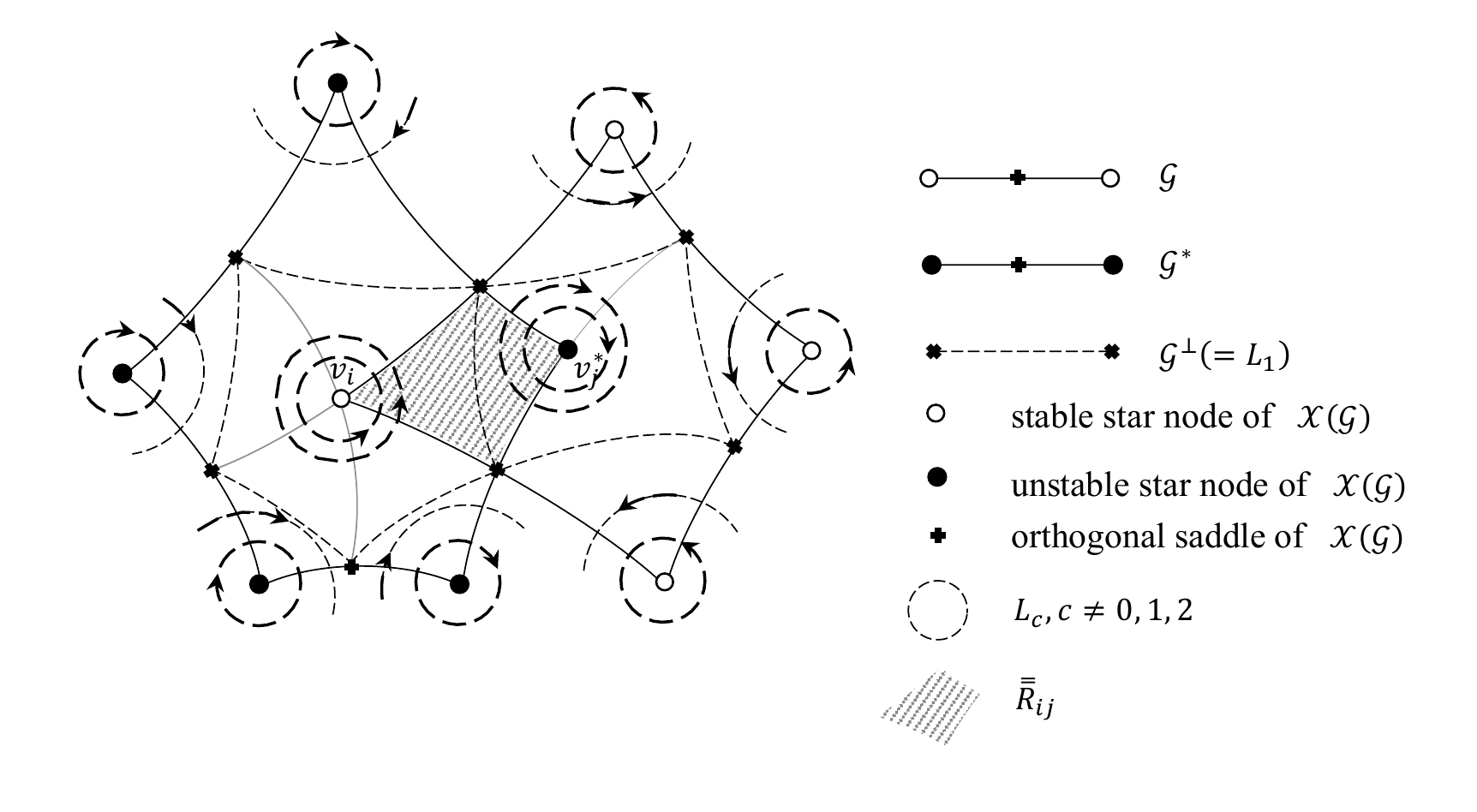}
\caption{\label{Figure8.4} The graphs $\mathcal{G} \wedge \mathcal{G}^{*}$, $\mathcal{G}^{\perp}$ and some level sets for $h$.}
\end{center}
\end{figure}

We combine this result together with results obtained in the preceding sections as follows:

\begin{theorem}
\label{NT8.2} $($Representation of structurally stable elliptic Newton flows by graphs.$)$\\
Up to conjugacy $(\sim)$ between flows and equivalency $(\sim)$ between graphs, the structurally stable Newton flows of $r$-th order 
are 1-1 represented by the Newton graphs of order $r$.
\end{theorem}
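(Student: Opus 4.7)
The plan is to package the preceding results into the desired bijection. Concretely, I would introduce the map
\[
\Phi : \tilde N_r / \sim \;\longrightarrow\; \{\text{Newton graphs of order } r\} / \sim,
\qquad [\overline{\overline{\mathcal{N}}}(f)] \mapsto [\mathcal{G}(f)],
\]
and verify in turn that $\Phi$ is (i) well defined, (ii) injective, (iii) surjective. Well-definedness and injectivity together are exactly the content of the classification Theorem \ref{T6.10N}: for $f,h\in\tilde E_r$ one has $\overline{\overline{\mathcal{N}}}(f)\sim \overline{\overline{\mathcal{N}}}(h)$ if and only if $\mathcal{G}(f)\sim \mathcal{G}(h)$ as oriented toroidal graphs. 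That the codomain is the correct one, i.e.\ that each $\mathcal{G}(f)$ is actually a Newton graph (cellularly embedded in $T$ with the $E$- and $A$-properties), is supplied by Corollary \ref{NC7.15}, which in turn rests on Corollary \ref{C6.6N} (Angle) and Lemma \ref{L6.7N} (Euler).

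For surjectivity I would invoke Theorem \ref{NT8.1}: given an arbitrary Newton graph $\mathcal{G}$ of order $r$, one constructs a $C^1$-structurally stable vector field $\mathcal{X}(\mathcal{G})$ on $T$ whose distinguished graph is (equivalent to) $\mathcal{G}$, realizes $\mathcal{X}(\mathcal{G})$ as the desingularized Newton flow of some $f\in\tilde E_r$ via Smale's theorem and the complex-analytic atlas on the reduced torus, and then reads off $\mathcal{G}(f)\sim \mathcal{G}$. Composed with the well-definedness step, this shows $\Phi$ hits every equivalence class of Newton graphs of order $r$.

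Putting the three pieces together gives the bijection asserted in Theorem \ref{NT8.2}. Since the heavy analytic input (Peixoto's classification, Smale's gradient realization, Stiemke/Hall for the $A$-property) has already been absorbed into Theorem \ref{T6.10N}, Corollary \ref{NC7.15} and Theorem \ref{NT8.1}, the only remaining obstacle is a bookkeeping one: checking that the ambient equivalences (conjugacy for flows, orientation-respecting graph isomorphism for Newton graphs) are exactly the equivalences under which these three results have been stated, so that no coarsening or refinement is introduced when assembling them. Once that compatibility is noted, the proof reduces to the three-line citation: well-definedness and injectivity from Theorem \ref{T6.10N}, range from Corollary \ref{NC7.15}, surjectivity from Theorem \ref{NT8.1}.
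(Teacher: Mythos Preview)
Your proposal is correct and matches the paper's own proof, which is the one-line citation ``Follows from Theorem~\ref{T6.10N}, Corollary~\ref{NC7.15} and Theorem~\ref{NT8.1}.'' You have merely made explicit the bijection $\Phi$ and the roles of the three cited results (well-definedness/injectivity, codomain, surjectivity), which is exactly the intended assembly.
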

\begin{proof}
Follows from  Theorem \ref{T6.10N}, Corollary \ref{NC7.15} and Theorem \ref{NT8.1}.
\end{proof}

\begin{figure}[h!]
\begin{center}
\includegraphics[scale=0.58]{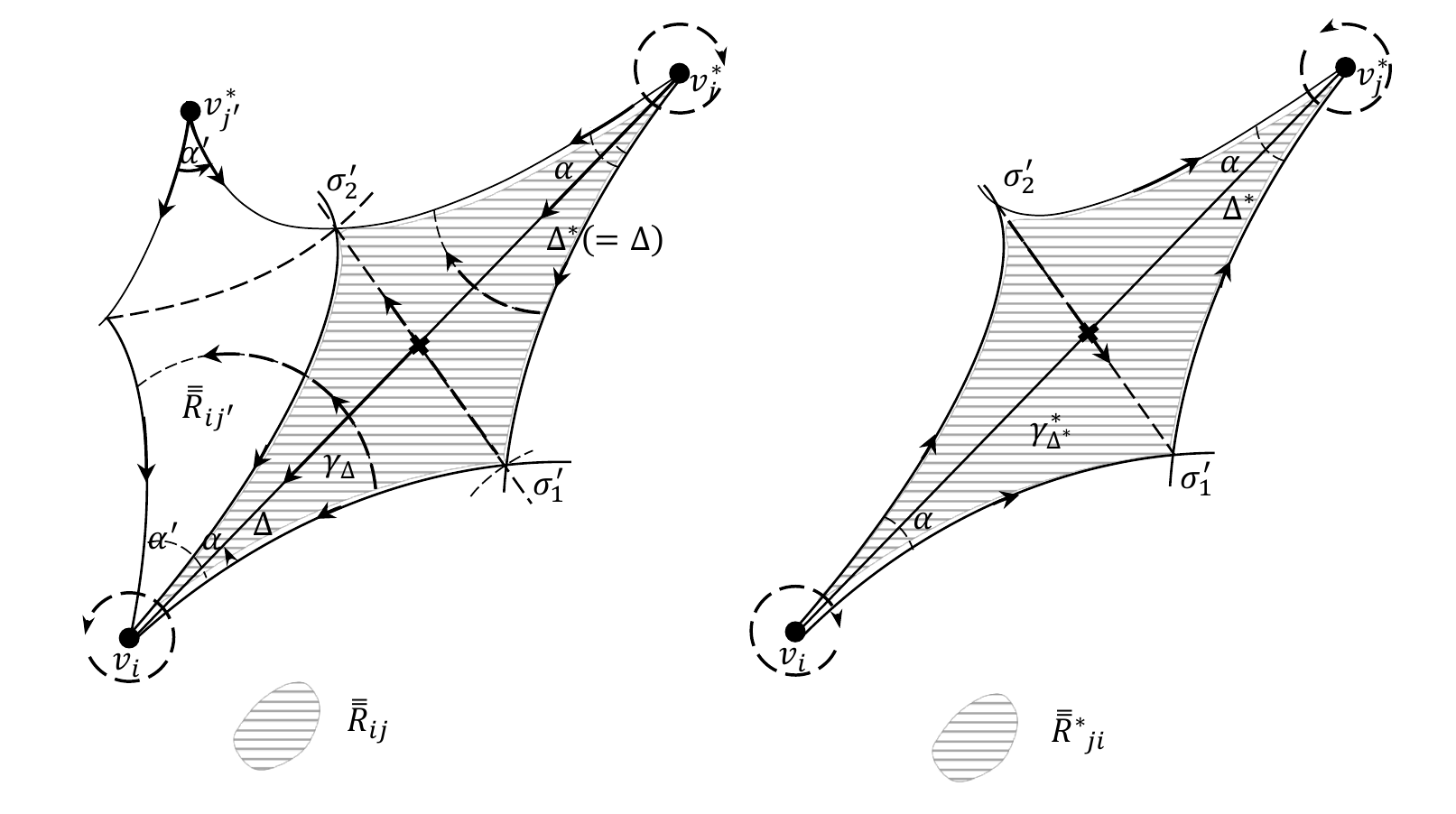}
\caption{\label{Figure28N}$\overline{\overline{\mathcal{R}} }_{ij}$ and $\overline{\overline{\mathcal{R}} }_{ij}^{*}$}
\end{center}
\end{figure}

\newpage

\begin{figure}[h!]
\begin{center}
\includegraphics[scale=0.50]{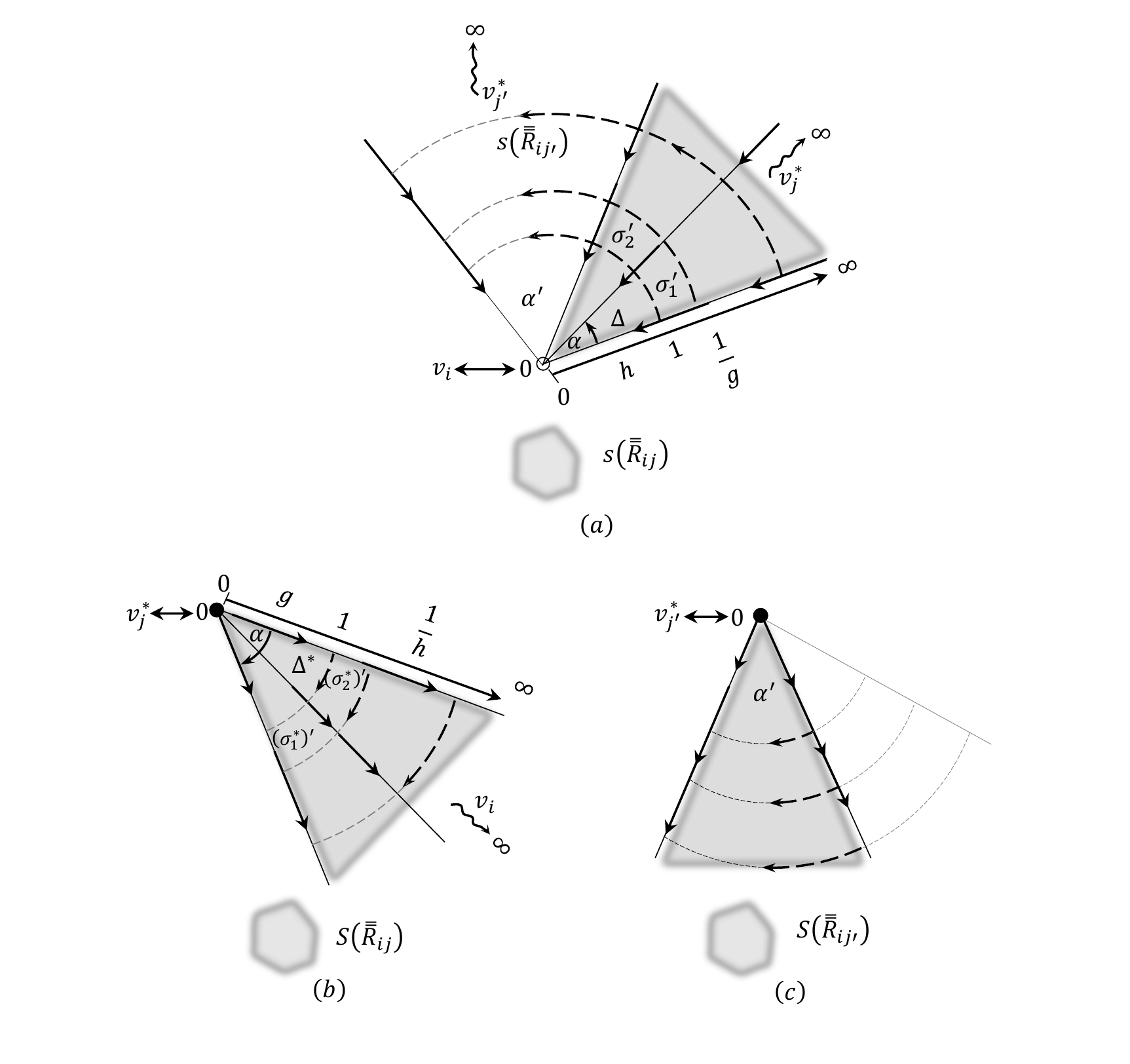}
\caption{\label{Figure29N}$s(\overline{\overline{\mathcal{R}} }_{ij})$, $S(\overline{\overline{\mathcal{R}} }_{ij})$ and $s(\overline{\overline{\mathcal{R}} }_{ij'})$, $S(\overline{\overline{\mathcal{R}} }_{ij'})$}
\end{center}
\end{figure}

\newpage

\section{Final remarks}
\label{sec13} 

\subsection{Rational versus elliptic Newton flows.}
\label{ss13.1}

Our study is inspired by the analogy between rational and elliptic functions. We raised the question whether, and -even so - to what extent, this analogy persists in terms of the corresponding Newton flows (on resp. the Riemann sphere $S^{2}$  and the torus $T$). An affirmative answer to this question is given by comparing the {\it characterization, genericity, classification and representation aspects} of rational Newton flows (see Theorem 2.1 in \cite{HT1}) with their counterparts as described in Theorem \ref{T1.2}, 
Theorem \ref{T6.10N} and Theorem  \ref{NT8.2}.

More in particular, this analogy becomes manifest when we look at  the special case of {\it balanced rational} Newton flows of order $r \geqslant 1$. By these, we mean structurally stable flows of the form  $\overline{\overline{\mathcal{N}}}(\frac{p_{n}}{q_{m}})$, with $p_{n} , q_{m}$ two co-prime polynomials of degrees respectively $n, m 
, |n-m| \leqslant 1,  r =\max \{n, m\}$. Such flows admit $2r$ star nodes ($r$ stable and $r$ unstable) together with $2r-2$ orthogonal saddles. [Note that at $z= \infty$ (north pole) there is an unstable node if $n=m+1$, a stable node if $m=n+1$, and a saddle if $m=n$]. Due to the duality property\footnote{\label{FTNT17}Duality for rational Newton flows is easily verified, see (\ref{vgl2x}).} 
, the transition 
$\frac{p_{n}}{q_{m}} \leftrightarrow \frac{q_{m}}{p_{n}}$
causes the reverse of orientations of the trajectories of $\overline{\overline{\mathcal{N}}}(\frac{p_{n}}{q_{m}})$ and $\overline{\overline{\mathcal{N}}}(\frac{q_{m}}{p_{n}})$. So, these flows maybe be considered as equal and we assume
$n \geqslant m$. Now, the oriented sphere graph $G(\frac{p_{n}}{q_{m}})$ for $\overline{\overline{\mathcal{N}}}(\frac{p_{n}}{q_{m}})$ can be defined (in strict analogy with Definition  \ref{D6.1}) as a connected, cellularly embedded multigraph with $r$ vertices, $2r-2$ edges and $r$ faces; apparently, also:   $G(\frac{q_{m}}{p_{n}}) =-G^{*}(\frac{p_{n}}{q_{m}})$
holds. As in the elliptic case, it can be proved  that $G(\frac{p_{n}}{q_{m}})$ fulfils both the {\it E-} and the {\it A-property}. (However, in this special case it is found that the later property already implies the first one). Subsequently, it is shown that any cellularly embedded  multigraph in $S^{2}$  with $r$ vertices, $2r-2$ edges and $r$ faces, admits the {\it A-property} iff certain (Hall) inequalities are satisfied. Altogether, this leads to a concept of Newton graph that is formally the same as the concept of Newton graph in Definition  \ref{ND7.12}. In particular, classification and representation results, similar to Theorem \ref{T6.10N} and Theorem  \ref{NT8.2}, are derived (cf. \cite{JJT3}, \cite{JJT4}).

We conclude that there is a striking analogy between the balanced Newton flows and elliptic Newton flows, both of order 
$r$.
(Note that an elliptic Newton flow of order 1 is not defined, whereas a balanced Newton flow of order 1 is just the North-South flow (cf. Fig. 7
and  8 in \cite{HT1} for $n=1$).

Finally, we note that - as in the elliptic case - for lower values of $r$ a list of all possible (up to conjugacy and duality) balanced Newton flows, represented by their graphs, is available. For example, see Fig.\ref{Figure57}, where the pictures of the graphs $G_{r}(\frac{p_{n}}{q_{m}})$ and $G_{r}^{*}(\frac{p_{n}}{q_{m}})$, $r=2,\;3$, suggest that the conditions A1, A2, A3, in Definition \ref{ND7.6} are indeed fulfilled. The proof that these graphs are the only possibilities, based on the Representation Theorem for rational Newton flows (compare \cite{JJT3}), is omitted.

\begin{figure}[htbp]
\centering
\includegraphics[width=5.5in]{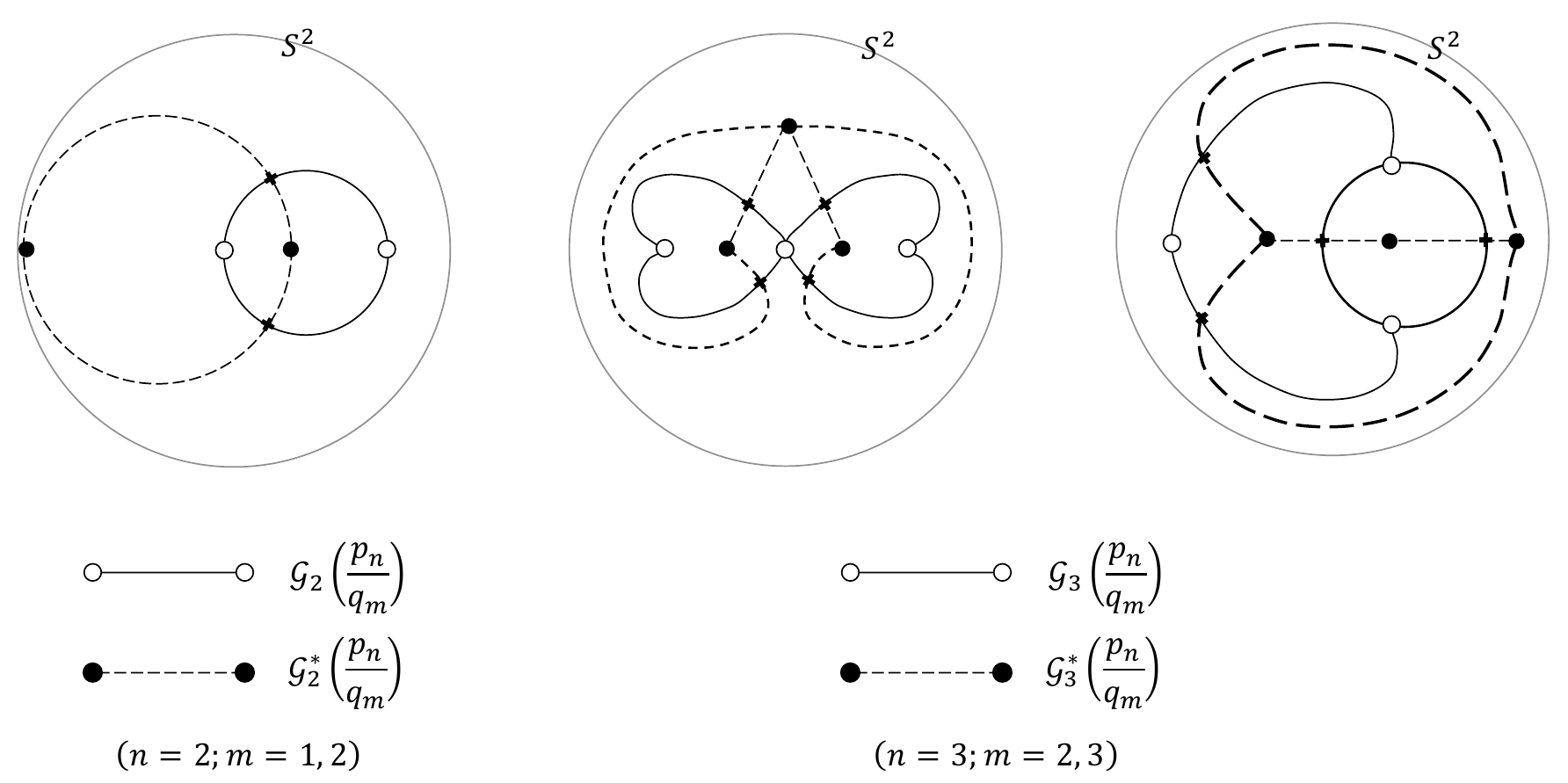}
\caption{The different graphs for balanced rational Newton flows of order $r=2, 3$.}
\label{Figure57}
\end{figure} 

\subsection{Complexity aspects}
\label{ss13.2}

We indicate the existence of a ``good'' (i.e., polynomial) algorithm deciding whether a given cellularly embedded torodial graph $\mathcal{G}_{r}$ is a Newton graph or not. To this aim, we check both the {\it E-} and {\it A-property}.\\

\noindent
\underline{{\it E-property}}: Use that the graphs (facial walks) $\partial F_{j}$ are Eulerian iff all vertices have even degree.\\
\noindent
\underline{{\it A-property}}: Let $B$ be a finite bipartite graph with bipartition $(X, Y)$, and denote for any subset $S$ in $X$ the neighbour set in $Y$ by $N(S)$. We consider the so called Strong Hall Property (cf. \cite{Coleman-E-G86}):
\begin{equation}
\label{vgl37}
|S| < |N(S)| \text{, for all nonempty } S \subset X.
\end{equation}
For each {\it bipartite} graph, obtained from $(B, X, Y)$ by adding one vertex ($p$) to $X$ and one edge
which joins $p$ to an $Y$-vertex, we also consider the Hall property (cf. \cite{Bondy-M76}:
\begin{equation}
\label{vgl38}
|\check{S}|  \leqslant |N(\check{S})| \text{, for all subsets } \check{S} \text{ of }X \cup \{p\}.
\end{equation}
It is easily shown that (\ref{vgl37}) and (\ref{vgl38}) are equivalent, and thus:  
Because the verification of (\ref{vgl38}) is possible in polynomial time (cf. \cite{Bondy-M76}), this is also true for (\ref{vgl37}).  
Now, we select an arbitrary $\mathcal{G}_{r}$-face $F_{j}$, say $F_{r}$ , and specify $(B, X, Y)$ by $X=\{F_{1}, \cdots , F_{r-1}\}$. $Y=V(\mathcal{G}_{r})$, where adjacency is defined by inclusion. The inequalities in the right hand side of Lemma \ref{NL7.10}
take the form (\ref{vgl37}) for all non-empty $J$ in $\{1, \cdots, r-1 \}$, and considering all possible choices for $F_{j}$, we are done.


\begin{thebibliography}{10}



\bibitem{A/S} Abramowitz, A., Stegun, I.A. (eds): 
{\em Handbook of Mathematical Functions.} 
\newblock {\em Dover Publ. Inc. (1965). } . 

\bibitem{ALGM} Andranov, A.A., Leontovich, E.A., Gordon, I.I., and Maier, A.G.:
{\em Theory of bifurcations of dynamical systems on a plane.}
\newblock {\em John Wiley, New York 1973.} 

	 
\bibitem{THBS} Twilt, F.; Helminck, G.F.; Snuverink, M.; Van den Burg, L.: {\em Newton flows for elliptic functions: a pilot study}, Optimization 57 (2008), no. 1, 113--134. 

 \bibitem{HT1} Helminck, G.F., Twilt, F.:
{\em Newton flows for elliptic functions I, Structural stability: characterization \& genericity}, submitted.
	 

\bibitem{HT3} Helminck, G.F., Twilt, F.:
{\em Newton flows for elliptic functions III, Classification of $3^{rd}$ order Newton graphs}, submitted.

 \bibitem{Bondy-M76}  Bondy, J.A. and  Murty, U.S.R.: {\em Graph Theory with Applications}, Macmillan \& Co, London, 1976[2].
 
 \bibitem{Coleman-E-G86}  Coleman, T.F., Edenbrandt, A. and Gilbert, J.R.: {\em Predicting Fill for Sparse Orthogonal Factorization}.  \newblock {\em Jour. Assoc. Comp. Machinery}, Vol.33, No 3, July 1986. pp. 517-532.

 \bibitem{Gib} Giblin, P.J.: {\em Graphs, surfaces and homology}, John Wiley and Sons (1977).
 
 
\bibitem{Har} Harary, F.: {\em Graph theory}, Addison-Wesley Publ. Co. Inc. (1969).
 
 \bibitem{Hart} Hartman, P. {\em Ordinary Differential Equations},  Birkh\"auser (1982).
 
\bibitem{Hir} Hirsch, M.W.: 
{\em Differential Topology.} 
\newblock {\em Springer Verlag (1976) } . 

\bibitem{JJT1} Jongen, H.Th., Jonker, P., Twilt, F.:
{\em Non linear Optimization in $\R^n$: Morse Theory, Chebyshev Approximation, Transversality, Flows, Parametric Aspects.} 
\newblock {\em Kluwer Ac. Publ., Dordrecht, Boston (2000)} . 

\bibitem{JJT2} Jongen, H.Th., Jonker, P., Twilt, F.:
{\em The continuous Newton method for meromorphic functions.} 
\newblock {\em In: Geometric Approaches to Differential Equations (R. Martini, ed), Lect. Notes in Math., Vol. 810, , pp. 181-239. Springer Verlag(1980)} . 

\bibitem{JJT3} Jongen, H.Th., Jonker, P., Twilt, F.: 
{\em The Continuous, desingularized Newton method for meromorphic functions.} 
\newblock {\em Acta Applicandae Mathematicae }13, Nos. 
	  1 and 2, pp. 81-121 (1988) . 

\bibitem{JJT4} Jongen, H.Th., Jonker, P., Twilt, F.: 
{\em On the classification of plane graphs 
	  representing structurally stable Rational Newton flows. } 
\newblock {\em Journal of 
	  Combinatorial Theory,} Series B, Vol. 51, No.2, pp. 256-270 (1991) . 

	  
\bibitem{Mang}	Mangasarian, O.I.: {\em Nonlinear programming}, McGraw-Hill Book Co. (1969).

\bibitem{M1} Markushevich, A.I.: 
{\em Theory of Functions of a Complex Variable, Vol. II, } 
\newblock {\em   Prentice Hall (1965)} . 

\bibitem{M2} Markushevich, A.I.: 
{\em Theory of Functions of a Complex Variable, Vol. III, } 
\newblock {\em   Prentice Hall (1967)} . 


\bibitem{Mirsky} Mirsky, L. {\em Transversal theory: an account of some aspects of combinatorial mathematics}
\newblock {\em  New York: Academic Press (1971)} .

\bibitem{MoTh} Mohar, B.;Thomassen, C. : {\em Graphs on surfaces}. John Hopkins Studies in the Mathematical Sciences. John Hopkins University Press, Baltimore, MD, 2001.


\bibitem{Peix1} Peixoto, M.M.: 
{\em  Structural stability on two-dimensional manifolds.} 
\newblock {\em Topology, } 1, pp. 101-120 (1962). 

\bibitem{Peix2} Peixoto, M.M.: 
{\em  On the classification of flows on 2-manifolds.} 
\newblock {\em In:Dynamical Systems, M.M. Peixoto, ed., pp. 389-419, Acad. Press, NewYork (1973)} . 


\bibitem{Smale}  Smale, S.: {\em On Gradient Dynamical Systems.}
\newblock {\em Ann. of Math.}, Second Series, Vol. 74, No. 1 (1961), pp. 199-206.


\end{thebibliography}
\end{document}